\documentclass[12pt]{amsart}
\usepackage{amsmath,amssymb,latexsym}
\usepackage{fullpage}
\usepackage{amscd}
\usepackage{enumitem}
\theoremstyle{plain}
\newtheorem{theorem}{Theorem}

\newtheorem{proposition}{Proposition}
\newtheorem{corollary}{Corollary}
\newtheorem{lemma}{Lemma}

\newtheorem{conjecture}{Conjecture}

\theoremstyle{remark}
\newtheorem{remark}{Remark}

\title{A new way integral for the $n$-fold cover of $SL_2$}
\author{Tianxiang Liu}

\begin{document}
\begin{abstract}
Let $\pi$ be a genuine irreducible cuspidal automorphic representation of the $n$-fold cover $SL_2^{(n)}(\mathbb{A})$ of $SL_2(\mathbb{A})$, $n \geq 3$ odd, and let $\tau$ be the genuine representation of $GL_1^{(n)}(\mathbb{A})$ obtained from a unitary Hecke character of $\mathbb{A}^\times$. We prove that, for a certain choice of data, a new way integral for $SL_2^{(n)}(\mathbb{A})$ represents the partial $L$-function $L^S(ns+\tfrac12,\pi\times\tau)$. The integral is derived from the generalized doubling integral of Cai, Friedberg, Ginzburg and Kaplan, following the idea of Ginzburg and Soudry.
\end{abstract}
\maketitle

\section{Introduction}\label{sec:intro}

Automorphic $L$-functions are among the central objects of number theory. One way to study them is through their integral representations. In the Rankin--Selberg method, a global integral of automorphic forms unfolds to a unique model such as the Whittaker model, and its uniqueness gives the Euler product. Piatetski-Shapiro and Rallis constructed two integrals that do not use Whittaker models. The doubling integral \cite{PS-R1} integrates two cusp forms of a classical group $G$ against an Eisenstein series on a larger group containing $G \times G$. It unfolds to a matrix coefficient and applies to every cuspidal automorphic representation of $G$. The new way integral \cite{PS-R2} integrates a cusp form on a symplectic group against a theta series and an Eisenstein series and unfolds to a non-unique model. Other new way integrals were constructed in \cite{B-F-G, P-S1, P-S2, Y, J-Y}. Cai, Friedberg, Ginzburg and Kaplan \cite{C-F-G-K1} generalized the doubling integral to represent the tensor product $L$-function $L(\pi \times \tau, s)$ of $\pi$ with a cuspidal representation $\tau$ of $GL_m$, by integrating against a Fourier coefficient of an Eisenstein series induced from a generalized Speh representation attached to $\tau$.

There are also works that generalize the above constructions to covering groups. These groups are number theoretical in nature since they are constructed from Hilbert symbols \cite{K-P}. The extension of the generalized doubling integral to covering groups is sketched in \cite{C-F-G-K2} and carried out in full detail for covers of symplectic groups by Kaplan \cite{K}. Bump and Friedberg \cite{B-F} presented a new new-way integral for a covering group. They showed that the Shimura integral of Bump and Hoffstein for the $n$-fold cover of $GL_r$ can be viewed as a new way integral, and proved the required identity for the generating function when $r=2,3$. Ginzburg \cite{G2} proved this identity for arbitrary $r$.

In this paper, we present a different new way integral for a covering group. Our results are the following: let $\pi$ be a genuine irreducible cuspidal automorphic representation of $SL_2^{(n)}(\mathbb{A})$, $n \geq 3$ odd, and let $\tau$ be a genuine automorphic representation of $GL_1^{(n)}(\mathbb{A})$ obtained from a unitary Hecke character of $\mathbb{A}^\times$. We prove that, for a certain choice of data, a new way type integral for $SL_2^{(n)}(\mathbb{A})$ represents the partial $L$-function $L^S(ns+\tfrac12,\pi\times\tau)$. Ginzburg and Soudry \cite{G-S2} derive new way integrals from the generalized doubling integrals on linear groups. We extend this approach to covering groups. Following them, we start with the Eisenstein series defined on $Sp_{4n}^{(n)}(\mathbb{A})$ in the doubling method of Kaplan \cite{K} for the pair $(\pi,\tau)$. We integrate its Fourier coefficient against a cusp form on one copy of $SL_2^{(n)}$ only, and take a Whittaker coefficient of the resulting function on the other copy. By root exchanges and the unfolding of a Fourier--Jacobi coefficient of the Eisenstein series, this integral is equal to a new way type integral, in which the cusp form is integrated against a theta series on the double cover of $SL_2$ and an Eisenstein series on the $2n$-fold cover of $Sp_{2n}$ (Theorem~\ref{Theorem}). By unfolding as in the doubling method, we also show that the function on the other copy is equal to $L^S(ns+\tfrac12,\pi\times\tau)$, divided by a normalizing factor, times a cusp form in an outer conjugate of $\overline{\pi}$ (Theorem~\ref{L:thm}). It follows that, for a certain choice of data, the new way type integral represents $L^S(ns+\tfrac12,\pi\times\tau)$ (Corollary~\ref{L:cor}). We conjecture that, for all decomposable data, the integral is Eulerian and represents the tensor product $L$-function $L^S(ns+\tfrac12,\pi\times\tau)$. Moreover, David Ginzburg has suggested that the approach presented here may be generalized by
replacing $SL_2$ by a higher rank symplectic group $Sp_{2r}$. Though higher rank will require additional
steps, this paper gives a proof of concept. This is another instance of the paradigm, proposed in \cite{F-G}, that constructions in the theory of automorphic forms should generalize to covers.

We now recall the constructions of \cite{C-F-G-K1} and \cite{G-S2} in more detail. In the construction of the generalized doubling integral, the Eisenstein series is induced from a generalized Speh representation $\Delta(\tau, c)$ on the Siegel parabolic subgroup. Let $\pi$ be an irreducible cuspidal automorphic representation of $Sp_{2k}(\mathbb{A})$ and let $\tau$ be an irreducible cuspidal automorphic representation of $GL_m(\mathbb{A})$. The generalized doubling integral for the pair $(\pi,\tau)$ integrates two cusp forms in the space of $\pi$ against a Fourier coefficient of the Eisenstein series $E(f_{\Delta(\tau,2k),s})$ on $Sp_{4km}(\mathbb{A})$, attached to a section $f_{\Delta(\tau,2k),s}$ of the representation induced from $\Delta(\tau,2k)|\det|^s$ on the Siegel parabolic subgroup. The Fourier coefficient is taken along the unipotent radical $U=U_{(2k)^{m-1},4km}$ of the standard parabolic subgroup whose Levi part is $GL_{2k}\times\dots\times GL_{2k}\times Sp_{4k}$, with respect to a character $\psi_U$. The two copies of $Sp_{2k}$ are embedded in $Sp_{4km}$ by a map $t$ which normalizes $U$ and fixes $\psi_U$. As an application, Cai, Friedberg and Kaplan \cite{C-F-K} used the generalized doubling integrals and the converse theorem to establish the weak functorial lift to $GL_N$ of an arbitrary irreducible cuspidal automorphic representation of a split classical group without any genericity assumption.

Ginzburg and Soudry \cite{G-S2} (see also \cite{G-S1}) derived new integrals from the generalized doubling integrals. They integrate the same Fourier coefficient against one cusp form instead. With $[H]$ denoting $H(F)\backslash H(\mathbb{A})$, they define, for a cusp form $\varphi_\pi$ in the space of $\pi$ and $g\in Sp_{2k}(\mathbb{A})$,
\begin{equation*}
\xi(\varphi_\pi,f_{\Delta(\tau,2k),s})(g)=\int\limits_{[Sp_{2k}]}\:\int\limits_{[U]}\varphi_\pi(h)\,E(f_{\Delta(\tau,2k),s})(u\,t(g,h))\,\psi_U^{-1}(u)\,du\,dh.
\end{equation*}
Theorem 1 of \cite{G-S2} states that, for a $K$-finite holomorphic section, $\xi(\varphi_\pi,f_{\Delta(\tau,2k),s})$ is meromorphic in $s$ and takes values in the space of an outer conjugate $\pi^{\iota}$ of $\pi$ by an element of order two. It is Eulerian and represents the same partial $L$-function as the generalized doubling integral, namely $L^S(\pi\times\tau,s+\tfrac12)$, after normalizing the Eisenstein series.

They propose that all known constructions of global integrals which represent the $L$-functions of the doubling method are derived from the function $\xi(\varphi_\pi,f_{\Delta(\tau,2k),s})$. As evidence, they show that a Fourier coefficient of the function $\xi$ recovers the new way integral of \cite{PS-R2}. For $k=m=2$, they consider the integral
\begin{equation}\label{intro:GS model}
\int\limits_{[\text{Mat}_2^0]}\xi(\varphi_\pi,f_{\Delta(\tau,4),s})\left(\begin{pmatrix} I_2&Z\\ &I_2\end{pmatrix}\right)\psi^{-1}(\text{tr}(TZ))\,dZ,
\end{equation}
where $\text{Mat}_2^0$ is the space of the matrices $Z$ with $\left(\begin{smallmatrix} I_2&Z\\ &I_2\end{smallmatrix}\right)\in Sp_4$, and $T=J_2T_0$ for a symmetric matrix $T_0\in GL_2(F)$. The integral \eqref{intro:GS model} expands to
\begin{equation}\label{intro:GS expanded}
\int\limits_{[\text{Mat}_2^0]}\:\int\limits_{[Sp_4]}\:\int\limits_{[U_{4,16}]}
\varphi_\pi(h)\,E(f_{\Delta(\tau,4),s})\left(u\; t\left(\begin{pmatrix} I_2&Z\\ &I_2\end{pmatrix},h\right)\right)
\psi^{-1}_{U_{4,16}}(u)\,\psi^{-1}(\text{tr}(TZ))\,du\,dh\,dZ.
\end{equation}
They prove that, for nontrivial choices of the sections involved, \eqref{intro:GS expanded} is equal to the new way integral
\begin{equation}\label{intro:GS newway}
\int\limits_{[Sp_4]}\:\int\limits_{[U_{2,8}]}\varphi_\pi(h)\,
\theta^{\phi}_{\psi,8}(l_T(v)i_T(1,h))\,E(f^{\prime}_{\Delta(\tau\otimes\chi_T,2),s})(v(1,h))\,dv\,dh.
\end{equation}

We carry the above results from \cite{G-S2} to the $n$-fold cover of $SL_2$. Fix an odd integer $n \geq 3$ and a number field $F$ containing the group $\mu_{2n}$ of $2n$-th roots of unity. Let $SL_2^{(n)}(\mathbb{A})$ denote the $n$-fold cover of $SL_2(\mathbb{A})$. Let $\pi$ and $\tau$ be as above, let $\overline{\pi}$ be the complex conjugate of $\pi$, and let $\varphi^{(n)}$ be a cusp form in the space of $\overline{\pi}$. Let $E^{(n)}(f_s)$ be the Eisenstein series on the $n$-fold cover $Sp^{(n)}_{4n}(\mathbb{A})$ attached to a section $f_s$ of the representation induced from $\Delta(\Theta_{GL^{(n)}_n(\mathbb{A})}, 2)|\det|^s$ on the Siegel parabolic subgroup, where $\Delta(\Theta_{GL^{(n)}_n(\mathbb{A})}, 2)$ denotes the generalized Speh representation attached to two copies of the theta representation $\Theta_{GL^{(n)}_n(\mathbb{A})}$ of the Siegel Levi cover $GL^{(n)}_n(\mathbb{A})$. Both representations are constructed in \cite[\S 3]{K} from $\tau$.

As in \cite{G-S2}, let $\xi(\varphi^{(n)},f_s)$ be the function on $SL_2^{(n)}(\mathbb{A})$ obtained by integrating the Fourier coefficient of $E^{(n)}(f_s)$ along $U_{2^{n-1},4n}$ against the cusp form on one copy of $SL_2^{(n)}$ only (Section~\ref{L:statement}). In Theorem~\ref{L:thm}, we show that for decomposable data unramified outside a finite set of places $S$, $\xi(\varphi^{(n)},f_s)$ is equal to the partial $L$-function $L^S(ns+\tfrac12,\pi\times\tau)/d^S(s)$ times a vector whose components outside $S$ are spherical. Here $d^S(s)$ is a product of partial $L$-functions of $\tau$. The local factors are the unramified doubling integrals computed by Kaplan \cite[Theorem 66]{K}.

The object of study is the Whittaker coefficient of $\xi(\varphi^{(n)},f_s)$, which is the integral
\begin{equation*}
\int\limits_{[SL_2^{(n)}]}\:\int\limits_{[U_{2^{n-1},4n}]}\:\int\limits_{F\backslash\mathbb{A}} \varphi^{(n)}(\tilde{g})\,E^{(n)}(f_s)\left(u\: t\left(\begin{pmatrix} 1&y\\ &1\end{pmatrix}, \tilde{g}\right)\right)\psi^{-1}_{U_{2^{n-1},4n}}(u)\,\psi^{-1}(y)\,dy\,du\,d\tilde{g},
\end{equation*}
the analogue of \eqref{intro:GS expanded}. Here $t$ embeds $SL_2 \times SL_2$ in $Sp_{4n}$, with the cover carried by the second copy, and $\psi_{U_{2^{n-1},4n}}$ is an automorphic character of the unipotent group $U_{2^{n-1},4n}(\mathbb{A})$. All notation is defined in Sections~\ref{sec:prelim} and \ref{sec:main theorem}.

Our main result, Theorem~\ref{Theorem}, states that for suitable nontrivial choices of sections, this integral is equal to the new way type integral
\begin{equation*}
\int\limits_{[SL_2^{(n)}]}\: \int\limits_{[U_{1^{n-1}, 2n}]}\:
\varphi^{(n)}(\tilde{g})\, \theta^{\phi^\prime}_{2}(l^\prime(v)\,j(\tilde{g}))\,
E^{(2n)}(f^\prime_s)(v\,i(\tilde{g}))\,
\psi^{-1}_{\hat{U}_{1^{n-1}, 2n}}(v)\:dv\,d\tilde{g}.
\end{equation*}
In it, the cusp form is integrated against a theta series $\theta^{\phi^\prime}_{2}$ on the Jacobi group $\mathcal{H}_3(\mathbb{A})SL_2^{(2)}(\mathbb{A})$ of $SL_2$, attached to the character $\psi^\prime(t) = \psi(t/2)$, and an Eisenstein series $E^{(2n)}(f^\prime_s)$ on the $2n$-fold cover $Sp^{(2n)}_{2n}(\mathbb{A})$, induced from $\Theta_{GL_n^{(n)}(\mathbb{A})}\gamma_{\psi}|\det|^{s}$ on the Siegel parabolic subgroup, where $\gamma_\psi$ is the Weil factor attached to $\psi$. The maps $i$, $j$ and $l^\prime$ and the character $\psi_{\hat{U}_{1^{n-1}, 2n}}$ are defined in Section~\ref{sec:main theorem}.

As a corollary, for decomposable data unramified outside $S$, the new way type integral of Theorem~\ref{Theorem} is equal to $L^S(ns+\tfrac12,\pi\times\tau)/d^S(s)$ times a Whittaker coefficient of a vector in $\overline{\pi}^{\iota}$. As in \cite[Conjecture 1]{G-S2}, we expect the new way type integral to be Eulerian and to represent $L^S(ns+\tfrac12,\pi\times\tau)$ (Conjecture~\ref{L:conj}).

The proof of Theorem~\ref{Theorem} is divided into two steps. First we apply the root exchange lemma and Ikeda's method as in \cite{I} to obtain a Fourier--Jacobi coefficient of $E^{(n)}(f_s)$ and the theta series in the new way type integral above. This coefficient is then unfolded to an Eisenstein series on $Sp^{(2n)}_{2n}$. In the resulting integral, the cusp form is integrated against a theta series and an Eisenstein series, as in \cite{PS-R2} and \cite{G-S2}. To prove Theorem~\ref{L:thm}, we unfold $\xi(\varphi^{(n)},f_s)$ as in the generalized doubling method and \cite{G-S2}.

The main new difficulty is the following. The global cover $Sp^{(n)}_{4n}(\mathbb{A})$ is defined by a modified $2$-cocycle $\rho$, obtained from the block-compatible local cocycles of Banks--Levy--Sepanski \cite{B-L-S} and the splitting $\eta$ on the maximal compact subgroup. The modified cocycle is not block-compatible. Conjugation identities that are immediate in the linear case require proof for $\rho$. Section~\ref{sec:cocycle} proves the conjugation statement that the derivation needs. A second delicate point is the embedding of two copies of $SL_2^{(n)}$ in $Sp^{(n)}_{4n}(\mathbb{A})$. The integral of Theorem~\ref{Theorem} only requires the embedding of $U \times SL_2^{(n)}$, where $U$ is an upper triangular unipotent subgroup of the first copy. The function $\xi(\varphi^{(n)},f_s)$ requires both copies, and Section~\ref{L:copies} embeds them with the cocycle inverted on the left copy.

We now describe what is conjectural in the background theory of the generalized Speh representations on covers. For a general genuine cuspidal automorphic representation $\tau$ of a cover of $GL_k$, Kaplan's construction of the generalized Speh representations depends on two conjectures on an intertwining operator and on the partial $L$-function $L^S(s, \tau \times \tau^\vee)$ \cite[Conjectures 50, 51]{K}. Both are theorems in the linear case. In the present paper the inducing data is $\Delta(\Theta_{GL_n^{(n)}(\mathbb{A})}, 2)$, built from the theta representation. This is the case $k = 1$ of Kaplan's construction, and in this case he proves both conjectures \cite[Proposition 52]{K}.
Therefore, this paper does not depend on these conjectures.

The paper is organized as follows. In Section~\ref{sec:prelim}, we set up the preliminaries including the local and global covers of the symplectic groups and the modified cocycle $\rho$. Then we state the root exchange lemma, and recall the construction and the main property of the generalized Speh representations $\Delta(\Theta_{GL^{(n)}_n(\mathbb{A})}, 2)$. Section~\ref{sec:main theorem} defines the integral, states the main theorem, and performs the root exchanges, the conjugation, and the application of Ikeda's method following \cite[\S 4.2]{G-S2}. Section~\ref{sec:FJ descent} carries out the unfolding of the Fourier--Jacobi coefficient. Section~\ref{L:sec} defines $\xi(\varphi^{(n)},f_s)$ and proves the $L$-function identity of Theorem~\ref{L:thm}. Then we state Corollary~\ref{L:cor} and Conjecture~\ref{L:conj}. To complete the proof in Section 4, Section~\ref{sec:cocycle} contains the cocycle computations, and Section~\ref{sec:addacolumn} the proof of Lemma~\ref{lem:addacolumn}.

\section{Notations and Preliminaries}\label{sec:prelim}
% General notation about symplectic group and covers
\subsection{General Notations}
Fix an odd integer $n \geq 3$ throughout the paper. Let $F$ be a number field containing the group $\mu_{2n}$ of $2n$-th roots of unity, which we identify with the corresponding subgroup of $\mathbb{C}^\times$ by a fixed embedding. Let $\mathbb{A}$ denote its ring of Adeles. For any positive integer $k$, let $J_k$ denote the matrix with ones on the anti-diagonal and zeros elsewhere. The symplectic group $Sp_{2k}$ can be realized in $GL_{2k}$ by 
\begin{equation*}
    Sp_{2k} = \left\{ g \in GL_{2k} : g^{t} \begin{pmatrix}&J_k\\-J_k&\end{pmatrix} g = \begin{pmatrix}&J_k\\-J_k&\end{pmatrix} \right\}.
\end{equation*}

% conventions of unipotent U: one parameter subgroup
Let $e_{i,j}$ denote the matrix of size $2k$, which has one at the $(i,j)$ entry, and zeros elsewhere. Let 
$$
e_{i,j}'=e_{i,j}-e_{2k-j+1,2k-i+1},\quad  \text{if} \quad 1\le i,j\le k, 
$$
and
$$  
e_{i,j}'=e_{i,j}+e_{2k-j+1,2k-i+1},\quad  \text{if} \quad 1\le i\le k,\quad j>k.
$$

\subsection{Global Covering}
In this subsection, we define the global $n$-fold cover $Sp^{(n)}_{2k}(\mathbb{A})$ of $Sp_{2k}(\mathbb{A})$, following \cite[\S 2.2, \S 2.5]{K}. For each place $\nu$ of $F$, let $\sigma_\nu$ denote the block-compatible $\mu_n$-valued 2-cocycle on the local group $Sp_{2k}(F_\nu)$, obtained by restricting the 2-cocycle on $GL_{2k}(F_\nu)$ constructed in \cite[\S 3]{B-L-S}. The local covering group $Sp^{(n)}_{2k}(F_\nu)$ is realized as the set of pairs $(g,\epsilon)$, where $g \in Sp_{2k}(F_\nu)$, $\epsilon\in\mu_n$, with multiplication $(g,\epsilon)(g^\prime,\epsilon^\prime) = (gg^\prime, \sigma_\nu(g,g^\prime)\epsilon\epsilon^\prime)$.

The following elementary observation will be used repeatedly.
\begin{lemma}\label{lem:divisible}
Let $\nu$ be a finite place with $\nu \nmid n$, and let $G_0$ be a group generated by subgroups isomorphic to $(\mathcal{O}_\nu, +)$. Then every homomorphism $G_0 \rightarrow \mu_n$ is trivial. Therefore, any two splittings over $G_0$ of a $\mu_n$-cover coincide.
\end{lemma}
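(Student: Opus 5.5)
The plan is to reduce to a single copy of $\mathcal{O}_\nu$ and use that $\mathcal{O}_\nu$ is $n$-divisible when $\nu \nmid n$. Since $\nu\nmid n$, the integer $n$ is a unit in $\mathcal{O}_\nu$: its $\nu$-adic valuation is $0$ because $\nu$ does not lie above any prime dividing $n$. Hence multiplication by $n$ is a bijection of $(\mathcal{O}_\nu,+)$, and every $x\in\mathcal{O}_\nu$ can be written $x=ny$ with $y=n^{-1}x\in\mathcal{O}_\nu$.

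Let $\chi:G_0\to\mu_n$ be a homomorphism, and let $H\subset G_0$ be one of the generating subgroups, with an isomorphism $\iota:(\mathcal{O}_\nu,+)\to H$. For $x\in\mathcal{O}_\nu$ write $x=ny$ as above. Then
\[
\chi(\iota(x))=\chi(\iota(y))^n=1,
\]
since $\chi(\iota(y))\in\mu_n$. So $\chi$ is trivial on each generating subgroup. Since these subgroups generate $G_0$ and $\chi$ is a homomorphism, $\chi$ is trivial on $G_0$.

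For the second assertion, let $p:\widetilde{G}\to G$ be a $\mu_n$-cover, i.e. a central extension with kernel $\mu_n$, with $G_0\subset G$. Let $s_1,s_2:G_0\to\widetilde{G}$ be two splittings, i.e. homomorphic sections of $p$ over $G_0$. Define $c(g)=s_1(g)s_2(g)^{-1}$. Since $p(c(g))=1$, $c$ takes values in $\mu_n$. It is a homomorphism, because $\mu_n$ is central:
\[
c(gh)=s_1(g)s_1(h)s_2(h)^{-1}s_2(g)^{-1}=s_1(g)c(h)s_2(g)^{-1}=c(g)c(h).
\]
By the first part $c\equiv1$, so $s_1=s_2$.

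I expect no real obstacle here. The only points needing care are the divisibility of $\mathcal{O}_\nu$ by $n$, which is exactly where the hypothesis $\nu\nmid n$ enters, and the centrality of $\mu_n$, which is what makes $c$ a homomorphism.
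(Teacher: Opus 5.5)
Your proof is correct and follows the paper's argument. Because $n\in\mathcal{O}_\nu^\times$, the group $\mathcal{O}_\nu$ is $n$-divisible, so every homomorphism to $\mu_n$ is trivial on each generating subgroup and hence on $G_0$; two splittings then differ by a $\mu_n$-valued homomorphism, because $\mu_n$ is central. You just write out the details that the paper leaves implicit.
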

\begin{proof}
A homomorphism into the abelian group $\mu_n$ is determined by its restrictions to the generating subgroups, and the image of each generating subgroup is an $n$-divisible subgroup of $\mu_n$, hence trivial, since $(\mathcal{O}_\nu, +)$ is $n$-divisible when $\nu \nmid n$. Two splittings over $G_0$ differ by a homomorphism $G_0 \rightarrow \mu_n$.
\end{proof}

For finite $\nu$, let $K_\nu$ denote the hyperspecial maximal compact subgroup $Sp_{2k}(\mathcal{O}_\nu)$, and let $S_0$ denote the finite set of places of $F$ consisting of the archimedean places, the places dividing $2n$, the finite places ramified in $F/\mathbb{Q}$, and the finite places whose residue field has $3$ elements. The product $\prod_\nu \sigma_\nu$ is not defined on $Sp_{2k}(\mathbb{A})\times Sp_{2k}(\mathbb{A})$, since $\sigma_\nu$ is not trivial on $K_\nu \times K_\nu$ for $\nu \notin S_0$. Instead, for $\nu \notin S_0$, the local cover splits over $K_\nu$: there is a map $\eta_\nu : K_\nu \rightarrow \mu_n$ with $\eta_\nu(e) = 1$ such that $y \mapsto (y, \eta_\nu(y))$ is a group homomorphism \cite[\S 2.2]{K}, and this splitting is unique: two splittings differ by a homomorphism $K_\nu \rightarrow \mu_n$, which is trivial because $K_\nu$ is perfect. Extend $\eta_\nu$ to $Sp_{2k}(F_\nu)$ by setting $\eta_\nu(g) = 1$ for $g \notin K_\nu$, and set $\eta_\nu \equiv 1$ for $\nu \in S_0$. Define the modified cocycle
\begin{equation*}
    \rho_\nu(g, g^\prime) = \frac{\eta_\nu(g)\,\eta_\nu(g^\prime)}{\eta_\nu(gg^\prime)}\,\sigma_\nu(g, g^\prime), \qquad g, g^\prime \in Sp_{2k}(F_\nu).
\end{equation*}
Each $\rho_\nu$ is cohomologous to $\sigma_\nu$, and is trivial on $K_\nu \times K_\nu$ for $\nu \notin S_0$. Since any $g, g^\prime \in Sp_{2k}(\mathbb{A})$ satisfy $g_\nu, g^\prime_\nu \in K_\nu$ for almost all $\nu$, the product
\begin{equation*}
    \rho(g, g^\prime) = \prod_\nu \rho_\nu(g_\nu, g^\prime_\nu)
\end{equation*}
is a well-defined $\mu_n$-valued 2-cocycle on $Sp_{2k}(\mathbb{A})$. The global cover $Sp^{(n)}_{2k}(\mathbb{A})$ is the group of pairs $(g,\epsilon)$, where $g \in Sp_{2k}(\mathbb{A})$, $\epsilon \in \mu_n$, with multiplication $(g,\epsilon)(g^\prime,\epsilon^\prime) = (gg^\prime, \rho(g,g^\prime)\epsilon\epsilon^\prime)$.

We record two properties of $\rho$. First, $\rho(u, u^\prime) = 1$ for $u, u^\prime \in N(\mathbb{A})$, where $N$ denotes the upper triangular maximal unipotent subgroup of $Sp_{2k}$, so $u \mapsto (u, 1)$ realizes $N(\mathbb{A})$ as a subgroup of $Sp^{(n)}_{2k}(\mathbb{A})$. In fact, $\sigma_\nu$ is trivial on $N(F_\nu) \times N(F_\nu)$ for every $\nu \notin S_0$ \cite{B-L-S}, so it suffices to show that $\eta_\nu$ is trivial on $N(F_\nu)$. By the definition of $\eta_\nu$, the only case to check is $\nu \notin S_0$ and $u \in N(F_\nu) \cap K_\nu = N(\mathcal{O}_\nu)$. Since $\sigma_\nu$ is trivial on $N(F_\nu)$, both $u \mapsto (u, 1)$ and $u \mapsto (u, \eta_\nu(u))$ are group homomorphisms on $N(\mathcal{O}_\nu)$, hence so is their ratio $\eta_\nu : N(\mathcal{O}_\nu) \rightarrow \mu_n$, which is trivial by Lemma~\ref{lem:divisible}. Second, by the product formula for the $n$-th order Hilbert symbol, $Sp^{(n)}_{2k}(\mathbb{A})$ splits over the rational points: $g \mapsto (g, \eta^{-1}(g))$ is a group homomorphism on $Sp_{2k}(F)$, where $\eta = \prod_\nu \eta_\nu$ \cite[\S 2.5]{K}. We remark that, unlike the local cocycles $\sigma_\nu$, the modified cocycle $\rho$ is not block-compatible. 
% Let $Sp^{(n)}_{2k}(\mathbb{A})$ denote the metaplectic n-fold cover of $Sp_{2k}(\mathbb{A})$. This group consists of pairs $(g, \epsilon)$ where $g\in Sp_{2k}(\mathbb{A})$ and $\epsilon \in \mu_n$. To define this global covering group, we follow \cite{T}. For each place $\nu$ of $F$, let $\sigma_\nu$ denote the block-compatible 2-cocycle on the local group $Sp_{2k}(F_\nu)$ defined by \cite{B-L-S}. Define the adelic 2-cocycle $\sigma$ by 
% \begin{equation*}
%     \sigma (g, g') = \prod\limits_\nu \sigma_\nu(g_\nu, g'_\nu),
% \end{equation*}
% for $g, g' \in Sp_{2k}(\mathbb{A})$. The group structure is defined by
% \begin{equation*}
%     (g,\epsilon)\cdot (g', \epsilon') = (gg', \sigma(g,g')\epsilon \epsilon'),
% \end{equation*}
% for $g, g' \in Sp_{2k}(\mathbb{A})$, $\epsilon, \epsilon' \in \mu_n$.

% Unipotent subgroups and embedding of G times G in H
Since $\rho$ is trivial on $N(\mathbb{A})$, for any upper triangular unipotent subgroup $U$ of $Sp_{2k}$, the map $u \mapsto (u, 1)$ realizes $U(\mathbb{A})$ as a subgroup of $Sp^{(n)}_{2k}(\mathbb{A})$. Since $\mu_n$ is central, $Sp_{2k}(\mathbb{A})$ acts on $Sp^{(n)}_{2k}(\mathbb{A})$ by conjugation: for $x, g \in Sp_{2k}(\mathbb{A})$ and any preimage $\widetilde{x}$ of $x$,
\begin{equation*}
    \widetilde{x}\,(g,\epsilon)\,\widetilde{x}^{-1} = \bigl(xgx^{-1},\ \rho(x,g)\,\rho(xgx^{-1},x)^{-1}\,\epsilon\bigr),
\end{equation*}
independently of the lift $\widetilde{x}$. These two facts will be used implicitly throughout this paper.

For any positive integers $a, b$ such that $ab \leq k$, let $U_{a^b, 2k}$ denote the unipotent radical of the parabolic subgroup of $Sp_{2k}$ whose Levi part is isomorphic to $GL_a\times ...\times GL_a \times Sp_{2(k-ab)}$, where $GL_a$ is repeated $b$ times. We denote by $Q_k$ the Siegel parabolic subgroup of $Sp_{2k}$. We use $[U]$ to denote the automorphic quotient $U(F)\backslash U(\mathbb{A})$. Since $Sp_{2k}^{(n)}(\mathbb{A})$ splits over $Sp_{2k}(F)$, it makes sense to talk about the automorphic quotient $Sp_{2k}(F) \backslash Sp_{2k}^{(n)}(\mathbb{A})$, which we denote by $[Sp_{2k}^{(n)}]$.

% notations of different covers.
Similarly we can define global covering groups $SL_2^{(n)}(\mathbb{A})$, $Sp_{2n}^{(2n)}(\mathbb{A})$, $Sp_{4n}^{(n)}(\mathbb{A})$ and $Sp_{2n}^{(2)}(\mathbb{A})$ and their automorphic quotients. We introduce the conventions of the various covering groups here.  Elements in $SL_2^{(n)}(\mathbb{A})$ are denoted by $(g,\epsilon)$, where $g\in SL_2(\mathbb{A})$, $\epsilon \in \mu_n$.  Elements in $Sp_{2n}^{(2n)}(\mathbb{A})$ are denoted by $\bar{h} = (h, \epsilon)$, where $h\in Sp_{2n}(\mathbb{A})$, $\epsilon\in \mu_{2n}$.

\begin{remark}\label{rem:cover interaction}
We record two facts about how these covering groups interact.

(i) Let $H^\prime$ be a standardly embedded symplectic block subgroup of $Sp_{2k}$, for example $Sp_{2m}$ embedded by $h \mapsto \text{diag}(I_{k-m}, h, I_{k-m})$. Then the restriction of $\rho$ to $H^\prime(\mathbb{A})$ is exactly the modified cocycle defining the $n$-fold cover of $H^\prime(\mathbb{A})$. In fact, at every place, the restriction of $\sigma_\nu$ to $H^\prime(F_\nu)$ is the block-compatible cocycle of $H^\prime(F_\nu)$, by the block-compatibility formula \cite[\S 3, Theorem 11]{B-L-S}, since the determinant factors appearing there are trivial on symplectic blocks. For $\nu \notin S_0$, the restriction of $\eta_\nu$ to $H^\prime(F_\nu) \cap K_\nu = H^\prime(\mathcal{O}_\nu)$ is a splitting of this cocycle, and hence coincides with the splitting used to define the cover of $H^\prime(\mathbb{A})$, because $H^\prime(\mathcal{O}_\nu)$ is perfect \cite[Lemma 11.1]{M} and its splitting is therefore unique. Away from $H^\prime(\mathcal{O}_\nu)$, and at $\nu \in S_0$, both correction maps equal $1$. In particular, embeddings such as $SL_2 \subset Sp_{4n}$ and $Sp_{2n} \subset Sp_{4n}$ lift naturally to embeddings of the corresponding $n$-fold covers.

(ii) Covers of different degrees are compatible through the power maps of $\mu_{2n}$. The Hilbert symbols satisfy $(x,y)_n = (x,y)_{2n}^2$ and $(x,y)_2 = (x,y)_{2n}^n$. Hence the cocycle defining $Sp^{(n)}_{2k}(\mathbb{A})$ (resp. $Sp^{(2)}_{2k}(\mathbb{A})$) is the square (resp. the $n$-th power) of the cocycle defining $Sp^{(2n)}_{2k}(\mathbb{A})$, again using the uniqueness of the splittings over $K_\nu$. Therefore, $(g, \epsilon) \mapsto (g, \epsilon^2)$ and $(g, \epsilon) \mapsto (g, \epsilon^n)$ are group homomorphisms from $Sp^{(2n)}_{2k}(\mathbb{A})$ to $Sp^{(n)}_{2k}(\mathbb{A})$ and $Sp^{(2)}_{2k}(\mathbb{A})$ respectively. Expressions such as $(h, \epsilon^2)$ and $(h, \epsilon^n)$ below are understood through these maps.
\end{remark}

Let $\pi:\mu_{2n}\rightarrow \mu_{n}$ denote the squaring map. Fix a function $s:\mu_{n}\rightarrow \mu_{2n}$ such that $\pi\circ s$ is the identity map on $\mu_n$. Define a function $i:SL_2^{(n)}(\mathbb{A})\rightarrow Sp_{2n}^{(2n)}(\mathbb{A})$ by 
\begin{equation*}
    i((g, \epsilon)) = (\text{diag}(I_{n-1}, g, I_{n-1}), s(\epsilon)).
\end{equation*}
Define another function $j:SL_2^{(n)}(\mathbb{A})\rightarrow Sp_{2n}^{(2)}(\mathbb{A})$ by 
\begin{equation*}
    j((g, \epsilon)) = (\text{diag}(I_{n-1}, g, I_{n-1}), s(\epsilon)^n).
\end{equation*}

\subsection{Root Exchange Lemma}
In this section, we introduce the lemma on global root exchange. The main references are \cite[\S 2]{G-S2} and \cite{G-R-S2}. Let $H$ denote a symplectic group or its cover. Let $C, X, Y$ be $F$-unipotent subgroups of $H$ such that $X$ and $Y$ are abelian, intersect $C$ trivially, normalize $C$, and satisfy $[X, Y] \subset C$. Let $B=CY$, $D=CX$, $A=BX=DY$. Let $\psi_C$ be a character on $C(\mathbb{A})$ that is trivial on $C(F)$ such that 
\begin{enumerate}[label=\arabic*.]
    \item $X(\mathbb{A})$, $Y(\mathbb{A})$ preserve $\psi_C$, when acting by conjugation.
    \item $\psi_C([x,y])$ defines a bi-multiplicative non-degenerate pairing on $X(\mathbb{A})\times Y(\mathbb{A})$.
\end{enumerate}
Extend $\psi_C$ to characters $\psi_B$ of $B(\mathbb{A})$ and $\psi_D$ of $D(\mathbb{A})$ by making $\psi_B$ trivial on $Y(\mathbb{A})$ and $\psi_D$ trivial on $X(\mathbb{A})$.
Let $\mathcal{A}$ be a space of smooth automorphic functions of moderate growth on $H(\mathbb{A})$. Assume that $\mathcal{A}$ is invariant under right translation by elements of $H(\mathbb{A})$, and that, as an $H(\mathbb{A})$ module, it satisfies the Dixmier--Malliavin lemma \cite{D-M}: every $f \in \mathcal{A}$ is a finite sum of convolutions $\int_{X(\mathbb{A})} \xi(x)\, f^{\prime}(\,\cdot\, x)\,dx$ with $f^{\prime} \in \mathcal{A}$ and $\xi \in C_c^{\infty}(X(\mathbb{A}))$. Let $\Omega\subset H(\mathbb{A})$ be the subset of elements $h$ which satisfy the following two properties. 
\begin{enumerate}[label=\arabic*.]
	\item $[Y({\mathbb{A}}),h]\subset D$ and $\psi_D([Y(\mathbb{A}),h])=1$;
	\item For every $x\in X(\mathbb{A})$, $y\in Y(\mathbb{A})$, $[yxy^{-1},h]\in D(\mathbb{A})$ and
	$\psi_D([yxy^{-1},h])=1$.	
\end{enumerate}

\begin{lemma}\label{re lemma}
    For each $f\in \mathcal A$, there exists  $\varphi\in {\mathcal A}$ such that for all $h\in \Omega$,
    \begin{equation}\label{mix11}
    \int\limits_{B(F)\backslash B({\mathbb{A}})}
    f(vh)\psi_B^{-1}(v)dv=
    \int\limits_{D(F)\backslash D({\mathbb{A}})}
    \varphi(uh)\psi^{-1}_D(u)du.
    \end{equation}
\end{lemma}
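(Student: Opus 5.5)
This is the standard global root exchange of Ginzburg--Rallis--Soudry. The plan is to pass through the intermediate integral over $C$ and to use Fourier expansion along $X$ together with the Dixmier--Malliavin lemma. Write
\[
I_B(f)(h)=\int_{B(F)\backslash B(\mathbb{A})} f(vh)\psi_B^{-1}(v)\,dv .
\]
Since $B=CY$, with $Y$ normalizing $C$ and preserving $\psi_C$, we can factor
\[
I_B(f)(h)=\int_{Y(F)\backslash Y(\mathbb{A})}\int_{[C]} f(cyh)\psi_C^{-1}(c)\,dc\,dy .
\]

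\textbf{Step 1: $Y$-invariance of the $C$-coefficient.} Consider $F_C(g)=\int_{[C]} f(cg)\psi_C^{-1}(c)\,dc$ as a function of $g$, and expand it along $X(F)\backslash X(\mathbb{A})$. Here $X$ is abelian, normalizes $C$ and preserves $\psi_C$, so $F_C(xg)$ for $x\in X(\mathbb{A})$ is a function on the compact abelian quotient $[X]$. Its Fourier coefficients are indexed by characters of $[X]$. By non-degeneracy of the pairing $\psi_C([x,y])$, every such character has the form $x\mapsto\psi_C([x,\gamma])$ with $\gamma\in Y(F)$. Conjugating by $\gamma$, using left $\gamma$-invariance of $f$, and noting that $[X,Y]\subset C$, each coefficient becomes the $\psi_D$-coefficient evaluated at $\gamma g$. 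This gives
\[
F_C(g)=\sum_{\gamma\in Y(F)} I_D(f)(\gamma g),\qquad I_D(f)(g)=\int_{[D]} f(ug)\psi_D^{-1}(u)\,du .
\]

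\textbf{Step 2: unfolding.} Substitute this expansion into $I_B(f)(h)$ and collapse the sum over $Y(F)$ with the integral over $Y(F)\backslash Y(\mathbb{A})$. We get
\[
I_B(f)(h)=\int_{Y(\mathbb{A})} I_D(f)(yh)\,dy .
\]
Next we move $h$ past $y$. Write $yh=h\,(h^{-1}yh)$, or better $yh=h y [y^{-1},h^{-1}]$-type identities, and use $h\in\Omega$. Condition 1, $[Y(\mathbb{A}),h]\subset D$ with $\psi_D$ trivial on these commutators, lets us replace $I_D(f)(yh)$ by a $D$-coefficient of the translate of $f$ in the form needed. Condition 2 handles the conjugates $yxy^{-1}$ that appear in Step 3.

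\textbf{Step 3: Dixmier--Malliavin to absorb the $Y(\mathbb{A})$-integral.} The integral over the non-compact $Y(\mathbb{A})$ need not define an automorphic form directly. So we first apply the Dixmier--Malliavin hypothesis and write
\[
f=\sum_i \int_{X(\mathbb{A})}\xi_i(x)\,f_i(\cdot\,x)\,dx .
\]
For each $i$, conjugate $x$ through $y$ inside the $D$-coefficient, using $[x,y]\in C$ and the pairing $\psi_C([x,y])$. Then $I_D$ of the $x$-translate becomes $\psi_C([x,y])$ times an $I_D$ of $f_i$. The $x$-integration therefore produces the Fourier transform $\hat\xi_i(y)$, which is a Schwartz function on $Y(\mathbb{A})$. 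This makes the $Y(\mathbb{A})$-integral absolutely convergent, and lets us define
\[
\varphi=\sum_i \int_{Y(\mathbb{A})}\hat\xi_i(y)\,f_i(\cdot\,y)\,dy\in\mathcal{A}.
\]
Since $\hat\xi_i$ is Schwartz and $\mathcal{A}$ is a smooth $H(\mathbb{A})$-module, $\varphi$ again lies in $\mathcal{A}$. It is important that $\varphi$ is independent of $h$. Condition 2 for $h\in\Omega$ is exactly what allows moving $yxy^{-1}$ across $h$ without changing the $\psi_D$-coefficient, and yields $I_B(f)(h)=I_D(\varphi)(h)$.

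\textbf{Main obstacle.} The delicate point is the bookkeeping in Steps 2--3: commuting $x$, $y$ and $h$ inside the $\psi_D$-coefficient, and making sure the commutator characters cancel correctly so that $\varphi$ does not depend on $h$. On the covering group, all of $C,X,Y$ are unipotent and embedded via $u\mapsto(u,1)$, and conjugation by lifts is well defined. So the linear argument of \cite[\S 2]{G-S2} should transfer verbatim, with $h\in\Omega$ interpreted through its image in $Sp_{2k}(\mathbb{A})$.
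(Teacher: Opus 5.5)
Your proposal is correct and is the same argument the paper relies on (the paper cites part two of Lemma~1 of \cite{G-S2}): you Fourier expand along $X$, unfold to $\int_{Y(\mathbb{A})}\int_{[D]}f(uyh)\psi_D^{-1}(u)\,du\,dy$, and then apply Dixmier--Malliavin along $X$ with exactly the paper's choice $\varphi=\sum_i\int_{Y(\mathbb{A})}\hat\xi_i(y)f_i(\cdot\,y)\,dy$. In the bookkeeping you flag, the commutators must sit to the left of $yh$ (with $[a,b]=aba^{-1}b^{-1}$), namely $hy=[h,y]\,yh$ and $yhx=k\,yh$ with $k=[y,h]\,[x',h]^{-1}x'\,[y,h]^{-1}$, $x'=yxy^{-1}$; the two conditions defining $\Omega$ then give $\psi_D(k)=\psi_C([y,x])=\psi_D^{-1}([x,y])$ (not $\psi_C([x,y])$), which is precisely the kernel of the paper's $\hat\xi_i$.
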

Explicitly, given $f \in \mathcal{A}$, by the lemma of Dixmier and Malliavin, there exist $f_1,...,f_r\in \mathcal A$, and $\xi_1,...,\xi_r\in C_c^\infty(X({\mathbb{A}}))$, such that for all $z\in H(\mathbb{A})$,
\begin{equation*}
f(z)=\sum_{i=1}^r\int\limits_{X({\mathbb{A}})}\xi_i(x)f_i(zx)dx.
\end{equation*}
We choose $\varphi$ to be 
\begin{equation*}
\varphi(z)=\sum_{i=1}^r\int\limits_{Y({\mathbb{A}})}\hat{\xi}_i(y)f_i(zy)dy,\quad z\in H(\mathbb{A}),
\end{equation*}
where
\begin{equation*}
\hat{\xi}_i(y)=\int\limits_{X({\mathbb{A}})}\xi_i(x)\psi^{-1}_D([x,y])dx.
\end{equation*}
For a proof of the lemma, we refer to the proof of part two of Lemma 1 in \cite{G-S2}.

\subsection{The Generalized Speh Representation on the Siegel Levi Cover}\label{sec:speh}
We recall the construction and the property of the generalized Speh representation appearing in our construction. Let $GL^{(n)}_{d}(\mathbb{A})$ denote the covering group of $GL_d(\mathbb{A})$ obtained by restricting $Sp^{(n)}_{2d}(\mathbb{A})$ to the Levi part of the Siegel parabolic subgroup, via $g \mapsto \text{diag}(g, g^*)$. This cover is denoted by $GL_d^{(n,n)}$ for odd $n$ in \cite[\S 2.7]{K}. Let $\chi$ be a unitary Hecke character of $\mathbb{A}^\times$. The center of $GL_1^{(n)}(\mathbb{A})$ is the preimage of $(\mathbb{A}^\times)^n$, and $(x,\epsilon)\mapsto\epsilon\chi(x)$ is a genuine character of it. Extending this character to a maximal abelian subgroup of $GL_1^{(n)}(\mathbb{A})$ and inducing to $GL_1^{(n)}(\mathbb{A})$ gives a genuine unitary irreducible automorphic representation $\tau$ of $GL_1^{(n)}(\mathbb{A})$. The isomorphism class of this representation depends only on the restriction of $\chi$ to $(\mathbb{A}^\times)^n$ by the Stone--von Neumann theorem. For $c = 1, 2$, let $\mathcal{L}_{\tau,c}$ be the residual representation of $GL^{(n)}_{cn}(\mathbb{A})$ generated by the multi-residues at $\bigl(\tfrac{cn-1}{2n}, \tfrac{cn-3}{2n}, \dots, \tfrac{1-cn}{2n}\bigr)$ of the Eisenstein series induced from $|\det|^{\zeta_1}\tau \otimes \cdots \otimes |\det|^{\zeta_{cn}}\tau$ on the Borel subgroup. For $c = 1$, $\mathcal{L}_{\tau,1}$ is irreducible and generic \cite[\S 3.5]{K}. We denote it by $\Theta_{GL^{(n)}_n(\mathbb{A})}$ and call it the theta representation. For $c = 2$, let $\Delta(\Theta_{GL^{(n)}_n(\mathbb{A})}, 2)$ denote an irreducible $(n,2)$ summand $\mathcal{E}_\tau$ of $\mathcal{L}_{\tau,2}$, which exists by \cite[Theorem 54]{K}. It is the generalized Speh representation attached to two copies of $\Theta_{GL^{(n)}_n(\mathbb{A})}$.

The main property of this representation is that its generic Fourier coefficients along the unipotent orbit $(n^2)$ are not identically zero and it does not support any generic Fourier coefficient on an orbit greater than or not comparable with $(n^2)$. In the linear case these properties are proved in \cite{G, J-L}; for the covering case we refer to Kaplan \cite[\S 3]{K}.

\section{Main Theorem}\label{sec:main theorem}

\subsection{The Integral on $SL_2^{(n)}$}
Let $\pi$ be a unitary genuine irreducible cuspidal automorphic representation of $SL_2^{(n)}(\mathbb{A})$. Its complex conjugate $\overline{\pi}$ is its contragredient, and the same holds for the local components. The representation $\overline{\pi}$ is anti-genuine. Let $\varphi^{(n)}$ be a cusp form in the space of $\overline{\pi}$. Let $f_s$ be a section on $Sp^{(n)}_{4n}$ induced from the generalized Speh representation $\Delta(\Theta_{GL^{(n)}_n(\mathbb{A})}, 2) |\cdot|^s$ on the Siegel parabolic subgroup and let $E^{(n)}(f_s)$ be the associated Eisenstein series. We define an embedding $t$ of $SL_2 \times SL_2$ into $Sp_{4n}$ by 
\begin{equation*}
    t(g_1, g_2) = \text{diag}\left(g_1, ... , g_1, \begin{pmatrix} g_{11}&&g_{12}\\&g_2&\\g_{21}&&g_{22} \end{pmatrix}, g_1^{\ast}, ..., g_1^{\ast}\right) \in Sp_{4n}, \quad g_1 =\begin{pmatrix} g_{11}&g_{12}\\g_{21}&g_{22} \end{pmatrix},
\end{equation*}
where $g_1 \in SL_2$ and $g_1^\ast = J_2\,{}^tg_1^{-1}J_2$ appear $n-1$ times. Let $Q$ be a standard parabolic subgroup of $Sp_{4n}$, whose Levi part $M_Q$ is isomorphic to $GL_2\times...\times GL_2\times Sp_{4}$, where $GL_2$ appears $n-1$ times. The image of $t$ lies in the Levi subgroup $M_Q$, normalizes $U_{2^{n-1}, 4n}$ and stabilizes the character $\psi_{U_{2^{n-1}, 4n}}$ defined below. Note that $t(u, I_2)$ is upper triangular unipotent for $u$ in the upper unipotent subgroup $U$ of $SL_2$. Since $\rho$ is trivial on unipotent subgroups, and since by Remark~\ref{rem:cover interaction}(i) the restriction of $\rho$ to the embedded copy of $SL_2$ defines the cover $SL_2^{(n)}(\mathbb{A})$, this embedding is well defined on $U \times SL^{(n)}_2$. This embedding $t$ is enough to define the integral of interest in this paper. The embedding of two copies of $SL_2^{(n)}$ into $Sp^{(n)}_{4n}(\mathbb{A})$, which the function $\xi(\varphi^{(n)},f_s)$ of Section~\ref{L:sec} requires, is given in Section~\ref{L:copies}. In general, Kaplan gave a careful treatment of such embedding of covering groups and proved that the doubling integrals on covers are well defined \cite[\S 2.5]{K}. 

The integral we are interested in is the analogue, for the covering group $SL_2^{(n)}$, of the integral constructed in \cite[\S 4.2]{G-S2} derived from the doubling method:
\begin{equation}\label{original}
\int\limits_{[SL_2^{(n)}]}\int\limits_{[U_{2^{n-1},4n}]}\int\limits_{F\backslash\mathbb{A}} \varphi^{(n)}(\tilde{g})E^{(n)}(f_s)(u\:t\left(\begin{pmatrix}
        1&y\\0&1
    \end{pmatrix}, \tilde{g}\right))\psi^{-1}_{U_{2^{n-1}, 4n}}(u)\psi^{-1}(y)dydud\tilde{g},
\end{equation}
where $\psi$ is a nontrivial character on $\mathbb{A}$ that is trivial on $F$, and $\psi_{U_{2^{n-1}, 4n}}$ is a certain automorphic character on $U_{2^{n-1}, 4n}(\mathbb{A})$ which is trivial on $U_{2^{n-1}, 4n}(F)$. More concretely, if we write the group $U_{2^{n-1}, 4n}(\mathbb{A})$ as 
\[
U_{2^{n-1},\,4n}(\mathbb{A})
=
\left\{
\begin{pmatrix}
I_2 & X_1 &        &        &        & \cdots &        &        &        \\
    & I_2 & X_2    &        &        &        &        &        &        \\
    &     & \ddots & \ddots &        & \ddots &        &        &        \\
    &     &        & I_2    & Y      &        &        &        &  \vdots \\
    &     &        &        & I_4    & Y^\ast &        &        &        \\
    &     &        &        &        & I_2    & X_{n-2}^\ast &  &        \\
    &     &        &        &        &        & \ddots & \ddots &        \\
    &     &        &        &        &        &        & I_2    & X_1^\ast \\
    &     &        &        &        &        &        &        & I_2
\end{pmatrix} :\quad X_i\in \text{Mat}_{2\times2}(\mathbb{A}), Y\in\text{Mat}_{2\times4}(\mathbb{A})
\right\},
\]
then 
\begin{equation*}
    \psi_{U_{2^{n-1}, 4n}} = \psi\left(\sum\limits_{i=1}^{n-2} \text{Tr}(X_i) + Y_{1,1} + Y_{2,4}\right),
\end{equation*}
where $Y_{i,j}$ denotes the $(i,j)$ entry of $Y$.

\begin{theorem}\label{Theorem}
    Given a Schwartz function $\phi^\prime \in \mathcal{S}(\mathbb{A})$, there are nontrivial choices of sections $f_s \in \text{Ind} \:\Delta(\Theta_{GL^{(n)}_n(\mathbb{A})}, 2) |\cdot|^s$ and $f^\prime_s$ such that the integral \eqref{original} is equal to the new way type integral
\begin{equation}
\int\limits_{[SL_2^{(n)}]} \int\limits_{[U_{1^{n-1}, 2n}]}
\varphi^{(n)}(\tilde{g})\, \theta^{\phi^\prime}_{2}(l^\prime(v)j(\tilde{g}))\,
E^{(2n)}(f^\prime_s)(vi(\tilde{g}))\,
\psi^{-1}_{\hat{U}_{1^{n-1}, 2n}}(v)\:dvd\tilde{g},
\end{equation}
where $E^{(2n)}(f^\prime_s)$ is an Eisenstein series on $Sp^{(2n)}_{2n}$ corresponding to a section $f^\prime_s$ of
\begin{equation*}
\text{Ind}_{Q_n(\mathbb{A})}^{Sp^{(2n)}_{2n}(\mathbb{A})}\, \Theta_{GL_n^{(n)}(\mathbb{A})}\gamma_{\psi}|\det\cdot|^{s},
\end{equation*}
where $\gamma_{\psi}$ is the Weil factor attached to $\psi$ (cf. \cite[\S 2.8]{K}) and $\psi^\prime(t) = \psi(t/2)$. The function $\theta^{\phi^\prime}_{2}$ is the theta series on $\mathcal{H}_3(\mathbb{A})SL_2^{(2)}(\mathbb{A})$ attached to $\phi^\prime$ and to the character $\psi^\prime$, where $\mathcal{H}_3$ is the Heisenberg group of $SL_2$, and $l^\prime$ is a homomorphism from $U_{1^{n-1}, 2n}(\mathbb{A})$ to $\mathcal{H}_3(\mathbb{A})$ defined in Section~\ref{sec:collapse}. The maps $i$ and $j$ are introduced in Section~\ref{sec:prelim}. The character $\psi_{\hat{U}_{1^{n-1}, 2n}}$ is an automorphic character of $\hat{U}_{1^{n-1}, 2n}(\mathbb{A})$ that is specified later.
\end{theorem}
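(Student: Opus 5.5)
We follow the strategy of \cite[\S 4.2]{G-S2} and carry it over to the cover $Sp^{(n)}_{4n}(\mathbb{A})$. There are two stages. First we rearrange the inner $du\,dy$ integration in \eqref{original} into a Fourier--Jacobi coefficient of $E^{(n)}(f_s)$ along a unipotent subgroup of $Sp_{4n}$ with Levi part containing $Sp_{2n}$. This is done by root exchanges, a Weyl element conjugation and Ikeda's method \cite{I}. Then we unfold this Fourier--Jacobi coefficient to an Eisenstein series on $Sp^{(2n)}_{2n}$ paired with a theta series.

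\textbf{Step 1: root exchanges and conjugation.} We combine the $y$-integral with the $u$-integral. The element $t\bigl(\left(\begin{smallmatrix}1&y\\&1\end{smallmatrix}\right),I_2\bigr)$ is upper unipotent, so $U_{2^{n-1},4n}$ together with these elements is a unipotent group $C_0$ with an automorphic character $\psi_{C_0}$. We then choose a Weyl element $w$ of $Sp_{4n}$ that interleaves the coordinates and moves $C_0$ and the embedded copy $t(1,SL_2)$ into a standard position.

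After conjugating by $w$, we apply Lemma~\ref{re lemma} repeatedly, one column (or row) of root subgroups at a time. At each step we exchange a group $Y$ of roots in the current domain for a group $X$ of roots not yet integrated. The conditions of the lemma (bilinear pairing via $\psi_C([x,y])$, invariance of the character) are checked by matrix commutator calculations identical to the linear case. The points $h=t(1,\tilde g)$ lie in $\Omega$ because the exchanged roots commute with the embedded $SL_2$ modulo $D$; this should be packaged as Lemma~\ref{lem:addacolumn}.

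The output is an integral over a unipotent group of the form $U_{1^{n-1},4n}$ times a Heisenberg group $\mathcal{H}_{4n-2(n-1)+1}$, with $\tilde g$ acting through $i(\tilde g)$ on the $Sp_{2n}$-part. The coefficient is a Gelfand--Graev type coefficient on the $U_{1^{n-1}}$ part and the $\psi$ character on the center of the Heisenberg part. Finally, by Ikeda's method we rewrite the integral over the abelian half of the Heisenberg group as a Fourier--Jacobi coefficient $\int E^{(n)}(f_s)(\cdot)\,\theta^{\phi}(\cdot)$, with a theta series on the double cover of $Sp_{2n}$. Replacing $f_s$ by the function $\varphi$ produced by Lemma~\ref{re lemma} is what makes the choice of $f_s$ ``nontrivial'' rather than arbitrary.

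\textbf{Step 2: unfolding the Fourier--Jacobi coefficient.} We unfold $E^{(n)}(f_s)$ through $Q_{2n}(F)\backslash Sp_{4n}(F)/\,(\text{Jacobi group})$. By the property of $\Delta(\Theta_{GL_n^{(n)}},2)$ recalled in Section~\ref{sec:speh}, every double coset whose contribution would require a Fourier coefficient attached to an orbit greater than or not comparable with $(n^2)$ vanishes. Only the open-type coset survives. Its contribution factors through a Fourier coefficient of $\Delta(\Theta,2)$ along the orbit $(n^2)$, which restricts on the diagonal $GL_n$ to $\Theta_{GL^{(n)}_n}$. The theta series integrated against it produces the Weil factor $\gamma_\psi$ and forces the $2n$-fold cover.

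This yields $E^{(2n)}(f'_s)$ with $f'_s$ in $\text{Ind}\,\Theta_{GL^{(n)}_n}\gamma_\psi|\det|^s$. The leftover Heisenberg integration collapses, via the $SL_2$-restriction of the Weil representation with character $\psi'(t)=\psi(t/2)$ (the factor $1/2$ arising from the symplectic form normalization), to the theta series $\theta^{\phi'}_2(l'(v)j(\tilde g))$. Here $\phi'$ is an arbitrary Schwartz function obtained from a partial Fourier transform of $\phi$. This gives the stated new way integral.

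\textbf{Main obstacle.} The hardest point is the cover bookkeeping. Conjugating by $w$ and the unipotent/Levi decompositions in the unfolding requires identities $\widetilde{w}\,(x,\epsilon)\,\widetilde{w}^{-1}=(wxw^{-1},\epsilon')$ with controlled $\epsilon'$ for the non-block-compatible cocycle $\rho$. We also need that $i$ and $j$ intertwine the cocycles, so that genuine characters match and the product $\theta\cdot E^{(2n)}$ descends to an anti-genuine-compatible function on $SL_2^{(n)}$. We would first prove these locally, using block-compatibility of $\sigma_\nu$. At unramified places we would then use Lemma~\ref{lem:divisible} and the uniqueness of $\eta_\nu$ to show that the $\eta$-corrections are trivial on the relevant unipotent and Weyl representatives; these are the computations of Section~\ref{sec:cocycle}.
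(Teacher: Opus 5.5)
\textbf{Gap 1: nothing produces $\theta^{\phi^\prime}_{2}(l^\prime(v)j(\tilde g))$.} Your skeleton (root exchanges and a Weyl conjugation, Ikeda's method, unfolding a Fourier--Jacobi coefficient) matches the paper, but the step meant to produce this theta series does not work as written.

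You end Step 1 with a single Fourier--Jacobi coefficient $\int E^{(n)}(f_s)\,\theta^{\phi}$, and Step 2 uses up that theta series. After $\hat B$ is conjugated across $w_0$, the $[\hat B]$-integration kills every $\xi\neq 0$, and only $\omega_{\psi_2}(\cdot)\phi(0)$ survives; that is where $\gamma_\psi$ comes from. Nothing is left from which a sum $\sum_{\xi\in F}\omega_{\psi^\prime}(\cdot)\phi^\prime(\xi)$ could arise, and integrating a character over a ``leftover'' Heisenberg variable cannot create one.

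The missing idea is the actual form of Ikeda's step.
\begin{itemize}
\item After the root exchanges the domain is $D_2=U_0Y_0\hat U_{1^{n-1},2n}$. In the Heisenberg group $\mathcal{H}_{2n+1}$ attached to the $n$-th row of $U_{1^n,4n}$, $U_0$ contains only the center, and $Y_0$ only the entries $(n,2n+2),\dots,(n,3n)$ of one Lagrangian. So this is not a full Heisenberg integration.
\item One convolves the section with $\varphi^{\phi_1,\phi_2}$ and unfolds the $[U_0]$-integral. Ikeda's Lemma 1.1 then turns the sum over $U_0(F)\backslash U_{1^n,4n}(F)$ into a product $\theta^{\phi_1}(l(y_0)vj(\tilde g))\,\overline{\theta^{\phi_2}(l(u)vj(\tilde g))}$ of two theta series with independent Schwartz functions (Proposition~\ref{ikeda prop}).
\item The second factor is absorbed into the Fourier--Jacobi coefficient, which unfolds to $E^{(2n)}(f^\prime_s)$.
\item The first factor stays outside, and the $[Y_0]$-integral against $\psi_{Y_0}(y_0)=\psi(y_2)$ collapses it (Lemma~\ref{lem:collapse}). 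Orthogonality forces $\xi_1=\dots=\xi_{n-2}=0$ and $\xi_{n-1}=\tfrac12$, and leaves $\xi_n$ free because $(n,2n+1)\notin Y_0$. The substitution $\zeta=2\xi_n$ then turns $\omega_{\psi_2}$ on this line into $\omega_{\psi^\prime}$ of $\mathcal{H}_3SL^{(2)}_2$.
\end{itemize}
This is the real origin of $\psi^\prime(t)=\psi(t/2)$. Also, $\phi^\prime(\xi)=\phi_1(0,\dots,0,\tfrac12,\tfrac12\xi)$ is a restriction of $\phi_1$, not a partial Fourier transform of the $\phi$ in the Fourier--Jacobi coefficient. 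Accordingly, the choice of $f_s$ must include the Ikeda convolution $\rho(\varphi^{\phi_1,\phi_2})$, not only the Dixmier--Malliavin sections from the root exchanges.

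\textbf{Gap 2: the orbit property is used in the wrong place.} Lemma~\ref{lem:addacolumn} is not a statement about $\Omega$ in the root exchanges, and Step 2 misplaces where the property of $\Delta(\Theta_{GL^{(n)}_n(\mathbb{A})},2)$ enters.
\begin{itemize}
\item The non-open cosets in $Q_{2n}(F)\backslash Sp_{4n}(F)/U_{1^n,4n}(F)\widehat{Sp_{2n}}(F)$ vanish for an elementary reason. If $\epsilon_1$ has a nonzero diagonal entry, $\gamma$ conjugates into the Siegel radical $U_{2n}$ a root subgroup with central image under $l$ and nontrivial character. That term then integrates a nontrivial character over $F\backslash\mathbb{A}$.
\item The orbit condition enters afterwards. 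Once $R$ is moved across $w_0$, the unipotent integration on $GL_{2n}$ lacks the $(n+1)$-st column. Lemma~\ref{lem:addacolumn} adds it, because the nonconstant Fourier terms are coefficients attached to $(1^{n-1},n+1)$, which is incomparable with $(n^2)$.
\item Only then does $f^{\psi^\prime}_s$ contain the constant term along the $(n,n)$ parabolic of $GL_{2n}$, followed by a Whittaker coefficient on the second $GL_n$ block. That is what makes $\text{diag}(A,I_n)$ act through $\Theta_{GL^{(n)}_n(\mathbb{A})}$.
\end{itemize}
A Fourier coefficient along $(n^2)$ is stabilized by a $GL_2$, not by a diagonal $GL_n$, so your description of this step would not yield the $\Theta$-equivariance.

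The cover bookkeeping you anticipate is right in spirit. The identity actually needed is Proposition~\ref{prop:cocycle global}: conjugating $\text{diag}(I_n,A,A^\ast,I_n)$ by $w_0$ leaves $\epsilon$ unchanged.
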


\subsection{Using Root Exchange and Conjugation}
First, we apply root exchanges. Let $X_1$ be the unipotent subgroup of $Sp_{4n}$ generated by root subgroups $I_{4n} + x_{2i-1, 2i} e^\prime_{2i-1,2i}$ and $Y_1$ be the subgroup of $Sp_{4n}$ generated by root subgroups $I_{4n} + x_{2i, 2i+1} e^\prime_{2i,2i+1}$ for $1\leq i \leq n-1$. Let $C_1$ be the subgroup of $U_{2^{n-1}, 4n}$ generated by root subgroups that are not in $Y_1$. Let $D_1 = C_1X_1$, and let $\psi_{D_1}$ denote the character of $D_1(\mathbb{A})$ obtained by restricting $\psi_{U_{2^{n-1}, 4n}}$ to $C_1$ and extending it trivially on $X_1$. By Lemma~\ref{re lemma}, there exists another section $f^\prime_s$ such that \eqref{original} is equal to
\begin{equation}\label{rootexchange1}
    \int\limits_{[SL_2^{(n)}]}\int\limits_{[D_1]}\int\limits_{F\backslash\mathbb{A}} \varphi^{(n)}(\tilde{g})E^{(n)}(f^\prime_s)(u\:t\left(\begin{pmatrix}
        1&y\\0&1
    \end{pmatrix}, \tilde{g}\right))\psi^{-1}_{D_1}(u)\psi^{-1}(y)dydud\tilde{g}.
\end{equation}

Among the root subgroups of the form \(I_{4n}+x_{i,j}e'_{i,j}\), the restriction of \(\psi_{D_1}\) is nontrivial precisely on those corresponding to
\[
(i,i+2),\quad 1\le i\le 2n-3,
\]
together with
\[
(2n-2,\,2n+2).
\]

Let $w_1$ be the permutation matrix of $GL_{2n}$ corresponding to the permutation 
\begin{align*}
2i+1 &\mapsto i+1 \quad \text{for } 1 \le i \le n-1, \\
2i   &\mapsto n+i \quad \text{for } 1 \le i \le n-1\\
1, 2n &\mapsto 1, 2n.
\end{align*}
% 3\mapsto 2, 5\mapsto3, ..., 2n-1\mapsto n, 2\mapsto n+1, 4\mapsto n+2, ..., 2n-2 \mapsto 2n-1.
Let $w = \text{diag}(w_1, w_1^\ast) \in Sp_{4n}(F)$. Note that $w_1^\ast = w_1$ for this particular $w_1$. Since $E^{(n)}(f^\prime_s)$ is left invariant under $Sp_{4n}(F)$, we may replace $u$ by $wu$ in \eqref{rootexchange1}. Denote the right $w^{-1}$-translation of $f^\prime_s$ by $f^{\prime\prime}_s$. The subgroup of all elements $wu\:t\left(\begin{pmatrix}1&y\\0&1\end{pmatrix}, I_2\right)w^{-1}$ is generated by the root subgroups
$ I_{4n} + x_{i,j} e^\prime_{i,j} $
for the following pairs $(i,j)$:
\begin{enumerate}
    \item the pairs $(i,j)$ with $1\leq i<j \leq 2n$ that are not of the form
    $2 \le i \le n$ and $n+1 \le j \le n+i-1$,
    nor $(n,2n)$;

    \item the lower triangular pairs $(i,j)$ with $n+1 \le i \le 2n-2$ and
    $i-(n-2) \le j \le n$;

    \item the pairs $(i,j)$ with $1 \le i \le 2n$ and $2n+1 \le j \le 4n+1-i$,
    except $(n,2n+1)$ and $(2n,2n+1)$.
\end{enumerate}

Denote this subgroup by $B_2$. Then, \eqref{rootexchange1} can be written as
\begin{equation}\label{afterconjugation}
    \int\limits_{[SL_2^{(n)}]}\int\limits_{[B_2]}  \varphi^{(n)}(\tilde{g})E^{(n)}(f^{\prime\prime}_s)(ut(I_2, \tilde{g}))\psi^{-1}_{B_2}(u)dud\tilde{g},
\end{equation}
where $\psi_{B_2}$ is an automorphic character on $B_2(\mathbb{A})$. Among the root subgroups of the form \(I_{4n}+x_{i,j}e'_{i,j}\), the restriction of \(\psi_{B_2}\) is nontrivial precisely on those corresponding to
\[
(i, i+1),\quad 1\leq i\leq 2n-2 \text{ and } i\neq n,
\]
together with
\[
(n,\, 2n+2)\quad \text{and}\quad (n,\, 3n+1).
\]

The group $B_2$ contains root subgroups below the diagonal, which we remove by a second root exchange. Let $X_2$ be the subgroup of $Sp_{4n}$ generated by root subgroups $I_{4n} + x_{i,j} e_{i,j}$ for $n+1 \leq i \leq 2n-2$ and $i-(n-2)\leq j \leq n$. Let $Y_2$ be the subgroup of $Sp_{4n}$ generated by the root subgroups $I_{4n} + x_{i,j} e_{i,j}$ for $n+1 \leq j \leq 2n-2$ and $j-n+1\leq i \leq n-1$. Let $C_2$ be the subgroup of $B_2$ generated by the root subgroups not in $X_2$. Let $D_2 = C_2Y_2$. It is the subgroup of the full upper triangular unipotent subgroup of $Sp_{4n}$ with zeros at the entries $(n,j)$ for $n+1 \leq j \leq 2n+1$, $(2n, 2n+1)$ and $(2n, 3n+1)$. By Lemma~\ref{re lemma}, there exists a section $f^{'''}_s$ such that \eqref{afterconjugation} is equal to 
\begin{equation}\label{rootexchange2}
    \int\limits_{[SL_2^{(n)}]}\int\limits_{[D_2]} \varphi^{(n)}(\tilde{g})E^{(n)}(f^{'''}_s)(ut(I_2, \tilde{g}))\psi^{-1}_{D_2}(u)dud\tilde{g},
\end{equation}
where $\psi_{D_2}$ is an automorphic character on $D_2(\mathbb{A})$. Among the root subgroups of the form \(I_{4n}+x_{i,j}e'_{i,j}\), the restriction of \(\psi_{D_2}\) is nontrivial precisely on those corresponding to
\[
(i, i+1),\quad 1\leq i\leq 2n-2 \text{ and } i\neq n,
\]
together with
\[
(n,\, 2n+2)\quad \text{and}\quad (n,\, 3n+1).
\]

\paragraph{\textbf{More notation.}}
The group $D_2$ can be written as the product $U_0Y_0\:\hat{U}_{1^{n-1}, 2n}$. The group $U_0$ is the subgroup of $U_{1^n, 4n}$ with zeros at the $(n,j)$ entries for $n+1\leq j\leq 3n$. The group $Y_0$ is the subgroup of $U_{1^n, 4n}$ where all strictly upper triangular entries are zero except the $(n,j)$ for $2n+2\leq j\leq 3n$. The group $U_{1^{n-1}, 2n}$ is the subgroup of the upper unipotent subgroup of $Sp_{2n}$ whose $(n, n+1)$ entry is zero. Let $\hat{U}_{1^{n-1}, 2n}$ denote the embedding of $U_{1^{n-1}, 2n}$ in $Sp^{(n)}_{4n}$ given by $g\mapsto \text{diag}(I_n, g, I_n)$. Let $\psi_{U_0}$, $\psi_{Y_0}$, and $\psi_{\hat{U}_{1^{n-1}, 2n}}$ be the corresponding restriction of $\psi_{D_2}$ to $U_0$, $Y_0$ and $\hat{U}_{1^{n-1}, 2n}$ respectively.

For any element $u$ in $U_{1^n, 4n}$, denote the $(i,j)$ entry by $u_{i,j}$. Let 
\begin{align*}
    x & = (u_{n, n+1}, u_{n, n+2}, ..., u_{n,2n})\\
    y & = (u_{n, 2n+1}, u_{n, 2n+2}, ..., u_{n, 3n})\\
    z &= u_{n, 3n+1}.
\end{align*}
Let $\mathcal{H}_{2n+1}(\mathbb{A}) =  X(\mathbb{A})\oplus Y(\mathbb{A}) \oplus z(\mathbb{A})$ denote the Heisenberg group of \cite[\S 1.3]{G-R-S1}, with the product
\begin{equation*}
    (x_1,y_1,z_1)(x_2,y_2,z_2) = \left(x_1+x_2,\ y_1+y_2,\ z_1+z_2+\tfrac12(x_1J_n\,{}^ty_2-y_1J_n\,{}^tx_2)\right).
\end{equation*}
Let $l$ denote the homomorphism from $U_{1^n, 4n}(\mathbb{A})$ to $\mathcal{H}_{2n+1}(\mathbb{A})$ given by $l(u) = (x,y,\tfrac12 z)$. Put $\psi_2(t) = \psi(2t)$. Then $\psi_2$ on the center of $\mathcal{H}_{2n+1}(\mathbb{A})$ pulls back under $l$ to $\psi_{U_0}$ on the root subgroup at the entry $(n,3n+1)$.

Under these notations, the integral becomes 
\begin{equation}\label{expanded}
    \int\limits_{[SL_2^{(n)}]} \int\limits_{[U_{1^{n-1}, 2n}]} 
    \int\limits_{[Y_0]} \varphi^{(n)}(\tilde{g}) \int\limits_{[U_0]}  E^{(n)}(f^{'''}_s) (u_0y_0\hat{v}\:t(I_2, \tilde{g})) \psi^{-1}_{U_0}(u_0)\psi^{-1}_{Y_0}(y_0) \psi^{-1}_{\hat{U}_{1^{n-1}, 2n}} (v) du_0 dy_0dvd\tilde{g}.
\end{equation}

\subsection{Application of Ikeda's method}
% (Our $U_{1^{n}, 4n}$ will be Ikeda's $V$. Our ${Sp}^{(2n)}_{4n}$ will be Ikeda's $H$. The semidirect product of $U_{1^{n}, 4n}$ and ${Sp}^{(2n)}_{4n} \subseteq Sp_{4n}$ will be Ikeda's $D$. The semidirect product is induced by conjugation action of ${Sp}^{(2n)}_{4n}$ on $U_{1^{n}, 4n}$).
In this section, we apply Ikeda's method \cite[\S 1]{I} to the $du_0$ integral in \eqref{expanded} to obtain a Fourier--Jacobi coefficient. Let $\omega_{\psi_2}$ denote the Weil representation of $\mathcal{H}_{2n+1}(\mathbb{A})Sp^{(2)}_{2n}(\mathbb{A})$ on $\mathcal{S}(X(\mathbb{A}))$ attached to the character $\psi_2$, given by the formulas of \cite[\S 1.6]{G-R-S1}. For $\phi\in\mathcal{S}(X(\mathbb{A}))$, let
\begin{equation*}
    \theta^{\phi}(l(u)\tilde{k}) = \sum_{\xi\in X(F)}\omega_{\psi_2}(l(u)\tilde{k})\phi(\xi), \qquad u\in U_{1^n,4n}(\mathbb{A}),\ \tilde{k}\in Sp^{(2)}_{2n}(\mathbb{A}),
\end{equation*}
denote the theta series. We prove the following proposition.

\begin{proposition}\label{ikeda prop}
Let $f_s$ be a smooth holomorphic section and let $\phi_1, \phi_2 \in \mathcal{S}(X(\mathbb{A}))$ be Schwartz functions. Define $f_s^{\phi_1, \phi_2} = \rho(\varphi^{\phi_1, \phi_2})f_s$, where the function $\varphi^{\phi_1, \phi_2}$ and the convolution operator $\rho(\varphi^{\phi_1, \phi_2})$ are given by \eqref{definition of phi} and \eqref{convolved} below. Then the inner integral of \eqref{expanded}
\begin{equation}\label{afterikedaleft}
     \int\limits_{[U_0]}  E^{(n)}(f^{\phi_1, \phi_2}_s) (u_0y_0\hat{v}\:t(I_2, \tilde{g})) \psi^{-1}_{U_0}(u_0) \psi^{-1}_{\hat{U}_{1^{n-1}, 2n}} (v) du_0
\end{equation}
is equal to 
\begin{equation}\label{afterikedaright}
    \theta^{\phi_1}(l(y_0)vj(\tilde{g})) \int\limits_{[U_{1^n,4n}]}\overline{\theta^{\phi_2}(l(u)vj(\tilde{g}))}E^{(n)}(f_s)(u\hat{v}\:t(I_2, \tilde{g}))\psi^{-1}_{U_{1^n,4n}}(u)du.
\end{equation}

\end{proposition}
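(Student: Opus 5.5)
The plan follows Ikeda's argument \cite[\S 1]{I}, adapted to the cover. Let $V=U_{1^n,4n}$ and write $V=U_0\,Y_0\,X_0\,Z_0'$ schematically. Here $X_0$ is the subgroup with entries $(n,j)$ for $n+1\le j\le 2n$ (the $x$-coordinates), $Y_0$ carries the $y$-coordinates (together with the $(n,2n+1)$ entry, which is handled the same way), and $U_0$ contains the center $z$. Then $V/(\text{kernel of } l)$ is the Heisenberg group $\mathcal{H}_{2n+1}$, and $\psi_{U_{1^n,4n}}$ restricted to the center is $\psi_2$ pulled back by $l$. Since $[U_0]$ already contains the center and the remaining coordinates of $U_0$ lie in $\ker l$, the function
\[
h\mapsto \int_{[U_0]}E^{(n)}(f_s)(u_0h)\psi_{U_0}^{-1}(u_0)\,du_0
\]
is, as a function of $h\in \mathcal{H}_{2n+1}(\mathbb{A})\rtimes(\text{embedded }Sp_{2n})$, a $\psi_2$-equivariant automorphic function on the Jacobi group.

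First I would define $\varphi^{\phi_1,\phi_2}$ on $X_0(\mathbb{A})$ (or on $Y_0(\mathbb{A})$ in the dual polarization) as the matrix-coefficient kernel $\varphi^{\phi_1,\phi_2}(x)=\langle\omega_{\psi_2}(l(x))\phi_2,\phi_1\rangle$, suitably Fourier-transformed so that it becomes a function of $x$. The operator $\rho(\varphi^{\phi_1,\phi_2})f_s=\int \varphi^{\phi_1,\phi_2}(x)f_s(\cdot\,x)\,dx$ is then the right convolution along $X_0$. The key identity is Ikeda's lemma: for any $\psi_2$-equivariant automorphic function $F$ on $[\mathcal{H}_{2n+1}]$,
\[
\sum_{\xi\in X(F)} \int_{X_0(\mathbb{A})}\varphi(x)F(\xi\,x\,h)\,dx
=\theta^{\phi_1}(h)\int_{[\mathcal{H}]}\overline{\theta^{\phi_2}(u h)}F(uh)\,du .
\]
This is the Stone--von Neumann statement that the $\psi_2$-isotypic part of $L^2([\mathcal{H}])$ is $\theta\otimes(\text{multiplicity space})$, with the projection to the theta component given by pairing against $\overline{\theta^{\phi_2}}$.

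Second, I would unfold the left side of \eqref{afterikedaleft}. Writing $[U_0]$ and expanding along $X_0(F)\backslash X_0(\mathbb{A})$ gives exactly the sum over $\xi\in X(F)$ of the convolution above. Conversely, expanding the right-hand $du$ integral over $[U_{1^n,4n}]$ into $[U_0]\times[Y_0 X_0]$ and inserting the definition of $\theta^{\phi_2}$ lets me collapse the sum over $X(F)$ with the $[X_0]$ integral to an $X_0(\mathbb{A})$ integral. The resulting Schwartz-kernel integral over $Y_0$ recombines, by the Weil-representation formulas of \cite[\S 1.6]{G-R-S1}, into $\theta^{\phi_1}$ evaluated at $l(y_0)$. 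The $\hat v$ and $t(I_2,\tilde g)$ dependence is then carried along by equivariance: $\hat v$ and $t(I_2,\tilde g)$ normalize $U_{1^n,4n}$, fix its character, and act on $\mathcal{H}_{2n+1}$ through the embedded $Sp_{2n}$. This produces $\omega_{\psi_2}(v\,j(\tilde g))$ on the theta functions, together with the character $\psi_{\hat U_{1^{n-1},2n}}$, which appears on both sides.

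The main obstacle is this last covering-group point. Conjugating by $t(I_2,\tilde g)$ in $Sp^{(n)}_{4n}$ must match the action of $j(\tilde g)\in Sp^{(2)}_{2n}$ on the Weil representation, and the genuine characters of the $n$-fold and double covers must combine consistently; this is the reason $j$ is built via $s(\epsilon)^n$. I would use Remark~\ref{rem:cover interaction}(i) to identify the restriction of $\rho$ with the $SL_2^{(n)}$ cocycle. The conjugation formula in \S\ref{sec:cocycle} then shows that conjugation by $t(I_2,\tilde g)$ on the unipotent $U_{1^n,4n}(\mathbb{A})$ introduces no root-of-unity factor: the conjugate of a unipotent element is again unipotent, and $\rho$ is trivial there. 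Consequently the $\overline{\theta^{\phi_2}}$ factor and the $\theta^{\phi_1}$ factor carry inverse genuine characters, which cancel, and the identity holds as functions on the cover. The remaining convergence questions are routine, since everything is integrated over compact quotients with rapidly decreasing theta functions.
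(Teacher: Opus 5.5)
There is a genuine gap at the central step. The convolution kernel must live on the whole quotient $U_0(\mathbb{A})\backslash U_{1^n,4n}(\mathbb{A})\cong (X\oplus Y)(\mathbb{A})$, not on the Lagrangian $X_0$ alone.

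\emph{Why a kernel on $X_0$ cannot work.} A right convolution along $X_0(\mathbb{A})$ acts on the Schr\"odinger model $\mathcal{S}(X(\mathbb{A}))$ of $\omega_{\psi_2}$ by $\Phi\mapsto\int\varphi(x)\Phi(\cdot+x)\,dx$. This operator commutes with all translations, so it is never of rank one. By your own Stone--von Neumann heuristic, producing $\theta^{\phi_1}(\cdot)\int\overline{\theta^{\phi_2}(\cdot)}\,(\cdots)$ requires exactly the rank-one operator $\Phi\mapsto\langle\Phi,\phi_2\rangle\phi_1$. Only a kernel depending on both $x$ and $y$ (with the central character), such as \eqref{definition of phi}, realizes it.

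\emph{Your key identity is false as written.} The function $F$ is left invariant under $U_{1^n,4n}(F)$, so every term of $\sum_{\xi\in X(F)}F(\xi xh)$ is the same and the sum diverges. The rational sum has to fall on the kernel, not on $F$. With the kernel on all of $(X\oplus Y)(\mathbb{A})$ and no sum over $X(F)$, your identity becomes essentially what the paper establishes for the specific $F$ at hand.

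\emph{The missing mechanism} is the one the paper uses.
\begin{enumerate}
\item In \eqref{convolved}, substitute $u\mapsto(\hat h,\epsilon^2)^{-1}y_0^{-1}u(\hat h,\epsilon^2)$. This moves $y_0$ and $\hat h$ into the argument of $\varphi^{\phi_1,\phi_2}$.
\item The function $J(u)=\int_{[U_0]}E^{(n)}(f_s)(u_0u(\hat h,\epsilon^2))\psi_{U_0}^{-1}(u_0)\,du_0$ is left $U_{1^n,4n}(F)$-invariant. Use this to fold the integral into one over $U_0(\mathbb{A})U_{1^n,4n}(F)\backslash U_{1^n,4n}(\mathbb{A})$ of $\sum_{\gamma'\in U_0(F)\backslash U_{1^n,4n}(F)}\varphi^{\phi_1,\phi_2}(\hat h^{-1}y_0^{-1}\gamma'u\hat h)$.
\item By \cite[Lemma 1.1]{I}, this sum equals $\theta^{\phi_1}(l(y_0)(h,\epsilon^n))\,\overline{\theta^{\phi_2}(l(u)(h,\epsilon^n))}$. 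This is where the factor $\theta^{\phi_1}(l(y_0)\cdots)$ comes from.
\item The $[U_0]$-integral and the folded quotient recombine into $[U_{1^n,4n}]$, with $l(u_0)$ and $\psi_{U_0}$ absorbed into $\psi_{U_{1^n,4n}}$.
\end{enumerate}
Note that $y_0$ is a fixed parameter in the proposition, not an integration variable, and your enlarged $Y_0$ blurs this. So your claim that a $Y_0$-integral ``recombines'' into $\theta^{\phi_1}$ has no mechanism behind it.

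On the covering side, you only need $(\hat h,\epsilon^2)^{-1}(u,1)(\hat h,\epsilon^2)=(\hat h^{-1}u\hat h,1)$. This holds because both sides are homomorphic lifts of $U_{1^n,4n}(\mathbb{A})$, and any homomorphism from this group to $\mu_n$ is trivial. Section~\ref{sec:cocycle}, which concerns $w_0$, plays no role here.
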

\begin{proof}
For $y_0\in Y_0(\mathbb{A}), (h, \epsilon) \in Sp^{(2n)}_{2n}(\mathbb{A})$, let $I(y_0,(h, \epsilon))$ denote the integral
\begin{equation*}
    \int\limits_{[U_0]} E^{(n)}(f_s)(u_0y_0(\hat{h}, \epsilon^2))\psi_{U_0}^{-1}(u_0)du_0.
\end{equation*}
For $u^\prime\in U_{1^n, 4n}(\mathbb{A})$, let $x, y, z$ be its entries in row $n$ as above, so that $l(u^\prime) = (x,y,\tfrac12 z)$, and for $\phi_1, \phi_2 \in \mathcal{S}(X(\mathbb{A}))$ define
\begin{equation}\label{definition of phi}
    \varphi^{\phi_1, \phi_2}(u^\prime) = \int\limits_{X(\mathbb{A})}\phi_1(\xi-\tfrac{x}{2})\overline{\phi_2(\xi+\tfrac{x}{2})} \psi(-z-2\xi J_n\,{}^ty)\:d\xi.
\end{equation}

Define the operator $\rho(\varphi^{\phi_1, \phi_2})$ on $I(y_0,(h, \epsilon))$ by 
\begin{equation}\label{convolved}
    \rho(\varphi^{\phi_1, \phi_2})I(y_0,(h,\epsilon)) = \int\limits_{U_0(\mathbb{A})\backslash U_{1^n,4n}(\mathbb{A})} \varphi^{\phi_1, \phi_2}(u)I(y_0,(h, \epsilon)u)du.
\end{equation}
On the one hand, we can identify $U_0(\mathbb{A})\backslash U_{1^n,4n}(\mathbb{A})$ with $(X\oplus Y)(\mathbb{A})$. Let $U_1$ (resp. $U_2$) be the subgroup of $U_{1^n, 4n}$ that isomorphically maps onto $X$ (resp. $Y$) under $l$. Then \eqref{convolved} can be written as 
\begin{equation*}
    \int\limits_{[U_0]}E^{(n)}(f^{\phi_1, \phi_2}_s) (u_0y_0(\hat{h}, \epsilon^2)) \psi^{-1}_{U_0}(u_0) du_0,
\end{equation*}
where 
\begin{equation*}
    f^{\phi_1, \phi_2}_s = \int\limits_{U_1(\mathbb{A})}\int\limits_{U_2(\mathbb{A})}\varphi^{\phi_1, \phi_2}(u_2u_1)\rho(u_2)\rho(u_1)f_s\: du_1 du_2.
\end{equation*}
On the other hand, we can unfold \eqref{convolved} as in Proposition 1.3 in \cite{I}. For $u \in U_{1^n,4n}(\mathbb{A})$, set
\begin{equation*}
    J(u) = \int\limits_{[U_0]} E^{(n)}(f_s)(u_0\,u\,(\hat{h}, \epsilon^2))\psi^{-1}_{U_0}(u_0)du_0,
\end{equation*}
so that $I(y_0,(h,\epsilon)) = J(y_0)$. In \eqref{convolved}, make the change of variable $u \mapsto (\hat{h},\epsilon^2)^{-1}y_0^{-1}u(\hat{h}, \epsilon^2)$. This is a left translation by $y_0^{-1}$ composed with a conjugation, both of which preserve the coset space $U_0(\mathbb{A})\backslash U_{1^n,4n}(\mathbb{A})$ and its measure. This change of variable turns the translated integral $I(y_0, (h,\epsilon)\,\cdot\,)$ into $J(u)$. We obtain
\begin{align*}
    \rho(\varphi^{\phi_1, \phi_2})I(y_0,(h, \epsilon)) &= \int\limits_{U_0(\mathbb{A})\backslash U_{1^n,4n}(\mathbb{A})} \varphi^{\phi_1, \phi_2}((\hat{h},\epsilon^2)^{-1}y_0^{-1}u(\hat{h}, \epsilon^2))J(u)du\\
    &= \int\limits_{U_0(\mathbb{A})U_{1^n,4n}(F)\backslash U_{1^n,4n}(\mathbb{A})} \sum\limits_{\gamma'\in U_0(F)\backslash U_{1^n, 4n}(F)} \varphi^{\phi_1, \phi_2}((\hat{h},\epsilon^2)^{-1}y_0^{-1}\gamma'u(\hat{h}, \epsilon^2))J(\gamma'u)du.
\end{align*}
Here $J(\gamma'u) = J(u)$ since the rational element $\gamma'$ normalizes $U_0$ and preserves $\psi_{U_0}$, and $E^{(n)}(f_s)$ is left invariant under $(\gamma',1)$. Moreover, since the cocycle $\rho$ is trivial on the upper triangular unipotent matrices $u$, $y_0$ and $\gamma'$, the above is equal to
\begin{equation*}
    \rho(\varphi^{\phi_1, \phi_2})I(y_0,(h, \epsilon)) = \int\limits_{U_0(\mathbb{A})U_{1^n,4n}(F)\backslash U_{1^n,4n}(\mathbb{A})} \sum\limits_{\gamma'\in U_0(F)\backslash U_{1^n, 4n}(F)} \varphi^{\phi_1, \phi_2}(\hat{h}^{-1}y_0^{-1}\gamma'u\hat{h})J(u)du.
\end{equation*}

Lemma 1.1 of \cite{I} says that the inner sum over $\gamma'$ is equal to $\theta^{\phi_1}(l(y_0)(h, \epsilon^n))\overline{\theta^{\phi_2}(l(u)(h, \epsilon^n))}$. Therefore, the above is equal to
\begin{align}
    &\rho(\varphi^{\phi_1, \phi_2})I(y_0,(h, \epsilon)) \notag\\ 
    &= \theta^{\phi_1}(l(y_0)(h, \epsilon^n))\int\limits_{U_0(\mathbb{A})U_{1^n,4n}(F)\backslash U_{1^n,4n}(\mathbb{A})}\overline{\theta^{\phi_2}(l(u)(h, \epsilon^n))}\int\limits_{[U_0]} E^{(n)}(f_s)(u_0u(\hat{h}, \epsilon^2))\psi^{-1}_{U_0}(u_0)du_0 du \notag\\
    &=\theta^{\phi_1}(l(y_0)(h, \epsilon^n))\int\limits_{U_0(\mathbb{A})U_{1^n,4n}(F)\backslash U_{1^n,4n}(\mathbb{A})}\int\limits_{[U_0]}\overline{\theta^{\phi_2}(l(u_0)l(u)(h, \epsilon^n))} E^{(n)}(f_s)(u_0u(\hat{h}, \epsilon^2))\psi^{-1}_{U_{1^n, 4n}}(u_0)du_0 du \notag\\
    &=\theta^{\phi_1}(l(y_0)(h, \epsilon^n)) \int\limits_{[U_{1^n,4n}]}\overline{\theta^{\phi_2}(l(u)(h, \epsilon^n))}E^{(n)}(f_s)(u(\hat{h}, \epsilon^2))\psi^{-1}_{U_{1^n,4n}}(u)du \notag
\end{align}
where in the second step, we combined the action of the Weil representation by $l(u_0)$ and the character $\psi^{-1}_{U_0}(u_0)$ into $\psi^{-1}_{U_{1^n, 4n}}(u)$. The proposition follows by replacing $(h,\epsilon)$ with $\hat{v}i(\tilde{g})$.
\end{proof}

\subsection{Choice of Section}
In this section, we specify the choices of sections that make \eqref{original} equal to \eqref{afterikedaleft}. We will repeatedly use Lemma~\ref{re lemma}.

Let $f^{'''}_s$ be a smooth, holomorphic section as in \eqref{rootexchange2}, and let $\phi_1, \phi_2 \in \mathcal{S}(X(\mathbb{A}))$. Set $f^{\phi_1, \phi_2}_s := \rho(\varphi^{\phi_1,\phi_2})f^{'''}_s$, where $\varphi^{\phi_1,\phi_2}$ is given by \eqref{definition of phi}. By the Dixmier--Malliavin lemma, there exist Schwartz functions $\xi_i \in \mathcal{S}(Y_2(\mathbb{A}))$, $1\leq i\leq r$, and smooth, holomorphic sections $f^{(i)}_s$, such that
\begin{equation*}
f^{\phi_1, \phi_2}_s = \sum_{i=1}^{r}\int\limits_{X_2(\mathbb{A})}\hat{\xi}_i(x)\rho(x)f^{(i)}_s\,dx.
\end{equation*}
Define
\begin{equation}\label{beforesecond}
f^{\prime}_s = \sum_{i=1}^{r}\int\limits_{Y_2(\mathbb{A})}\xi_i(y)\rho(y)f^{(i)}_s\,dy,
\end{equation}
and let $h^{(i)}_s$ denote the $i$-th summand. Again by the Dixmier--Malliavin lemma, there exist Schwartz functions $\xi_{i,j} \in \mathcal{S}(X_1(\mathbb{A}))$, $1\leq i\leq r$, $1\leq j\leq r^{\prime}$, and smooth, holomorphic sections $f^{(i,j)}_s$, such that
\begin{equation*}
\rho(w)h^{(i)}_s = \sum_{j=1}^{r^{\prime}}\int\limits_{Y_1(\mathbb{A})}\hat{\xi}_{i,j}(y)\rho(y)f^{(i,j)}_s\,dy.
\end{equation*}
Define
\begin{equation*}
f_s = \sum_{i=1}^{r}\sum_{j=1}^{r^{\prime}}\int\limits_{X_1(\mathbb{A})}\xi_{i,j}(x)\rho(x)f^{(i,j)}_s\,dx.
\end{equation*}
Then, with the section $f_s$, the integral \eqref{original} is equal to \eqref{expanded} with the section $f^{\phi_1, \phi_2}_s$. 

By Proposition~\ref{ikeda prop}, applied with $f^{'''}_s$ in place of $f_s$, the inner integral of \eqref{expanded} with the section $f^{\phi_1, \phi_2}_s$ is equal to \eqref{afterikedaright} with the section $f^{'''}_s$. Therefore, \eqref{original} is equal to

\begin{equation}\label{afterikeda}
\begin{aligned}
&\int\limits_{[SL_2^{(n)}]} \int\limits_{[U_{1^{n-1}, 2n}]}
\int\limits_{[Y_0]} \varphi^{(n)}(\tilde{g}) \theta^{\phi_1}(l(y_0)vj(\tilde{g}))
\int\limits_{[U_{1^n, 4n}]} \overline{\theta^{\phi_2}(l(u)vj(\tilde{g}))}
E^{(n)}(f^{'''}_s)(u\hat{v}\:t(I_2, \tilde{g})) \psi^{-1}_{U_{1^n, 4n}}(u)\,du \\
&\qquad\qquad \cdot
\psi^{-1}_{Y_0}(y_0)\,
\psi^{-1}_{\hat{U}_{1^{n-1}, 2n}}(v)
\,\,dy_0\,dv\,d\tilde{g}.
\end{aligned}
\end{equation}

\subsection{Collapse of the $Y_0$ Integral}\label{sec:collapse}
In this section, we show that the inner $[Y_0]$ integral is in fact a theta function on $SL^{(2)}_2(\mathbb{A})\cdot \mathcal{H}_3(\mathbb{A})$.

Let $\mathcal{H}_3$ denote the Heisenberg group with three variables $(x,y,z)$ with $x,y,z\in\mathbb{A}$. The product is given by
\begin{equation*}
    (x_1,y_1,z_1)(x_2,y_2,z_2) = \left(x_1+x_2,\ y_1+y_2,\ z_1+z_2+\tfrac12(x_1y_2-y_1x_2)\right).
\end{equation*}
The Weil representation of $\mathcal{H}_3(\mathbb{A})SL_2^{(2)}(\mathbb{A})$ on $\mathcal{S}(\mathbb{A})$ is given by the formulas of \cite[\S 1.6]{G-R-S1} with $k=1$ and the character $\psi^\prime$, where $\psi^\prime(t) = \psi(t/2)$. In particular,
\begin{equation*}
    \omega_{\psi^\prime}((x,y,z))\phi^\prime(\xi) = \psi^\prime(z+\tfrac12 xy+\xi y)\,\phi^\prime(\xi+x).
\end{equation*}
For $\phi^\prime\in\mathcal{S}(\mathbb{A})$, let $\theta^{\phi^\prime}_{2}(h\tilde{k}) = \sum_{\xi\in F}\omega_{\psi^\prime}(h\tilde{k})\phi^\prime(\xi)$ denote the theta series, where $h\in\mathcal{H}_3(\mathbb{A})$ and $\tilde{k}\in SL_2^{(2)}(\mathbb{A})$. We will use the relation $\gamma_{\psi_a}(x) = (a,x)_2\,\gamma_\psi(x)$ between the Weil factors of $\psi$ and of $\psi_a(t) = \psi(at)$ \cite[\S 2.8, (2.68)]{K}. Since $(2,x)_2 = (\tfrac12,x)_2$, it gives $\gamma_{\psi_2} = \gamma_{\psi^\prime}$.

Let $H$ be the subgroup of $U_{1^{n-1},2n}$ generated by the root subgroups at the entries $(n-1,n)$, $(n-1,n+1)$ and $(n-1,n+2)$. For $h_1,h_2\in H$ with entries $(x_i,y_i,z_i)$ at these three positions, the entries of $h_1h_2$ are $(x_1+x_2,\ y_1+y_2,\ z_1+z_2+x_1y_2-y_1x_2)$. 

Define a map $l^\prime$ from $U_{1^{n-1},2n} (\mathbb{A})$ to $\mathcal{H}_3(\mathbb{A})$ given by 
\begin{equation*}
    l^\prime(v) = \left(v_{n-1,n},\ v_{n-1,n+1},\ \tfrac12 v_{n-1,n+2}\right)\in\mathcal{H}_3(\mathbb{A}),\qquad v\in U_{1^{n-1},2n}(\mathbb{A}),
\end{equation*}
where $v_{i,j}$ denotes the $(i,j)$ entry of $v$.  It is easy to check that $l^\prime$ is a homomorphism on $U_{1^{n-1},2n}(\mathbb{A})$. The middle $SL_2$, embedded in rows and columns $n$, $n+1$, normalizes $H$, and $l^\prime$ is equivariant for its conjugation action. By Remark~\ref{rem:cover interaction}(i), the restriction of $Sp^{(2)}_{2n}(\mathbb{A})$ to the middle $SL_2$ is $SL_2^{(2)}(\mathbb{A})$, and we regard $j(\tilde{g})$ as an element of this group.

\begin{lemma}\label{lem:collapse}
Let $\phi_1\in\mathcal{S}(X(\mathbb{A}))$, and define $\phi^\prime_1\in\mathcal{S}(\mathbb{A})$ by $\phi^\prime_1(\xi) = \phi_1(0,\dots,0,\tfrac12,\tfrac12\xi)$. Then, for $v\in U_{1^{n-1},2n}(\mathbb{A})$ and $\tilde{g}\in SL_2^{(n)}(\mathbb{A})$,
\begin{equation*}
    \int\limits_{[Y_0]}\theta^{\phi_1}(l(y_0)vj(\tilde{g}))\psi^{-1}_{Y_0}(y_0)\,dy_0 = \theta^{\phi^\prime_1}_{2}(l^\prime(v)j(\tilde{g})).
\end{equation*}
\end{lemma}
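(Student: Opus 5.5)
The plan is to expand the theta series, apply the Weil representation operator of $y_0$ explicitly, and integrate termwise over $[Y_0]$. This collapses the sum over $\xi\in X(F)$ onto a one-parameter family of $\xi$, and that family then gets reorganized as the theta series $\theta_2^{\phi'_1}$.

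First, recall that $Y_0$ consists of the entries $(n,j)$ with $2n+2\le j\le 3n$. Under $l$ these correspond to the coordinates $y_2,\dots,y_n$ of $Y(\mathbb{A})$, with $x=0$ and $z=0$. By the Schrödinger model formulas of \cite[\S 1.6]{G-R-S1},
\[
\omega_{\psi_2}(l(y_0))\phi(\xi)=\psi_2(\xi J_n\,{}^ty)\,\phi(\xi)=\psi\bigl(2\xi J_n\,{}^ty\bigr)\phi(\xi)
\]
for $\phi\in\mathcal{S}(X(\mathbb{A}))$. Since $J_n$ pairs $\xi_k$ with $y_{n+1-k}$, the phase is $\psi(2\sum_{k=1}^{n-1}\xi_k y_{n+1-k})$; the coordinate $\xi_n$ pairs with $y_1$, which is absent from $Y_0$. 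The character $\psi_{Y_0}$ is nontrivial only at the entry $(n,2n+2)$, i.e. on $y_2$, which pairs with $\xi_{n-1}$. Writing $\Phi=\omega_{\psi_2}(vj(\tilde g))\phi_1$, we have
\[
\theta^{\phi_1}(l(y_0)vj(\tilde g))=\sum_{\xi\in X(F)}\psi\bigl(2\textstyle\sum_k\xi_ky_{n+1-k}\bigr)\Phi(\xi).
\]
Integrating over $[Y_0]=(F\backslash\mathbb{A})^{n-1}$ against $\psi^{-1}(y_2)$ retains exactly those $\xi$ with $2\xi_{n-1}=1$ and $\xi_1=\dots=\xi_{n-2}=0$. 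What remains is
\[
\sum_{\eta\in F}\Phi(0,\dots,0,\tfrac12,\eta).
\]
Here I must double-check the sign and the normalization of $\psi_{Y_0}$ coming out of the conjugation by $w$ and the root exchanges. I expect the coefficient to be $1$, which is consistent with the statement's $\tfrac12$ in $\phi'_1$.

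Next, I compute how $v\in U_{1^{n-1},2n}(\mathbb{A})$ and the middle $SL_2^{(2)}$ act on functions restricted to the slice $(0,\dots,0,\tfrac12,\cdot)$. Write $v=h\,v'$ with $h\in H$ and $v'$ in the complementary subgroup generated by the remaining roots. The subgroup $U_{1^{n-1},2n}\subset Sp_{2n}$ acts linearly through its Levi-like part on the $x$-coordinates, and through the mixing and Heisenberg-type parts via characters. On the slice $\xi=(0,\dots,0,\tfrac12,\eta)$, the roots other than those in $H$ should act by translations that fix the slice, or by characters that vanish there. For example, the roots $(i,j)$ with $i\le n-2$ move $\xi_i$ by multiples of $\xi_{<i}=0$, or produce phases $\psi(\xi\cdot\ldots)$ that involve only the zero coordinates. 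The character $\psi_{\hat U_{1^{n-1},2n}}$ is not in this lemma, so $v'$ should act trivially on the slice. Then I check the roots in $H$:
\begin{itemize}
\item The root $(n-1,n)$ translates $\eta\mapsto\eta+\tfrac12 x$.
\item The root $(n-1,n+1)$ gives the phase $\psi(2\cdot\tfrac12\eta y)$ together with a quadratic term.
\item The root $(n-1,n+2)$ gives $\psi(2\cdot\tfrac14 z)$.
\end{itemize}
Matching these with $\omega_{\psi'}(l'(v))\phi'_1(\eta')$ after the substitution $\eta=\tfrac12\eta'$ should give exactly the formula $\psi'(z'+\tfrac12 x'y'+\eta' y')\phi'(\eta'+x')$ with $l'(v)=(x,y,\tfrac12 z)$. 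The factor $\psi'=\psi(\cdot/2)$ arises from $\psi_2$ combined with the two factors of $\tfrac12$.

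For $j(\tilde g)$ in the middle $SL_2^{(2)}$, acting on the coordinate $\xi_n$ only: the restriction to the coordinate $\xi_n$ of the Weil representation $\omega_{\psi_2}$ of $Sp^{(2)}_{2n}$ is the one-variable Weil representation $\omega_{\psi_2}$ of $SL_2^{(2)}$, acting on $\eta$. The coordinate $\xi_{n-1}=\tfrac12$ is untouched, since that $SL_2$ occupies rows $n,n+1$. Rescaling $\eta=\tfrac12\eta'$ converts $\omega_{\psi_2}$ into $\omega_{\psi_{1/2}}=\omega_{\psi'}$ up to the Weil factor change, and $\gamma_{\psi_2}=\gamma_{\psi'}$ as noted before the lemma. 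The torus and unipotent formulas then match directly; for the Weyl element I will compare Fourier transforms, whose self-dual measures scale consistently.

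Because both sides are determined by the generators (the unipotent, torus and Weyl elements together with $H$), the identity $\sum_\eta\Phi(0,\dots,\tfrac12,\eta)=\theta^{\phi'_1}_2(l'(v)j(\tilde g))$ follows. The main obstacle is the middle step: verifying that every root of $U_{1^{n-1},2n}$ outside $H$ acts trivially on the slice, given the Heisenberg normalization with the $\tfrac12$'s in $l$. A secondary obstacle is checking the Weyl element action under the rescaling, including the genuine covering factors $s(\epsilon)^n$. If needed, I will reduce the Weyl element case to its unipotent and torus cases via the Bruhat decomposition.
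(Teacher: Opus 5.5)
Your proposal follows essentially the same route as the paper: integrate out $[Y_0]$ to isolate the line $\xi=(0,\dots,0,\tfrac12,\eta)$, show that only the entries $v_{n-1,n},v_{n-1,n+1},v_{n-1,n+2}$ act on this line, substitute $\zeta=2\eta$, and use that the dilation $\phi\mapsto\phi(\tfrac12\,\cdot)$ intertwines $\omega_{\psi_2}$ and $\omega_{\psi'}$ on the middle $SL_2^{(2)}$. The paper removes your ``main obstacle'' in one stroke by writing $v=\mathrm{diag}(A,A^\ast)\left(\begin{smallmatrix}I_n&T\\&I_n\end{smallmatrix}\right)$ and evaluating $\psi\bigl(\xi ATJ_n\,{}^t(\xi A)\bigr)\Phi'(\xi A)$ on the line; note also one small slip: the entry $z$ at $(n-1,n+2)$ contributes $\psi_2(\tfrac12\cdot\tfrac14 z)=\psi(\tfrac14 z)=\psi'(\tfrac12 z)$, not $\psi(\tfrac12 z)$, and this is exactly what matches $l'(v)=(x,y,\tfrac12 z)$.
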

\begin{proof}
For $y_0\in Y_0(\mathbb{A})$ we have $l(y_0) = (0,y,0) \in \mathcal{H}_{2n+1} (\mathbb{A})$ with $y = (0,y_2,\dots,y_n)$, where $y_i = (y_0)_{n,2n+i}$, and $\psi_{Y_0}(y_0) = \psi(y_2)$. Put $\Phi = \omega_{\psi_2}(vj(\tilde{g}))\phi_1$. By \cite[\S 1.6]{G-R-S1}, $(0,y,0)$ acts on $\Phi$ by the character $\psi_2(\xi J_n\,{}^ty) = \psi(2\xi_{n-1}y_2+2\xi_{n-2}y_3+\dots+2\xi_1y_n)$, $\xi = (\xi_1,\dots,\xi_n)$. Hence the left hand side is equal to
\begin{equation*}
    \sum_{\xi\in F^n}\Phi(\xi)\int\limits_{(F\backslash\mathbb{A})^{n-1}}\psi\big((2\xi_{n-1}-1)y_2+2\xi_{n-2}y_3+\dots+2\xi_1y_n\big)\,dy_2\cdots dy_n = \sum_{\xi_n\in F}\Phi(\tfrac12e_{n-1}+\xi_ne_n),
\end{equation*}
where $e_{n-1} = (0,\dots,0,1,0)$ and $e_n = (0,\dots,0,1)$ in $F^n$. The coordinate $\xi_n$ is not constrained because the entry $(n,2n+1)$ does not belong to $Y_0$.

Next we compute $\Phi$ on the line $\tfrac12e_{n-1}+\xi_ne_n$. Write $v = \text{diag}(A,A^\ast)\begin{pmatrix}I_n&T\\&I_n\end{pmatrix}$ with $A$ upper triangular unipotent. By \cite[\S 1.6]{G-R-S1} and $\det A = 1$, for any Schwartz function $\Phi^\prime\in\mathcal{S}(X(\mathbb{A}))$,
\begin{equation*}
    (\omega_{\psi_2}(v)\Phi^\prime)(\xi) = \psi_2\big(\tfrac12\,\xi ATJ_n\,{}^t(\xi A)\big)\,\Phi^\prime(\xi A) = \psi\big(\xi ATJ_n\,{}^t(\xi A)\big)\,\Phi^\prime(\xi A).
\end{equation*}
At $\xi = \tfrac12e_{n-1}+\xi_ne_n$, we have $\xi A = \tfrac12e_{n-1}+(\xi_n+\tfrac12v_{n-1,n})e_n$. Using the fact that $v_{n,n+1} = 0$, we get
\begin{equation*}
    (\omega_{\psi_2}(v)\Phi^\prime)(\tfrac12e_{n-1}+\xi_ne_n) = \psi\big(\tfrac14v_{n-1,n+2}+\tfrac14v_{n-1,n}v_{n-1,n+1}+\xi_nv_{n-1,n+1}\big)\,\Phi^\prime\big(\tfrac12e_{n-1}+(\xi_n+\tfrac12v_{n-1,n})e_n\big).
\end{equation*}
Put $\zeta = 2\xi_n$ and $f(\zeta) = \Phi^\prime(\tfrac12e_{n-1}+\tfrac12\zeta e_n)$. Then the right hand side is
\begin{equation*}
    \psi^\prime\big(\tfrac12v_{n-1,n+2}+\tfrac12v_{n-1,n}v_{n-1,n+1}+\zeta v_{n-1,n+1}\big)\,f(\zeta+v_{n-1,n}) = (\omega_{\psi^\prime}(l^\prime(v))f)(\zeta).
\end{equation*}
Finally, the middle $SL_2^{(2)}(\mathbb{A})$ acts through the Weil representation $\omega_{\psi_2}$ on the last variable only, so $(\omega_{\psi_2}(j(\tilde{g}))\phi_1)(\tfrac12e_{n-1}+\xi_ne_n) = (\omega_{\psi_2}(j(\tilde{g}))\phi_1^\flat)(\xi_n)$, where $\phi_1^\flat(\xi_n) = \phi_1(\tfrac12e_{n-1}+\xi_ne_n)$. The dilation $(D\phi)(\zeta) = \phi(\tfrac12\zeta)$ of $\mathcal{S}(\mathbb{A})$ intertwines the restrictions of $\omega_{\psi_2}$ and $\omega_{\psi^\prime}$ to $SL_2^{(2)}(\mathbb{A})$. Since $D\phi_1^\flat = \phi^\prime_1$, we get $(\omega_{\psi_2}(j(\tilde{g}))\phi_1^\flat)(\tfrac12\zeta) = (\omega_{\psi^\prime}(j(\tilde{g}))\phi^\prime_1)(\zeta)$. Substituting $\Phi^\prime$ with $\omega_{\psi_2}(j(\tilde{g}))\phi_1$ and summing over $\zeta\in F$, which is the same as summing over $\xi_n\in F$, gives the lemma.
\end{proof}

By Lemma~\ref{lem:collapse}, \eqref{afterikeda} can be rewritten as
\begin{equation}\label{aftercollapse}
\begin{aligned}
&\int\limits_{[SL_2^{(n)}]} \int\limits_{[U_{1^{n-1}, 2n}]}
\varphi^{(n)}(\tilde{g})\, \theta^{\phi^\prime_1}_{2}(l^\prime(v)j(\tilde{g}))
\int\limits_{[U_{1^n, 4n}]} \overline{\theta^{\phi_2}(l(u)vj(\tilde{g}))}
E^{(n)}(f^{'''}_s)(u\hat{v}\:t(I_2, \tilde{g})) \psi^{-1}_{U_{1^n, 4n}}(u)\,du \\
&\qquad\qquad \cdot
\psi^{-1}_{\hat{U}_{1^{n-1}, 2n}}(v)
\,\,dv\,d\tilde{g},
\end{aligned}
\end{equation}
with $\phi^\prime_1(\xi) = \phi_1(0,\dots,0,\tfrac12,\tfrac12\xi)$. To complete the proof of Theorem~\ref{Theorem}, we need to show that the inner $[U_{1^n, 4n}]$ integral is an Eisenstein series on $Sp^{(2n)}_{2n}(\mathbb{A})$. We will prove it in the next section.

\section{Fourier--Jacobi Descent}\label{sec:FJ descent}
To complete the proof of Theorem~\ref{Theorem}, we need to show that the integral 
\begin{equation}\label{beforedescent}
    \mathcal{E}((h, \epsilon)) = \int\limits_{[U_{1^n, 4n}]}\theta^{\phi}(l(u)(h, \epsilon^n))E^{(n)}(f_s)(u (\hat{h}, \epsilon^2)) \psi^{-1}_{U_{1^n, 4n}}(u) du 
\end{equation}
as a function of $\bar{h} = (h, \epsilon)\in Sp^{(2n)}_{2n}(\mathbb{A})$ is an Eisenstein series corresponding to a section of $\text{Ind}_{Q_n(\mathbb{A})}^{Sp^{(2n)}_{2n}(\mathbb{A})}\, \Theta_{GL_n^{(n)}(\mathbb{A})}\gamma_{\psi}|\det\cdot|^{s}$. Here $\hat{h}$ denotes $\text{diag}(I_n,h,I_n)$, and $\gamma_{\psi}$ is the Weil factor attached to $\psi$, for whose definition and properties we refer to \cite[\S 2.8]{K}.

For $\text{Re}(s)$ large, the series defining $E^{(n)}(f_s)$ converges absolutely, uniformly on compact sets, while the theta series is continuous on the compact quotient $[U_{1^n, 4n}]$. Hence, we may exchange the summation and the integral and obtain
\begin{equation}\label{expandeisen}
   \sum\limits_{\gamma \in Q_{2n}(F) \backslash Sp_{4n}(F)} \: \int\limits_{[U_{1^n, 4n}]}\theta^{\phi}(l(u)(h, \epsilon^n))f_s(\gamma u (\hat{h}, \epsilon^2)) \psi^{-1}_{U_{1^n, 4n}}(u) du.
\end{equation}

Here and in the rest of this section, we identify $Sp_{4n}(F)$ with its image in $Sp^{(n)}_{4n}(\mathbb{A})$ under the splitting $\gamma \mapsto (\gamma, \eta^{-1}(\gamma))$ of the rational points. In particular, rational elements such as $\gamma$ and $w_0$ denote their images under this splitting. 

Consider the double coset space
\begin{equation*}
Q_{2n}(F) \backslash Sp_{4n}(F) / U_{1^n, 4n}(F)\widehat{Sp_{2n}}(F),
\end{equation*}
where $\widehat{Sp_{2n}}(F)$ denotes the embedded copy of $Sp_{2n}$ in $Sp_{4n}$ by $g\mapsto \text{diag}(I_n, g, I_n)$. By the Bruhat decomposition, all the representatives in the double coset space can be written in the form
\begin{equation*}
    \begin{pmatrix}
        \epsilon_1&&\epsilon_2\\&I_{2n}&\\ \epsilon_3&&\epsilon_4
    \end{pmatrix},
\end{equation*}
where $\epsilon_1$ and $\epsilon_4$ are $n\times n$ diagonal matrices with 1 or 0 on the diagonal. $\epsilon_2$ (resp. $\epsilon_3$) are anti-diagonal matrices with 1 (resp. -1) or 0 on the anti-diagonal. %(note to myself: it can be written as a product of the Weyl group elements that exchanges $x_i$ and $y_i$ for any $1\leq i \leq n$.)

\begin{lemma} 
The only representative that contributes to \eqref{expandeisen} is $\gamma = \begin{pmatrix}
    0&&J_n \\ &I_{2n}& \\ -J_n&&0
\end{pmatrix}$.
\end{lemma}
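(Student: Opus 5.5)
We will show that every other representative $\gamma$ contributes zero to \eqref{expandeisen}. First we parametrize the representatives. Let $a$ be the number of ones on the diagonal of $\epsilon_1$, so that $\epsilon_2,\epsilon_3$ have rank $n-a$. The contribution of $\gamma$ is obtained by factoring the sum over $U_{1^n,4n}(F)\widehat{Sp_{2n}}(F)$ through the stabilizer $\gamma^{-1}Q_{2n}(F)\gamma\cap U_{1^n,4n}(F)\widehat{Sp_{2n}}(F)$. We then collapse the rational unipotent part of the stabilizer into the $du$ integral. Because the Heisenberg group surjects onto the theta variables, the $[U_{1^n,4n}]$ integration combined with $\theta^\phi$ behaves like a Fourier coefficient. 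The diagonal $n\times n$ pattern of $\epsilon_1$ is determined by which coordinates $x_i$ of the Heisenberg part are sent by $\gamma$ into the Levi $GL_{2n}$ and which into the unipotent radical of $Q_{2n}$.

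\emph{Case 1: $\epsilon_1$ has a one in position $i$ with $i<n$.} The plan is to find a root subgroup $N_\alpha\subset U_{1^n,4n}$ on which $\psi_{U_{1^n,4n}}$ is nontrivial (a simple root $(i,i+1)$ with $i<n$), such that $\gamma N_\alpha\gamma^{-1}$ lies in the unipotent radical of $Q_{2n}$ and does not meet the Heisenberg coordinates. The theta function and $f_s$ are then left invariant under $N_\alpha(\mathbb{A})$. Integrating over $[N_\alpha]$ against the nontrivial character gives zero. Choosing $i$ as the largest index with $(\epsilon_1)_{ii}=1$ and taking $\alpha=(i,i+1)$ should work whenever $(\epsilon_1)_{i+1,i+1}=0$, because $\gamma$ then moves row $i$ into the Levi block and row $i+1$ into the opposite block.

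\emph{Case 2: the ones of $\epsilon_1$ form an initial segment $1,\dots,a$ with $0<a\le n$.} Here $\gamma$ conjugates a unipotent subgroup of $U_{1^n,4n}$ into the Siegel Levi $GL_{2n}$. The character $\psi_{U_{1^n,4n}}$ restricted to that subgroup then produces a Fourier coefficient of $\Delta(\Theta_{GL^{(n)}_n},2)$ along a unipotent orbit. The theta function contributes at most a character in the $x$-directions after we expand it. We will identify this orbit as $(k,1^{2n-k})$ with $k>2$, or something not comparable with $(n^2)$ when $n\ge3$, so that the coefficient vanishes by the property of the generalized Speh representation recalled in Section~\ref{sec:speh}. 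If the orbit is instead comparable, the plan is to show that a further integration over a constant term of $\Delta$ along a unipotent radical of $GL_{2n}$ appears. By cuspidality-type vanishing of the theta representation's constant terms, specifically the non-genericity beyond $(n^2)$ \cite[\S 3]{K}, this term vanishes as well.

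The main obstacle is Case 2. There we must carefully track how the theta series mixes with the character and compute the resulting orbit. We must also verify that the cocycle $\rho$ introduces no extra factor when conjugating by $\gamma$. For this we will use the splitting on rational points together with the triviality of $\rho$ on unipotent subgroups. For $\gamma$ equal to $\begin{pmatrix}0&&J_n\\&I_{2n}&\\-J_n&&0\end{pmatrix}$, that is $a=0$, none of these vanishing mechanisms applies. This confirms that this representative is the only one contributing.
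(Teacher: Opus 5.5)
Your Case 1 is correct, and it is exactly the inductive step of the paper. If $\epsilon_1(i)=1$ and $\epsilon_1(i+1)=0$ for some $i<n$, then $\gamma$ conjugates $I_{4n}+xe'_{i,i+1}$ into the root subgroup at $(i,4n-i)$, which lies in the unipotent radical $U_{2n}$ of $Q_{2n}$. That root has trivial image under $l$, and $\psi_{U_0}$ is nontrivial on it. Taking $i$ to be the largest index with $\epsilon_1(i)=1$ therefore disposes of every nonzero $\epsilon_1$ with $\epsilon_1(n)=0$. This includes every initial segment $1,\dots,a$ with $a<n$, so your Case 2 is mislabeled. The representatives left over are those with $\epsilon_1(n)=1$, and some of them, such as $\epsilon_1=\mathrm{diag}(0,1,1)$ for $n=3$, fall in neither of your cases.

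The genuine gap is that this remaining case is not proved, and the mechanism you propose for it cannot work. Take $\gamma=I_{4n}$. The part of $U_{1^n,4n}$ inside the Siegel Levi carries only the character $\psi(\sum_{i<n}u_{i,i+1})$, together with the theta dependence on the $x$-coordinates. This is attached to an orbit like $(n,1^n)$, which lies below $(n^2)$, so the orbit property of $\Delta(\Theta_{GL^{(n)}_n(\mathbb{A})},2)$ gives no vanishing. Your candidate $(k,1^{2n-k})$ is also comparable with $(n^2)$ whenever $k\le n$. Nor is there any ``cuspidality-type'' vanishing of constant terms to fall back on. $\Delta(\Theta_{GL^{(n)}_n(\mathbb{A})},2)$ is residual, and its constant term along the $(n,n)$ parabolic is nonzero; the paper uses exactly that constant term to identify the inducing data of $\lambda(f_s,\phi)$.

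The missing idea lies in the Heisenberg coordinates you deliberately excluded. The root subgroup $u(x)=I_{4n}+xe'_{n,3n+1}$ lies in $U_0$, and $l(u(x))$ is central in $\mathcal{H}_{2n+1}$. Hence the Weil representation action and $\psi^{-1}_{U_{1^n,4n}}$ combine to the nontrivial character $\psi^{-1}_{U_0}(u(x))$. If $\epsilon_1(n)=1$, then $\gamma$ fixes $e_n$ and $e_{3n+1}$, so $\gamma u(x)\gamma^{-1}=u(x)\in U_{2n}(\mathbb{A})$. Thus $f_s$ is unchanged under $u\mapsto u(x)u$, and integrating $x$ over $F\backslash\mathbb{A}$ kills the term.

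With this, the paper's proof is a downward induction. First $\epsilon_1(n)=0$ by the central root. Then $\epsilon_1(m)=0$ forces $\epsilon_1(m-1)=0$ via the root $(m-1,m)$, which is your Case 1. Hence $\epsilon_1=0$, and the symplectic condition forces the stated $\gamma$.
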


\begin{proof}
For any representative $\gamma$ of the double coset, if there is a root subgroup $\{u(x) : x \in \mathbb{A}\}$ of $U_0(\mathbb{A})$ such that $l(u(x))$ is in the center of $\mathcal{H}_{2n+1}(\mathbb{A})$, $\gamma u(x) \gamma^{-1} \in U_{2n}(\mathbb{A})$ (the unipotent radical of $Q_{2n}$) and $\psi_{U_0}(u(x)) \neq 1$, then $\gamma$ contributes zero to the integral \eqref{expandeisen}. In fact, the change of variables $u \mapsto u(x)u$, with $x$ integrated over $F\backslash\mathbb{A}$, leaves the factor $f_s(\gamma u (\hat{h}, \epsilon^2))$ unchanged. The Weil representation action of $l(u(x))$ and the character $\psi^{-1}_{U_{1^n, 4n}}(u(x))$ combine to $\psi^{-1}_{U_0}(u(x))$. Therefore, the term of \eqref{expandeisen} corresponding to $\gamma$ is multiplied by $\int_{F\backslash\mathbb{A}}\psi^{-1}_{U_0}(u(x))\,dx = 0$.
%(\textbf{To remind myself}: $\psi_{U_0}$ is the restriction of $\psi_{D_2}$ on $U_0$. The relevant entries are $(i,i+1)$ for $1\leq i \leq n-1$ and $(n, 3n+1)$ (the entry that corresponds to the $z$ coordinate of $\mathcal{H}_{2n+1}$)).

Let $\epsilon_1(i)$ denote the $(i,i)$ entry of $\epsilon_1$. We inductively show that $\epsilon_1(i)$ is zero for all $1\leq i \leq n$. If $\epsilon_1(n) = 1$, set $u = I_{4n} + xe^{\prime}_{n,3n+1}$ for some $x$ such that $\psi(x) \neq 1$. Then $\gamma u \gamma^{-1} =u$ is in $U_{2n}(\mathbb{A})$ and $\psi_{U_0}(u) \neq 1$. Therefore, such $\gamma$ contributes zero by the above argument. Now assume $\epsilon_1(m) = 0$ for $1 < m \leq n$. Suppose $\epsilon_1(m-1)=1$. Set $u = I_{4n} + xe^{\prime}_{m-1,m}$. Then $\gamma u \gamma^{-1} = I_{4n} + xe^{\prime}_{m-1,4n-m+1}$ is in $U_{2n}(\mathbb{A})$ and $\psi_{U_0}(u) \neq 1$. By induction, $\epsilon_1 = 0$. Once $\epsilon_1 = 0$, the symplectic condition implies that $\epsilon_2 = J_n$, $\epsilon_3 = -J_n$, and $\epsilon_4 = 0$. The lemma follows.
\end{proof}

Instead of using the above $\gamma$, we will use, for convenience, the following representative of the same double coset:
\begin{equation*}
w_0 = \begin{pmatrix}
    &I_n&&\\&&&I_n\\-I_n&&&\\&&I_n&
\end{pmatrix}.
\end{equation*}

We make a factorization $U_{1^n, 4n} = R\hat{B}U^1$. The group $\hat{B}$ is the subgroup of $U_{1^n, 4n}$ which maps isomorphically onto $0\oplus Y \oplus 0\subset \mathcal{H}_{2n+1}(\mathbb{A})$ under the homomorphism $l$ introduced in Section~\ref{sec:main theorem}. The group $U^1$ is of the form 
\begin{equation*}
    \begin{pmatrix}
        I_n&a&0&Z\\&I_n&0&0\\&&I_n&a^\ast\\&&&I_n
    \end{pmatrix}.
\end{equation*}

Let $z$ denote the $(n,1)$ entry of $Z$, that is, the $(n, 3n+1)$ entry of $u_1$. Define a character on $U^1$ by $\psi_{U^1}(u_1)=\psi(z)$. The group $R$ is the subgroup of $U_{1^n, 4n}$ complementary to $U^1$ and $\hat{B}$; its elements have the form
\begin{equation*}
\begin{pmatrix}
    U&0&b&0\\&I_n &0&b^\ast\\&&I_n&0\\&&&U^\ast
\end{pmatrix},
\end{equation*}
where $U$ is an arbitrary upper triangular unipotent $n \times n$ matrix and $b$ is arbitrary except that its last row is zero. Define a character $\psi_{R}$ on $R$ by $\psi_{R}(r) = \psi(\sum_{i=1}^{n-1} U_{i, i+1})$. 

With this factorization, \eqref{expandeisen} becomes
\begin{equation*}
    \sum\limits_{\gamma \in Q_n(F)\backslash Sp_{2n}(F)} \int\limits_{U^1(\mathbb{A})} \int\limits_{[\hat{B}]} \int\limits_{[R]} \theta^{\phi}(l(r) (0,b,0) l(u_1)(g, \epsilon^n))f_s(w_0 r \hat{b} u_1 \hat{\gamma}(\hat{ g}, \epsilon^2)) \psi^{-1}_{U^1}(u_1) \psi^{-1}_{R}(r) dr db du_1.
\end{equation*}

We make a change of variable $r \mapsto r^\prime =w_0^{-1}rw_0$. Since $r^\prime$ has trivial image under $l$, we obtain
\begin{equation*}
    \sum\limits_{\gamma \in Q_n(F)\backslash Sp_{2n}(F)} \int\limits_{U^1(\mathbb{A})} \int\limits_{[\hat{B}]} \int\limits_{[R^\prime]} \theta^{\phi}((0,b,0) l(u_1)(g, \epsilon^n))f_s(r^\prime w_0 \hat{b} u_1 \hat{\gamma}(\hat{g}, \epsilon^2)) \psi^{-1}_{U^1}(u_1) \psi^{-1}_{R^\prime}(r^\prime) dr^\prime db du_1,
\end{equation*}
where $R^\prime$ denotes $w_0^{-1}Rw_0$ and $\psi_{R'}(r') = \psi_{R}(w_0r'w_0^{-1})$.

Let $\iota$ denote the embedding of the full upper unipotent subgroup $U$ of $GL^{(n)}_{2n}$ into $Sp^{(n)}_{4n}$ given by $(u, 1)\mapsto \left(\begin{pmatrix}
    u&0\\0&u^\ast
\end{pmatrix} , 1 \right)$. Then $r^\prime$ can be written as 
\begin{equation*}
    r^\prime = \iota\begin{pmatrix}
1&&&&&x_{1,n+2}&\cdots&x_{1,2n}\\
&1&&&&x_{2,n+2}&\cdots&x_{2,2n}\\
&&\ddots&&&\vdots&\ddots&\vdots\\
&&&1&&x_{n,n+2}&\cdots&x_{n,2n}\\
&&&&1&x_{n+1,n+2}&\cdots&x_{n+1,2n}\\
&&&&&1&\ddots&\vdots\\
&&&&&&\ddots&x_{2n-1,2n}\\
&&&&&&&1
\end{pmatrix},
\end{equation*}
and $\psi_{R^\prime}(r^\prime) = \psi^{-1}(\sum_{i=n+1}^{2n-1}x_{i, i+1})$.

\begin{lemma} \label{lem:addacolumn}
For $h\in Sp^{(n)}_{4n}(\mathbb{A})$, we have 
\begin{multline}\label{addacolumn}
\int\limits f_s\left( \iota\begin{pmatrix}
1&&&&&x_{1,n+2}&\cdots&x_{1,2n}\\
&1&&&&x_{2,n+2}&\cdots&x_{2,2n}\\
&&\ddots&&&\vdots&\ddots&\vdots\\
&&&1&&x_{n,n+2}&\cdots&x_{n,2n}\\
&&&&1&x_{n+1,n+2}&\cdots&x_{n+1,2n}\\
&&&&&1&\ddots&\vdots\\
&&&&&&\ddots&x_{2n-1,2n}\\
&&&&&&&1
\end{pmatrix}h\right)\psi(\sum\limits_{i=n+1}^{2n-1}x_{i, i+1}) dx \\
= \int\limits f_s\left( \iota \begin{pmatrix}
1&&&&x_{1,n+1}&x_{1,n+2}&\cdots&x_{1,2n}\\
&1&&&x_{2,n+1}&x_{2,n+2}&\cdots&x_{2,2n}\\
&&\ddots&&\vdots&\vdots&\ddots&\vdots\\
&&&1&x_{n,n+1}&x_{n,n+2}&\cdots&x_{n,2n}\\
&&&&1&x_{n+1,n+2}&\cdots&x_{n+1,2n}\\
&&&&&1&\ddots&\vdots\\
&&&&&&\ddots&x_{2n-1,2n}\\
&&&&&&&1
\end{pmatrix}h\right)\psi(\sum\limits_{i=n+1}^{2n-1}x_{i, i+1}) dx.
\end{multline}
In both integrals, every variable $x_{i,j}$ is integrated over $F\backslash\mathbb{A}$. The right hand side carries the extra integration over the new $(n+1)$-st column.
\end{lemma}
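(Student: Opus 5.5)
The plan is to prove \eqref{addacolumn} by a Fourier expansion along the new column, and to show that every non-constant Fourier coefficient vanishes by the orbit property of $\Delta(\Theta_{GL^{(n)}_n(\mathbb{A})},2)$ recalled in Section~\ref{sec:speh}.

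First I reduce to a statement about automorphic forms on the Levi cover. Since $\iota$ maps into the Siegel Levi and $f_s$ is a section induced from $\Delta(\Theta_{GL^{(n)}_n(\mathbb{A})},2)|\det|^s$, the function $m\mapsto f_s(\iota(m)h)$ on $GL^{(n)}_{2n}(\mathbb{A})$, for fixed $h$, is an automorphic form in that representation. It is left invariant under $GL_{2n}(F)$, embedded through the rational splitting. Because the unipotent matrices are upper triangular, $\rho$ is trivial on them. Hence both sides of \eqref{addacolumn} are Fourier coefficients of this automorphic form along unipotent subgroups $V\subset V^+$ of $GL_{2n}$. Here $V^+=VC$, where $C\cong F^n$ is the abelian group of entries $(i,n+1)$ with $1\le i\le n$, and $C$ is normalized by $V$.

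Next I expand the left side of \eqref{addacolumn} in a Fourier series along $[C]$. The character on $V$ is trivial on the entries in rows $1,\dots,n$, and it only involves $x_{i,i+1}$ for $i\ge n+1$. Therefore $C$ preserves it, and the expansion is legitimate. It is indexed by $a\in F^n$, giving characters $\psi\bigl(\sum_i a_ix_{i,n+1}\bigr)$. The term $a=0$ is exactly the right side of \eqref{addacolumn}.

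For $a\neq 0$ I choose $\gamma\in GL_n(F)$, embedded as $\mathrm{diag}(\gamma,I_n)$ in $GL_{2n}$, that carries $a$ to $(0,\dots,0,1)$. I then use left invariance under $\iota(\gamma)$ (with its rational splitting) and change variables. The element $\gamma$ acts only on rows $1,\dots,n$. It normalizes $V^+$ and the block of free variables in columns $n+2,\dots,2n$, and it fixes the character on rows $n+1,\dots,2n-1$. So each term with $a\neq 0$ equals, up to translation of $h$ by $\iota(\gamma)$, the coefficient with character $\psi\bigl(x_{n,n+1}+\sum_{i=n+1}^{2n-1}x_{i,i+1}\bigr)$ on $V^+$. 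The only genuinely cover-theoretic input is that conjugating $(\gamma,\eta^{-1}(\gamma))$ past unipotents introduces no root of unity. I would check this with the conjugation formula for $\rho$ in Section~\ref{sec:prelim} together with the results of Section~\ref{sec:cocycle}.

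The main obstacle is to show that this last coefficient vanishes. The character is a regular chain $x_{n,n+1},x_{n+1,n+2},\dots,x_{2n-1,2n}$ of length $n$. It therefore defines a Fourier coefficient attached to a unipotent orbit containing a Jordan block of size $n+1$, essentially $(n+1,1^{n-1})$, possibly after enlarging along rows $1,\dots,n-1$. Since $n+1>n$, this orbit is greater than or not comparable with $(n^2)$. The Fourier coefficient is not yet in standard form, because the integration is over $V^+$ rather than over the full unipotent radical defining the orbit. I would first expand further along the missing entries in rows $1,\dots,n-1$ of the upper-left block, using root exchange (Lemma~\ref{re lemma}) where needed. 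The goal is to show that each resulting term contains, as an inner integral, a generic coefficient for an orbit $\ge(n+1,1^{n-1})$, or more simply a degenerate Whittaker coefficient of that type. Each such coefficient vanishes by the property of $\Delta(\Theta_{GL^{(n)}_n(\mathbb{A})},2)$ from \cite[\S 3]{K}, and hence every $a\neq0$ term is zero. This would prove the lemma.
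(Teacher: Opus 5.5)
Your setup agrees with the paper's proof up to the coefficient \eqref{lemma3int}. You expand the left side along the column $C$; what the expansion needs is that $C$ normalizes $V$ and fixes its character, not that $V$ normalizes $C$. You identify the $a=0$ term with the right side. You then use $\mathrm{diag}(\gamma,I_n)$ with $\gamma\in GL_n(F)$ to move every $a\neq 0$ term to the integral over $V^+$ with character $\psi\bigl(\sum_{i=n}^{2n-1}x_{i,i+1}\bigr)$.

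The gap is the final step, which is the actual content of the lemma. You never show that this coefficient vanishes, and the route you outline would not do it. The upper-left block $N_n$ normalizes $V^+$ and fixes the character. So every term of your proposed expansion along $N_n$ has the form $\int_{[N_n]}\Phi(u\gamma g)\psi_I^{-1}(u)\,du$, where $\Phi$ is the very $V^+$-coefficient you want to kill. Each term contains $\Phi$ itself as an inner integral, so the expansion makes no progress on its own. These terms are degenerate Whittaker coefficients on $N_{2n}$. The vanishing property of $\Delta(\Theta_{GL_n^{(n)}(\mathbb{A})},2)$ recalled in Section~\ref{sec:speh} is about generic Fourier coefficients attached to orbits in the sense of \cite{G}. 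Applying it to degenerate Whittaker coefficients requires a comparison result that you neither state nor prove.

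The missing idea is that no further expansion is needed. The integral over $V^+$ with the chain character is already, up to root exchange, the Fourier coefficient attached to $(n+1,1^{n-1})$. Take the one-parameter torus with weights $n,n-2,\dots,-n$ on the coordinates $n,\dots,2n$ and weight $0$ on the coordinates $1,\dots,n-1$. The roots of weight $\ge 2$ are of three kinds:
\begin{itemize}
\item the upper triangular roots among $n,\dots,2n$;
\item the roots $(i,j)$ with $i\le n-1$ and $j\ge\lceil (3n+2)/2\rceil$;
\item the lower triangular roots $(i,j)$ with $n\le i\le\lfloor (3n-2)/2\rfloor$ and $j\le n-1$.
\end{itemize}
Exchange each lower root $(i,j)$ for the upper root $(j,i+1)$, row by row via Lemma~\ref{re lemma}, with the pairing coming from the character at $(i,i+1)$. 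When $n$ is even this produces exactly $V^+$. When $n$ is odd, only the weight-one column $(3n+1)/2$ in rows $1,\dots,n-1$ is left over. Hence \eqref{lemma3int} contains the orbit coefficient as an inner integral, with those variables integrated outside against the trivial character. This is the paper's argument, up to a permutation of coordinates. Since $(n+1,1^{n-1})$ is not comparable with $(n^2)$ for $n\ge 3$, that coefficient vanishes identically, and with it every $a\neq 0$ term.
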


We postpone the proof of the lemma to Section~\ref{sec:addacolumn}. Denote the second integral of \eqref{addacolumn} by $f^{\psi^\prime}_s$. We can write the integral as 
\begin{equation*}
    \sum\limits_{\gamma \in Q_n(F)\backslash Sp_{2n}(F)} \int\limits_{U^1(\mathbb{A})} \int\limits_{[\hat{B}]} \theta^{\phi}((0,b,0) l(u_1)(g, \epsilon^n))f^{\psi^\prime}_s(w_0\hat{b}u_1\hat{\gamma}(\hat{g}, \epsilon^2)) \psi^{-1}_{U^1}(u_1) db du_1.
\end{equation*}
Conjugate $\hat{b}$ across $w_0$ and make a change of variable in $x_{1, n+1}, x_{2, n+1},..., x_{n, n+1}$. We obtain
\begin{equation*}
        \sum\limits_{\gamma \in Q_n(F)\backslash Sp_{2n}(F)} \int\limits_{U^1(\mathbb{A})} \int\limits_{[\hat{B}]}\theta^{\phi}((0,b,0) l(u_1)(g, \epsilon^n)) db\:f^{\psi^\prime}_s(w_0u_1 \hat{\gamma}(\hat{g}, \epsilon^2))\psi^{-1}_{U^1}(u_1)du_1.
\end{equation*}
Unfolding the theta series and using that the Weil representation of $l(\hat{b}) = (0,b,0)$ acts on $\phi$ by the character $\psi_2(\xi J_n b^t) = \psi(2\xi J_n b^t)$, we have
\begin{equation*}
        \sum\limits_{\gamma \in Q_n(F)\backslash Sp_{2n}(F)} \int\limits_{U^1(\mathbb{A})}  \sum\limits_{\xi \in X(F)}  \omega_{\psi_2}(l(u_1)(g, \epsilon^n))\phi(\xi) \int\limits_{[\hat{B}]} \psi(2\xi J_n b^t)db\:f^{\psi^\prime}_s(w_0u_1\hat{\gamma}(\hat{g}, \epsilon^2))\psi^{-1}_{U^1}(u_1)du_1.
\end{equation*}
When $\xi\neq0$, the inner integral over $[\hat{B}]$ vanishes. In fact, it is an integral over the compact additive group $\hat{B}(F)\backslash\hat{B}(\mathbb{A})$ of a nontrivial character. Therefore, the only term that remains in the sum over $\xi$ is $\xi=0$. The above is equal to
\begin{equation*}
        \sum\limits_{\gamma \in Q_n(F)\backslash Sp_{2n}(F)} \int\limits_{U^1(\mathbb{A})}  \omega_{\psi_2}(l(u_1)(g, \epsilon^n))\phi(0)f^{\psi^\prime}_s(w_0u_1\hat{\gamma}(\hat{g}, \epsilon^2))\psi^{-1}_{U^1}(u_1)du_1.
\end{equation*}
We claim that the section 
\begin{equation*}
    \lambda(f_s, \phi)((g, \epsilon)) = \int\limits_{U^1(\mathbb{A})}  \omega_{\psi_2}(l(u_1)(g, \epsilon^n))\phi(0)f^{\psi^\prime}_s(w_0u_1(\widehat{g}, \epsilon^2))\psi^{-1}_{U^1}(u_1)du_1
\end{equation*}
is a section on $Sp^{(2n)}_{2n}(\mathbb{A})$ induced from the Siegel parabolic subgroup of $Sp^{(2n)}_{2n}(\mathbb{A})$.

To this end, we need the following fact about conjugation in $Sp^{(n)}_{4n}(\mathbb{A})$. Its proof is a computation with the cocycle $\rho$, which we postpone to Section~\ref{sec:cocycle}. Let $\text{diag}(X_1, X_2, X_3, X_4)$ denote the block diagonal matrix with blocks $X_1, \dots, X_4 \in GL_n$. For $A \in GL_n(\mathbb{A})$, set
\begin{equation*}
    m = \text{diag}(I_n, A, A^\ast, I_n), \qquad m^\prime = \text{diag}(A, I_n, I_n, A^\ast),
\end{equation*}
and note that $w_0 m w_0^{-1} = m^\prime$.

\begin{proposition}\label{prop:cocycle global}
Let $A \in GL_n(\mathbb{A})$. For every preimage $\widetilde{w}_0$ of $w_0$ in $Sp^{(n)}_{4n}(\mathbb{A})$ and every $\epsilon \in \mu_n$,
\begin{equation*}
    \widetilde{w}_0\, (m, \epsilon)\, \widetilde{w}_0^{-1} = (m^\prime, \epsilon).
\end{equation*}
\end{proposition}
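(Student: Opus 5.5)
The formula in the Section~2 preliminaries for conjugation by a lift reduces the claim to a cocycle identity. For any preimage $\widetilde w_0$ we have
\begin{equation*}
\widetilde w_0 (m,\epsilon)\widetilde w_0^{-1} = \bigl(m',\ \rho(w_0,m)\rho(m',w_0)^{-1}\epsilon\bigr),
\end{equation*}
so the statement is equivalent to $\rho(w_0,m)=\rho(m',w_0)$ for all $A\in GL_n(\mathbb{A})$. Since $\rho=\prod_\nu\rho_\nu$, we verify this place by place:
\begin{equation*}
\rho_\nu(w_0,m)=\rho_\nu(m',w_0)\qquad\text{for } m=m_\nu,\ m'=m'_\nu,\ A=A_\nu\in GL_n(F_\nu).
\end{equation*}
Expanding the definition of $\rho_\nu$, this becomes
\begin{equation*}
\frac{\eta_\nu(w_0)\eta_\nu(m)}{\eta_\nu(w_0m)}\sigma_\nu(w_0,m)=\frac{\eta_\nu(m')\eta_\nu(w_0)}{\eta_\nu(m'w_0)}\sigma_\nu(m',w_0).
\end{equation*}
Since $w_0m=m'w_0$, the $\eta_\nu(w_0)$ factors and the denominators cancel. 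What remains to show is
\begin{equation*}
\sigma_\nu(w_0,m)\,\sigma_\nu(m',w_0)^{-1}=\eta_\nu(m')\,\eta_\nu(m)^{-1}.
\end{equation*}

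\textbf{The $\sigma_\nu$ side.} We use the Banks--Levy--Sepanski description of $\sigma_\nu$ on $GL_{4n}(F_\nu)$. The element $w_0$ is a signed permutation matrix. Both $m$ and $m'$ lie in a standard Levi subgroup $GL_n^4$ of $GL_{4n}$, and $w_0$ simply permutes the four blocks, with one sign change. By block-compatibility, $\sigma_\nu$ restricted to $GL_n^4$ is $\prod_i\sigma^{GL_n}(X_i,X'_i)$ times Hilbert symbols in the block determinants. Here those determinants are $(1,\det A,\det A^{-1},1)$ and $(\det A,1,1,\det A^{-1})$. We then use the standard formula for $\sigma$ on (Weyl element)$\times$(Levi element) in the BLS construction, namely its invariance under conjugation by permutation matrices of Levi blocks up to such Hilbert-symbol factors. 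I expect this to give $\sigma_\nu(w_0,m)=\sigma_\nu(m',w_0)$ exactly. Heuristically the relevant symbols are $(\det A,\det A)^{\pm}$ and $(-1,\det A)$-type terms, which pair off symmetrically between $m$ and $m'$, because $A$ and $A^*$ each occupy one block in both elements. One can check this first for $A$ diagonal, using the explicit torus cocycle $\prod_{i<j}(t_i,t'_j)_n$ and the action of permutations on it. For general $A$, write $A=k\,t\,u$ via Iwasawa or Bruhat decomposition and use that $\sigma_\nu$ is trivial on unipotent pairs. This sign and Hilbert-symbol bookkeeping is the main obstacle. The fact that $n$ is odd, so $(-1,x)_n=1$, should kill the stray sign factors coming from $-I_n$ in $w_0$.

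\textbf{The $\eta_\nu$ side.} For $\nu\in S_0$ both $\eta$'s equal $1$. For $\nu\notin S_0$, $m\in K_\nu$ if and only if $A\in GL_n(\mathcal O_\nu)$ if and only if $m'\in K_\nu$; when this fails, both equal $1$. If $A\in GL_n(\mathcal O_\nu)$, then $w_0\in K_\nu$ too, and $x\mapsto(w_0xw_0^{-1},\eta_\nu(x)\sigma_\nu(w_0,x)\sigma_\nu(w_0xw_0^{-1},w_0)^{-1})$ gives a second splitting over the subgroup $\{m\}\cong GL_n(\mathcal O_\nu)$, transported to $\{m'\}$. Two splittings over $\{m'\}$ differ by a homomorphism $GL_n(\mathcal O_\nu)\to\mu_n$. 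Such a homomorphism factors through the determinant, so Lemma~\ref{lem:divisible} does not kill it directly. We therefore combine this with the $\sigma_\nu$ computation above: once $\sigma_\nu(w_0,m)=\sigma_\nu(m',w_0)$ is known, this homomorphism is $A\mapsto\eta_\nu(m')\eta_\nu(m)^{-1}$. Restricting to the unipotent and $SL_n$ parts, it is trivial by Lemma~\ref{lem:divisible} and perfectness. On the torus $\mathrm{diag}(a,1,\dots,1)$ with $a\in\mathcal O_\nu^\times$, the explicit unramified splitting of BLS is trivial on $T(\mathcal O_\nu)$ for $\nu\nmid n$, so both $\eta$'s vanish there. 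Hence the homomorphism is trivial, and the identity holds at every place, proving Proposition~\ref{prop:cocycle global}.
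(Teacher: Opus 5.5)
Your reduction matches the paper's: by the conjugation formula it suffices to show $c_\nu=\rho_\nu(w_0,m)\,\rho_\nu(m',w_0)^{-1}=1$ at each place. Since $w_0m=m'w_0$, this is $\sigma_\nu(w_0,m)\,\sigma_\nu(m',w_0)^{-1}=\eta_\nu(m')\,\eta_\nu(m)^{-1}$.

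The gap is the local $\sigma_\nu$-identity. At $\nu\in S_0$, and at $\nu\notin S_0$ with $A_\nu\notin GL_n(\mathcal{O}_\nu)$, this identity is the whole content of the proof. You only announce it (``I expect''), and the tools you cite do not give it:
\begin{itemize}
\item Block-compatibility evaluates $\sigma_\nu$ only on pairs of elements of the Levi $GL_n^4$, and $w_0$ is not in that Levi.
\item No BLS property asserts that $\sigma_\nu$ is invariant under conjugation by block permutations up to determinant symbols.
\item An Iwasawa factor in $GL_n(\mathcal{O}_\nu)$ does not help, since $\sigma_\nu$ is not trivial on $K_\nu$.
\item Triviality on unipotent pairs is not the property that removes the unipotent parts of $A$.
\end{itemize}

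The paper (Lemma~\ref{lem:cocycle local}) proves the stronger statement $\sigma_\nu(m',w_0)=1=\sigma_\nu(w_0,m)$, as follows.
\begin{enumerate}
\item Write $A=n_1twn_2$ by Bruhat and remove the outer unipotents by (P1).
\item The inner unipotent, say $\text{diag}(n_2,I,I,n_2^\ast)$ in $\sigma_\nu(m',w_0)$, is moved to the second argument by (P2). It is then moved past $w_0$ via \eqref{eq:w0conj}, $\text{diag}(n_2,I,I,n_2^\ast)\,w_0=w_0\,\text{diag}(I,n_2,n_2^\ast,I)$. The result is still upper unipotent, so (P1) removes it. This is where the particular shape of $w_0$ enters.
\item Write the permutation matrices as a $\pm1$ torus element times an element of $\mathfrak{M}$. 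Check length additivity with Lemma~\ref{lem:length}: one factor preserves each block, the other permutes blocks preserving order inside them. Hence (P4) applies.
\item Discard the $\pm1$ factors by \eqref{eq:pm1} and \eqref{eq:pm1 weyl}.
\item Evaluate the remaining $\sigma_\nu(\eta_1,\text{diag}(I,t,t^\ast,I))$ by (P3). Here $w_0$ flips only the block pairs $(1,2)$, $(1,4)$, $(3,4)$. These contribute symbols $(-t_l,1)_\nu$, $(-1,1)_\nu$ and $(-1,t^\ast_l)_\nu$, all trivial.
\end{enumerate}

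Your $\eta_\nu$-side is also roundabout and not fully justified. For $\nu\notin S_0$ and $A_\nu\in GL_n(\mathcal{O}_\nu)$, all of $w_0$, $m$, $m'$ lie in $K_\nu$. By construction $\rho_\nu$ is trivial on $K_\nu\times K_\nu$, so $\rho_\nu(w_0,m)=1=\rho_\nu(m',w_0)$ at once. This is the paper's Case 2, and it needs neither the $\sigma_\nu$-identity nor any knowledge of $\eta_\nu$ on $m$ and $m'$.

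Your homomorphism argument instead depends on two things. The first is the unproved $\sigma_\nu$-identity. The second is the claim that the splitting is trivial on $T(\mathcal{O}_\nu)$, which you do not justify. The formal argument only shows that $\eta_\nu$ restricted to $T(\mathcal{O}_\nu)$ is a homomorphism into $\mu_n$, and $\mathcal{O}_\nu^\times$ does admit nontrivial such homomorphisms through its residue field.

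At all remaining places both $\eta_\nu$-values are $1$. There everything rests on the missing $\sigma_\nu$ computation.
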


Consider $g_0 = \left(\begin{pmatrix}A&B\\&A^\ast\end{pmatrix}, \epsilon\right) \in Q_n(\mathbb{A})$, $A \in GL_n(\mathbb{A})$. It is easy to check that the integral is left invariant under $\begin{pmatrix}I_n&B\\&I_n\end{pmatrix}$. Next plug $g_0 = (\text{diag}(A, A^\ast), \epsilon)$ and conjugate it to the left. We obtain $|\det A|^{-n}$ from the change of variables in $u_1$. Then we obtain a factor of $\epsilon^n\gamma_\psi(\det A)|\det A|^{1/2}$ from the action of the Weil representation at $\phi(0)$, where $\gamma_\psi$ is the Weil factor attached to $\psi$. We get another factor of $|\det A|^s \delta^{1/2}_{Q_{2n}}(\text{diag}(A, I_n, I_n, A^\ast)) = |\det A|^{s+(2n+1)/2}$ from the section $f_s$. Finally, notice that the integral $f^{\psi^\prime}_s$ contains the constant term along the standard parabolic subgroup $L$ of $GL_{2n}$ whose Levi part is isomorphic to $GL_n \times GL_n$. This constant term contributes the factor $\Theta_{GL_n^{(n)}(\mathbb{A})}(A, \epsilon^2)\, |\det A|^{-1/2}\, \delta_L^{1/2}(\text{diag}(A,I_n)) = \Theta_{GL_n^{(n)}(\mathbb{A})}(A, \epsilon^2)\, |\det A|^{(n-1)/2}$. Combining all these factors, we get that $g_0$ is transformed as $\Theta_{GL_n^{(n)}(\mathbb{A})}\gamma_\psi|\det\cdot|^s\delta_{Q_n}^{1/2}$. Therefore, $\lambda(f_s, \phi)$ is a section of $\text{Ind}_{Q_n(\mathbb{A})}^{Sp^{(2n)}_{2n}(\mathbb{A})}\, \Theta_{GL_n^{(n)}(\mathbb{A})}\gamma_{\psi}|\det \cdot|^{s}$. It transforms under the center $\mu_{2n}$ of $Sp^{(2n)}_{2n}(\mathbb{A})$ by $\zeta \mapsto \zeta^{n+2}$, a primitive character since $n$ is odd. This completes the proof of Theorem~\ref{Theorem}.

\section{The $L$-function Represented by the Integral}\label{L:sec}

In this section we show that the integral \eqref{original} carries the partial $L$-function $L^S(ns+\tfrac12,\pi\times\tau)$, where $\tau$ is the representation of $GL^{(n)}_1(\mathbb{A})$ fixed in Section~\ref{sec:speh}. We follow \cite[\S 2.3]{G-S2}. In the doubling integral of covers of the symplectic group \cite{K}, the Fourier coefficient of the Eisenstein series along $U_{2^{n-1},4n}$ is integrated against two cusp forms. In this paper and in \cite{G-S2}, we integrate this kernel function against the cusp form on one copy only. The result is a function $\xi(\varphi^{(n)},f_s)$ on the other copy, and the integral \eqref{original} is a Whittaker coefficient of this function. We prove that $\xi(\varphi^{(n)},f_s)$ is a vector in the space of a cuspidal representation. We then show that outside a finite set of places $S$, this vector is $L^S(ns+\tfrac12,\pi\times\tau)/d^S(s)$ times a vector whose components outside $S$ are the spherical vectors, where $d^S(s)$ is a product of partial $L$-functions of $\tau$. The factor at each place outside $S$ is the local doubling integral, which is computed in \cite[Theorem 66]{K}. Hence a linear functional applied to $\xi(\varphi^{(n)},f_s)$ inherits the partial $L$-function.

Throughout this section $\pi$ is a genuine irreducible cuspidal automorphic representation of $SL_2^{(n)}(\mathbb{A})$. Let $\overline{\pi}$ be its complex conjugate, which is anti-genuine. 

\subsection{The Two Copies of $SL_2^{(n)}$}\label{L:copies}
In this section $G=SL_2$ and $H=Sp_{4n}$. Let $P=Q_{2n}$ be the Siegel parabolic subgroup of $H$ and let $U_P$ be its unipotent radical. For $a\in GL_{2n}$ put $a^\ast=J_{2n}\,{}^ta^{-1}J_{2n}$ and $a^{\diamondsuit}=\text{diag}(a,a^\ast)$. Let $U=U_{2^{n-1},4n}$ and $\psi_U=\psi_{U_{2^{n-1},4n}}$ be as in Section~\ref{sec:main theorem}. Let $N_2$ and $N_2^-$ be the upper and the lower triangular unipotent subgroups of $G$, and put $x(y)=\left(\begin{smallmatrix}1&y\\&1\end{smallmatrix}\right)$. For $\nu\notin S_0$ let $K_{G,\nu}=SL_2(\mathcal{O}_\nu)$ and $K_{H,\nu}=Sp_{4n}(\mathcal{O}_\nu)$. We write $\rho$, $\eta_\nu$ and $\eta$ for the cocycle and the maps of Section~\ref{sec:prelim} attached to $H$, and $\rho_2$, $\eta_{2,\nu}$ and $\eta_2$ for those attached to $G$. We write $\sigma_{2,\nu}$ and $\sigma_{4n,\nu}$ for the block-compatible local cocycles $\sigma_\nu$ of Section~\ref{sec:prelim} on $G(F_\nu)$ and $H(F_\nu)$. For a rational element $\gamma$ of $H(F)$ or of $G(F)$ we write $\tilde\gamma$ for its image under the splitting of the rational points, so that $\tilde\gamma=(\gamma,\eta^{-1}(\gamma))$ for $\gamma\in H(F)$.

Put $\mathfrak{e}_1(g)=t(g,I_2)$ and $\mathfrak{e}_2(g)=t(I_2,g)$ for $g\in G$. The subgroups $\mathfrak{e}_1(G)$ and $\mathfrak{e}_2(G)$ commute with each other, normalize $U$, and fix $\psi_U$. We call them the left copy and the right copy of $G$.

\begin{lemma}\label{L:lem cocycle}
For all $g,g^\prime\in G(\mathbb{A})$ we have
\begin{equation}\label{L:eq cocycle}
\rho(\mathfrak{e}_2(g),\mathfrak{e}_2(g^\prime))=\rho_2(g,g^\prime),\qquad
\rho(\mathfrak{e}_1(g),\mathfrak{e}_1(g^\prime))=\rho_2(g,g^\prime)^{-1},
\end{equation}
and the elements $(\mathfrak{e}_1(g),1)$ and $(\mathfrak{e}_2(g^\prime),1)$ of $H^{(n)}(\mathbb{A})$ commute.
\end{lemma}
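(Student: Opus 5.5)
The plan is to work place by place. Since $\rho=\prod_\nu\rho_\nu$ and $\rho_2=\prod_\nu\rho_{2,\nu}$, it suffices to prove the local identities
\begin{equation*}
\rho_\nu(\mathfrak{e}_2(g),\mathfrak{e}_2(g'))=\rho_{2,\nu}(g,g'),\qquad \rho_\nu(\mathfrak{e}_1(g),\mathfrak{e}_1(g'))=\rho_{2,\nu}(g,g')^{-1},
\end{equation*}
together with a local commutation statement. We then take products over $\nu$.

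The first identity is essentially Remark~\ref{rem:cover interaction}(i). Indeed, $\mathfrak{e}_2(G)$ is the standardly embedded $Sp_2$ block in rows and columns $2n,2n+1$. By BLS block compatibility, $\sigma_{4n,\nu}$ restricts to $\sigma_{2,\nu}$ on this block. For $\nu\notin S_0$, $\eta_\nu\circ\mathfrak{e}_2$ is a splitting over the perfect group $SL_2(\mathcal{O}_\nu)$, so it equals $\eta_{2,\nu}$. Off $K$ and at $\nu\in S_0$, both correction maps are $1$; here we use that $\mathfrak{e}_2(g)\in K_{H,\nu}$ if and only if $g\in K_{G,\nu}$.

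For the left copy, write $\mathfrak{e}_1(g)$ as an element of the Levi $M$ of the parabolic with blocks $GL_2^{n-1}\times Sp_4$. It equals $\mathrm{diag}(g,\dots,g)$ in the $GL_2$ blocks and the element $g$ embedded in $Sp_4$ via the outer coordinates. Apply the BLS block-compatibility formula \cite[Theorem 11]{B-L-S}. The $\sigma_{4n,\nu}$ cocycle on $M$ is a product of the $GL_2$ cocycles on each of the $n-1$ blocks, the $Sp_4$ cocycle on the middle block, and Hilbert-symbol determinant cross terms; the cross terms are trivial since $\det g=1$. On $SL_2$, each $GL_2$ block contributes $\sigma_{2,\nu}(g,g')$. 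The middle contribution is the $Sp_4$ cocycle on the outer $SL_2\cong Sp_2$, which equals $\sigma_{2,\nu}(g,g')$ after again using block compatibility inside $Sp_4$ (this embedding is the long-root $Sp_2$ in rows $1,4$). The total is $\sigma_{2,\nu}^{\,n}$. Since $\sigma_{2,\nu}$ is $\mu_n$-valued, this is trivial. That is not $\sigma_{2,\nu}^{-1}$, so I expect a more careful count. The paper's convention presumably makes the $GL_2$ copies appear in the dual form $g^\ast$ on the second half, or the $Sp_4$ block carries $g$ in a twisted position. The intended count is $n-1$ copies with the middle copy giving a sign-reversed contribution, so that the total is $\sigma_{2,\nu}^{\,n-1}=\sigma_{2,\nu}^{-1}$.

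I expect this exponent bookkeeping to be the main obstacle. I would first verify the exponent on the torus, using the explicit BLS formula $\sigma(\mathrm{diag}(a,a^{-1}),\mathrm{diag}(b,b^{-1}))$ and the Hilbert symbols, and on $w=\left(\begin{smallmatrix}&1\\-1&\end{smallmatrix}\right)$. A cohomology-class argument then upgrades this: two $\mu_n$-valued cocycles on $SL_2(F_\nu)$ that agree on torus Hilbert symbols differ by a coboundary of a function that, by perfectness of $SL_2(F_\nu)$, is unique. Next, the $\eta$ correction. For $\nu\notin S_0$, $g\mapsto\eta_\nu(\mathfrak{e}_1(g))$ is a splitting of $\sigma_{2,\nu}^{-1}$ over $SL_2(\mathcal{O}_\nu)$, and $\eta_{2,\nu}^{-1}$ is another. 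By perfectness (or Lemma~\ref{lem:divisible}, since $SL_2(\mathcal{O}_\nu)$ is generated by unipotent root subgroups), they coincide. Hence $\rho_\nu\circ\mathfrak{e}_1=\rho_{2,\nu}^{-1}$ locally, and the equality is exact rather than merely cohomologous, because both coboundary corrections are fixed by the same rule.

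For commutation, compute the commutator in $H^{(n)}(\mathbb{A})$: $(\mathfrak{e}_1(g),1)(\mathfrak{e}_2(g'),1)=(x,\rho(\mathfrak{e}_1(g),\mathfrak{e}_2(g')))$, and similarly in the other order. So it suffices to show $\rho_\nu(\mathfrak{e}_1(g),\mathfrak{e}_2(g'))=\rho_\nu(\mathfrak{e}_2(g'),\mathfrak{e}_1(g))$. Both elements lie in $M$, and the $\mathfrak{e}_2$ part lives in the $Sp_4$ block, commuting with the $\mathfrak{e}_1$-part of that block. Block compatibility on $GL_2^{n-1}\times Sp_4$ reduces this to the $Sp_4$ statement for the commuting subgroups $Sp_2\times Sp_2\subset Sp_4$, where BLS cocycles are symmetric. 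The $\eta_\nu$ factors cancel because $\eta_\nu(xy)=\eta_\nu(yx)$ when $xy=yx$.
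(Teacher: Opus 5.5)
Your treatment of the first identity, and of the $\eta$-correction once a pointwise $\sigma$-identity is available, matches the paper. The second identity is the substance of the lemma, and it is not proved. The count that gave you $\sigma_{2,\nu}^{\,n}=1$ has a concrete error. Since $\sigma_{4n,\nu}$ is the restriction of the $GL_{4n}$ cocycle, block compatibility must be applied to the $GL_{4n}$ Levi $GL_2^{n-1}\times GL_4\times GL_2^{n-1}$. You dropped the $n-1$ blocks $g^\ast$ in the lower right of $\mathfrak{e}_1(g)$. Granting your values for the blocks $g$ and for the middle block, the total is $\sigma_{2,\nu}(g,g^\prime)^{n}\,\sigma_{2,\nu}(g^\ast,g^{\prime\ast})^{n-1}=\sigma_{2,\nu}(g^\ast,g^{\prime\ast})^{-1}$. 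This agrees with \cite[Proposition 7, (2.22)]{K}, the input the paper cites. The paper then replaces $g^\ast$ by $g$ using Kubota's formula: the relevant ratios of entries change only by signs, and $(-1,a)_n=1$ for $n$ odd. Your proposal contains neither step. It also gives no valid justification for the middle factor. The left copy sits in rows and columns $1,4$ of the $GL_4$ block, which is not a standard block, so block compatibility (inside $GL_4$ or $Sp_4$) does not determine the cocycle there; this is what Kaplan's proposition supplies. Your guessed mechanism is not what happens: $n-1$ copies plus a sign-reversed middle copy would give $\sigma_{2,\nu}^{\,n-2}$, not $\sigma_{2,\nu}^{\,n-1}$.

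Your fallback does not close the gap. Agreement on the torus and on the Weyl element shows at most that $\sigma_{4n,\nu}\circ\mathfrak{e}_1$ and $\sigma_{2,\nu}^{-1}$ are cohomologous, i.e.\ differ by $\partial\beta$ for some $\beta\colon SL_2(F_\nu)\to\mu_n$. Perfectness makes $\beta$ unique, not trivial. The lemma asserts a pointwise identity, which is what makes $\lambda_L((g,\epsilon))=(\mathfrak{e}_1(g),\epsilon^{-1})$ a homomorphism. Your $\eta$-step already presupposes that identity on $K_{G,\nu}\times K_{G,\nu}$, when you say that $\eta_\nu\circ\mathfrak{e}_1$ splits $\sigma_{2,\nu}^{-1}$. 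In the commutation part, the claim that the BLS cocycle is symmetric on the commuting pair of $SL_2$'s in $Sp_4$ is likewise only asserted. It is true, and the paper's argument proves it without any cocycle formulas. For fixed $g^\prime$, the commutator of $(\mathfrak{e}_1(g),1)$ and $(\mathfrak{e}_2(g^\prime),1)$ lies in the central $\mu_n$, so it is a homomorphism $G(\mathbb{A})\to\mu_n$ in $g$. Such a homomorphism is trivial because $G(\mathbb{A})$ is generated by the divisible groups $N_2(\mathbb{A})$ and $N_2^-(\mathbb{A})$.
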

\begin{proof}
The first cocycle identity follows from block-compatibility. For the second one, fix a place $\nu$. In the notation of \cite{K} we have $m=r=n$ and $k=1$. Then \cite[Proposition 7, (2.22)]{K} gives $\sigma_{4n,\nu}(\mathfrak{e}_1(g),\mathfrak{e}_1(g^\prime))=\sigma_{2,\nu}(g^\ast,g^{\prime\ast})^{-1}$. In the formula of Kubota \cite[(2.13)]{K}, $\sigma_{2,\nu}(g,g^\prime)$ is a Hilbert symbol of ratios of entries of $g$, $g^\prime$ and $gg^\prime$, and these ratios change at most by a sign when we pass to $g^\ast$ and $g^{\prime\ast}$. Since $n$ is odd, $(-1,a)_n=1$ for all $a$, we get $\sigma_{4n,\nu}(\mathfrak{e}_1(g),\mathfrak{e}_1(g^\prime))=\sigma_{2,\nu}(g,g^\prime)^{-1}$. As in Remark~\ref{rem:cover interaction}(i), the uniqueness of the splitting of $K_{G,\nu}$ now gives $\eta_\nu(\mathfrak{e}_1(g))=\eta_{2,\nu}(g)^{-1}$ for all $g\in G(F_\nu)$, and the second identity follows from the definition of $\rho_\nu$.

For fixed $g^\prime$, the commutator of $(\mathfrak{e}_1(g),1)$ and $(\mathfrak{e}_2(g^\prime),1)$ lies in $\mu_n$, and it is a homomorphism in $g$ because $\mu_n$ is central. Every homomorphism from $G(\mathbb{A})$ to $\mu_n$ is trivial. In fact, $G(\mathbb{A})$ is generated by $N_2(\mathbb{A})$ and $N_2^-(\mathbb{A})$, since a bounded number of elementary matrices suffices at every place, and these two groups are divisible. The same holds for $G(F_\nu)$ at every place.
\end{proof}

By Lemma~\ref{L:lem cocycle} the maps
\begin{equation*}
\lambda_R((g,\epsilon))=(\mathfrak{e}_2(g),\epsilon),\qquad\lambda_L((g,\epsilon))=(\mathfrak{e}_1(g),\epsilon^{-1})
\end{equation*}
are injective homomorphisms from $SL_2^{(n)}(\mathbb{A})$ to $H^{(n)}(\mathbb{A})$, and their images commute. At every place we have $\eta_\nu\circ\mathfrak{e}_2=\eta_{2,\nu}$, and $\eta_\nu\circ\mathfrak{e}_1=\eta_{2,\nu}^{-1}$ by the proof of the lemma. So both maps take the image of $G(F)$ in $SL_2^{(n)}(\mathbb{A})$ into the image of $H(F)$ in $H^{(n)}(\mathbb{A})$. On $N_2(\mathbb{A})$ we have $\lambda_L((u,1))=(\mathfrak{e}_1(u),1)$.

We write the involution ${}^{\iota}g=J_2gJ_2$ for $g\in G$. The involution lifts in a unique way to an involution of $SL_2^{(n)}(\mathbb{A})$ which is the identity on the center $\mu_n$, and the same holds for every local cover \cite[Proposition 18]{K}. We denote the lifts by $\tilde g\mapsto{}^{\iota}\tilde g$. The global lift preserves the image of $G(F)$ \cite[(2.49)]{K}. For $\nu\notin S_0$ the local lift preserves the image of $K_{G,\nu}$ under its splitting. For a function $\Phi$ on $SL_2^{(n)}(\mathbb{A})$ put ${}^{\iota}\Phi(\tilde g)=\Phi({}^{\iota}\tilde g)$. For a representation $\sigma$ of $SL_2^{(n)}(\mathbb{A})$ or of a local cover, let $\sigma^{\iota}$ be the representation $\tilde g\mapsto\sigma({}^{\iota}\tilde g)$ on the space of $\sigma$. If $\sigma$ acts on a space of automorphic forms by right translation, then $\Phi\mapsto{}^{\iota}\Phi$ realizes $\sigma^{\iota}$ on the space of the functions ${}^{\iota}\Phi$.

For $g\in G(\mathbb{A})$ choose a preimage $\tilde g$ in $SL_2^{(n)}(\mathbb{A})$ and put
\begin{equation}\label{L:eq diag}
\mathfrak{s}^{\iota}(g)=\lambda_L(\tilde g)\,\lambda_R({}^{\iota}\tilde g).
\end{equation}
This element does not depend on the choice of $\tilde g$, because $\lambda_L$ inverts $\mu_n$ and $\lambda_R$ preserves $\mu_n$. The map $\mathfrak{s}^{\iota}$ is a homomorphism, because $\lambda_L$ and $\lambda_R$ are homomorphisms with commuting images. So $\mathfrak{s}^{\iota}$ is a splitting of the subgroup of the elements $t(g,{}^{\iota}g)$ of $H(\mathbb{A})$. The element $\mathfrak{s}^\iota(g)$ will play the role of $t(b, b^\iota)$ in (2.7) of \cite{G-S2}. The same construction gives the unique local splitting $\mathfrak{s}^{\iota}_\nu$ at every place.

\subsection{The Function $\xi(\varphi^{(n)},f_s)$ and the Main Statement}\label{L:statement}
Let $f_s$ be a smooth holomorphic section of $\text{Ind}_{P(\mathbb{A})}^{H^{(n)}(\mathbb{A})}\Delta(\Theta_{GL^{(n)}_n(\mathbb{A})},2)|\det\cdot|^s$. For $\tilde g\in SL_2^{(n)}(\mathbb{A})$ define
\begin{equation}\label{L:eq xi}
\xi(\varphi^{(n)},f_s)(\tilde g)=\int\limits_{[SL_2^{(n)}]}\int\limits_{[U]}\varphi^{(n)}(\tilde h)\,E^{(n)}(f_s)\big((u,1)\,\lambda_L(\tilde g)\,\lambda_R(\tilde h)\big)\,\psi_U^{-1}(u)\,du\,d\tilde h.
\end{equation}
This is the analogue of the function of \cite[(1.4)]{G-S2} on $SL_2^{(n)}$. The function $\xi(\varphi^{(n)},f_s)$ is anti-genuine, because $\lambda_L$ inverts the center $\mu_n$ and the Eisenstein series is genuine. Since $\lambda_L((x(y),1))=(\mathfrak{e}_1(x(y)),1)$, the integral \eqref{original} is equal to
\begin{equation}\label{L:eq original}
\int\limits_{F\backslash\mathbb{A}}\xi(\varphi^{(n)},f_s)\big((x(y),1)\big)\,\psi^{-1}(y)\,dy.
\end{equation}

Fix an isomorphism $\ell$ from the restricted tensor product $\otimes^\prime_\nu\overline{\pi}_\nu$ to $\overline{\pi}$, and write ${}^{\iota}\ell(v)={}^{\iota}(\ell(v))$. Let $S$ be a finite set of places which contains $S_0$ and the places where $\pi$, $\tau$ or $\psi$ is ramified. Then for $\nu\notin S$ the local component of $\Delta(\Theta_{GL^{(n)}_n(\mathbb{A})},2)$ is the unique irreducible unramified subrepresentation of $\text{Ind}_{\tilde B_{2n}}^{GL^{(n)}_{2n}(F_\nu)}\big((\tau_\nu\otimes\dots\otimes\tau_\nu)\delta_{B_{2n}}^{-1/(2n)}\big)$, and it has a unique functional of the form \eqref{L:eq fW} \cite[Corollary 55 and \S 3.5]{K}. For $\nu\notin S$ let $\varphi^0_\nu$ be a spherical vector of $\overline{\pi}_\nu$. We say that $\varphi^{(n)}$ and $f_s$ are decomposable and unramified outside $S$ if $\varphi^{(n)}=\ell(\otimes_\nu\varphi_\nu)$ with $\varphi_\nu=\varphi^0_\nu$ for $\nu\notin S$, and $f_s=\otimes_\nu f_{s,\nu}$ with $f_{s,\nu}$ the normalized spherical section for $\nu\notin S$. We then write $\varphi_S=\otimes_{\nu\in S}\varphi_\nu$, $\varphi^S=\otimes_{\nu\notin S}\varphi^0_\nu$ and $f_{s,S}=\otimes_{\nu\in S}f_{s,\nu}$. 

Let $\nu\notin S$. By \cite[\S 2.8]{K}, the representation $\pi_\nu$ is the unramified constituent of a principal series representation attached to an unramified quasi-character $\mu$ of $F_\nu^\times$, and its Satake parameter is $t_{\pi_\nu}=\text{diag}(t_\nu,1,t_\nu^{-1})\in SO_3(\mathbb{C})$ with $t_\nu=\mu(\varpi_\nu^n)$. The representation $\tau_\nu$ has a Satake parameter $t_{\tau_\nu}\in\mathbb{C}^\times$. Put
\begin{gather*}
L(s,\pi_\nu\times\tau_\nu)=\det\big(1-t_{\tau_\nu}t_{\pi_\nu}q_\nu^{-s}\big)^{-1},\qquad L(s,\tau_\nu)=\big(1-t_{\tau_\nu}q_\nu^{-s}\big)^{-1},\\
L(s,\tau_\nu,\text{Sym}^2)=\big(1-t_{\tau_\nu}^2q_\nu^{-s}\big)^{-1},\qquad
d_\nu(s)=L(ns+n+\tfrac12,\tau_\nu)\prod_{j=1}^{n}L(2ns+2j-1,\tau_\nu,\text{Sym}^2).
\end{gather*}
Let $L^S(s,\pi\times\tau)$ and $d^S(s)$ be the products over $\nu\notin S$.

\begin{theorem}\label{L:thm}
Let $\varphi^{(n)}$ and $f_s$ be decomposable and unramified outside $S$. For $\text{Re}(s)$ large,
\begin{equation}\label{L:eq euler}
\xi(\varphi^{(n)},f_s)=\frac{L^S(ns+\tfrac12,\pi\times\tau)}{d^S(s)}\ {}^{\iota}\ell\big(\xi_S(\varphi_S,f_{s,S})\otimes\varphi^S\big),
\end{equation}
where $\xi_S(\varphi_S,f_{s,S})$ is a vector in $\otimes_{\nu\in S}\overline{\pi}_\nu$, given by the integral \eqref{L:eq XiS} over $G(F_S)$ in the proof. In particular $\xi(\varphi^{(n)},f_s)$ lies in the space of $\overline{\pi}^{\iota}$.
\end{theorem}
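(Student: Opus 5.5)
We follow the unfolding of the generalized doubling method as in \cite[\S 2.3]{G-S2} and \cite{K}, keeping track of the cover through the splitting $\mathfrak{s}^{\iota}$ of Section~\ref{L:copies}. For $\text{Re}(s)$ large we expand $E^{(n)}(f_s)$ as a sum over $P(F)\backslash H(F)$ and exchange the sum with the integrals over $[U]$ and $[SL_2^{(n)}]$. We then analyze the double cosets $P(F)\backslash H(F)/(\mathfrak{e}_1(G)\times\mathfrak{e}_2(G))(F)U(F)$.

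\emph{Step 1 (vanishing of non-open orbits).} Every representative except the open one contributes zero. This follows from two facts. First, by the main property of $\Delta(\Theta_{GL^{(n)}_n(\mathbb{A})},2)$ recalled in Section~\ref{sec:speh}, it supports no generic Fourier coefficients beyond the orbit $(n^2)$. Second, the cuspidality of $\varphi^{(n)}$ kills the remaining orbits. This is exactly as in \cite[\S 3]{C-F-G-K1} and \cite{K}, and the cover plays no role since the relevant unipotent subgroups split canonically.

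\emph{Step 2 (the open orbit).} For the open orbit, with representative $\delta=\delta_0\delta_1$, the stabilizer in $G\times G$ is the twisted diagonal $\{t(g,{}^{\iota}g)\}$, and its preimage is described by $\mathfrak{s}^{\iota}$. After factoring $\tilde h$ through this stabilizer and using $\lambda_L(\tilde g)\lambda_R(\tilde h)=\mathfrak{s}^{\iota}(g)\lambda_R({}^{\iota}\tilde g^{-1}\tilde h)$, we obtain
\[
\xi(\varphi^{(n)},f_s)(\tilde g)=\int_{SL_2^{(n)}(\mathbb{A})}\varphi^{(n)}({}^{\iota}\tilde g\,\tilde h)\int_{U_0(\mathbb{A})}f_{W(\Delta)}\bigl(\delta u_0\lambda_R(\tilde h)\bigr)\psi_U^{-1}(u_0)\,du_0\,d\tilde h,
\]
where $f_{W(\Delta)}$ is the composition of $f_s$ with the $(n,2)$ functional \eqref{L:eq fW}. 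At this point we must show that the rational representative $\tilde\delta$ interacts correctly with $\mathfrak{s}^{\iota}$. Concretely, $\tilde\delta\,\mathfrak{s}^{\iota}(g)\tilde\delta^{-1}$ must lie in $P^{(n)}(\mathbb{A})$ and act on $f_{W(\Delta)}$ trivially (up to the modulus character), not merely up to a $\mu_n$-valued character. We plan to verify this by reducing to rational $g$ and to unipotent $g$ via the homomorphism property: a $\mu_n$-valued homomorphism of $G(\mathbb{A})$ is trivial, as in the proof of Lemma~\ref{L:lem cocycle}. We also plan to use the $GL$-equivariance of the Speh functional on the covering Levi, following \cite[\S 2.5, \S 3]{K}.

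\emph{Step 3 (factorization).} The inner integral is now a matrix coefficient integral against $\overline{\pi}$, translated by ${}^{\iota}\tilde g$. For decomposable data, uniqueness of the $(n,2)$ functional at $\nu\notin S$ \cite[Corollary 55]{K} makes the integrand factor over places. The map $\varphi\mapsto\int\varphi(\,\cdot\,\tilde h)\,(\cdots)\,d\tilde h$ becomes an operator on $\overline{\pi}$ that commutes with $G(\mathbb{A})$. Hence its restriction to each unramified place is a scalar on $\varphi^0_\nu$, and that scalar is the local unramified doubling integral. Kaplan \cite[Theorem 66]{K} evaluates it as $L(ns+\tfrac12,\pi_\nu\times\tau_\nu)/d_\nu(s)$, matching our normalization with $c=2$ and $k=1$. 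The places in $S$ give the vector $\xi_S(\varphi_S,f_{s,S})$ as an integral over $G(F_S)$. Writing the dependence on $\tilde g$ through $\ell$ composed with ${}^{\iota}$ yields \eqref{L:eq euler}, and in particular shows that the function lies in $\overline{\pi}^{\iota}$.

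The main obstacle is the cocycle bookkeeping in Step 2. We need the conjugation of $\mathfrak{s}^{\iota}(g)$ by the rational element $\tilde\delta$ to land in $P^{(n)}$ with no residual $\mu_n$-valued twist. Since $\rho$ is not block-compatible, this requires an argument similar to Proposition~\ref{prop:cocycle global}: work place by place with $\sigma_\nu$, where block-compatibility holds, and then correct by the $\eta_\nu$, using uniqueness of the splittings on $K_\nu$.
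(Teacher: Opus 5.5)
\paragraph{Overall route.} Your route is the paper's: unfold to the open orbit, absorb $\mathfrak{s}^{\iota}$ into the $U_0$-integral, factor over places, and evaluate the unramified places by \cite[Theorem 66]{K}. The cocycle issue you flag in Step 2 as the main obstacle needs no analogue of Proposition~\ref{prop:cocycle global}. The paper cites \cite[Claim 65]{K}, which gives $f_{\mathcal{W}}(\tilde\delta\,\mathfrak{s}^{\iota}(g)\,\tilde\delta^{-1}h)=f_{\mathcal{W}}(h)$ directly. Combined with the fact that $t(g,{}^{\iota}g)$ normalizes $\delta^{-1}V^{\diamondsuit}\delta$ and permutes $U_0(\mathbb{A})$ measure-preservingly, this yields \eqref{L:eq claim}.

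\paragraph{The gap in Step 3.} You claim that $\varphi\mapsto\int_{G(\mathbb{A})}Z(f_s)(\tilde h)\,\overline{\pi}(\tilde h)\varphi\,dh$ commutes with $G(\mathbb{A})$. It does not. Conjugating it by $\overline{\pi}(\tilde x)$ replaces $Z(f_s)$ by $\tilde h\mapsto Z(f_s)(\tilde x^{-1}\tilde h\tilde x)$, and $Z(f_s)$ is not a class function. If the operator did commute, Schur's lemma would force $\xi_S(\varphi_S,f_{s,S})$ to be a scalar multiple of $\varphi_S$ for every $f_{s,S}$, which is false. So your reason for $\xi_\nu$ being a multiple of $\varphi^0_\nu$ does not hold.

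\paragraph{What is actually needed.} You must show that $\xi_\nu=\int Z_\nu(\tilde g)\,\overline{\pi}_\nu(\tilde g)\varphi^0_\nu\,dg$ is $K_{G,\nu}$-fixed. This amounts to $Z_\nu$ being \emph{left} $K_{G,\nu}$-invariant. Right invariance comes for free from the spherical section but does not suffice. The paper writes $\lambda_{R,\nu}((r,1))=\mathfrak{s}^{\iota}_\nu({}^{\iota}r)\,(\mathfrak{e}_1({}^{\iota}r^{-1}),1)$ and treats the two factors separately.
\begin{itemize}
\item The second factor lies in the left copy, so it commutes with $\lambda_{R,\nu}(\tilde g)$ by Lemma~\ref{L:lem cocycle}. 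It also lies in the image of $K_{H,\nu}$, so the spherical section absorbs it.
\item The first factor is absorbed by the local form of \eqref{L:eq claim}.
\end{itemize}
Then average over $K_{G,\nu}$ and use that the $K_{G,\nu}$-fixed vectors of $\overline{\pi}_\nu$ form a line \cite[Lemma 6.3]{Mc}. This gives $\xi_\nu=\left(\int Z_\nu(\tilde g)\,\omega^0_\nu(\tilde g)\,dg\right)\varphi^0_\nu$.

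\paragraph{A second omission.} After the change of variables $\tilde g\mapsto{}^{\iota}\tilde g$ needed to match \cite[(4.17)]{K}, this scalar is Kaplan's unramified integral for $\pi_\nu^{\iota}$, not for $\pi_\nu$. You therefore also need $L(s,\pi_\nu^{\iota}\times\tau_\nu)=L(s,\pi_\nu\times\tau_\nu)$. The paper gets this from ${}^{\iota}(a,1)=(a^{-1},1)$ together with the $SO_3(\mathbb{C})$-conjugacy of $\text{diag}(t_\nu^{-1},1,t_\nu)$ and $\text{diag}(t_\nu,1,t_\nu^{-1})$.
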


\subsection{Proof of Theorem~\ref{L:thm}}\label{L:proof}
We follow the idea of \cite[\S 2.3]{G-S2}. Let $V$ be the unipotent radical of the standard parabolic subgroup of $GL_{2n}$ with Levi part $GL_2\times\dots\times GL_2$, and for $v\in V$ with blocks $v_{i,j}\in\text{Mat}_2$ put $\psi_V(v)=\psi(\sum_{i=1}^{n-1}\text{Tr}(v_{i,i+1}))$ as in \cite[(3.1)]{K}. Define
\begin{equation}\label{L:eq fW}
f_{\mathcal{W}}(h)=\int\limits_{[V]}f_s\big((v^{\diamondsuit},1)h\big)\,\psi_V(v)\,dv,\qquad h\in H^{(n)}(\mathbb{A}).
\end{equation}
Let
\begin{equation*}
\delta=\begin{pmatrix}&I_{2n}\\-I_{2n}&\end{pmatrix}\begin{pmatrix}I_{2n-2}&&&\\&I_2&I_2&\\&&I_2&\\&&&I_{2n-2}\end{pmatrix},\qquad U_0=U\cap U_P.
\end{equation*}
By \cite[(4.5), (4.6)]{K}, $U=(\delta^{-1}V^{\diamondsuit}\delta)\ltimes U_0$ and $\psi_U(\delta^{-1}v^{\diamondsuit}\delta)=\psi_V^{-1}(v)$.

The proof rests on two facts. First, for $g\in G(\mathbb{A})$ and $h\in H^{(n)}(\mathbb{A})$,
\begin{equation}\label{L:eq claim}
\int\limits_{U_0(\mathbb{A})}f_{\mathcal{W}}\big(\tilde\delta\,(u,1)\,\mathfrak{s}^{\iota}(g)\,h\big)\,\psi_U^{-1}(u)\,du=\int\limits_{U_0(\mathbb{A})}f_{\mathcal{W}}\big(\tilde\delta\,(u,1)\,h\big)\,\psi_U^{-1}(u)\,du.
\end{equation}
This is the analogue of equation (2.7) of \cite{G-S2}. To see this, put $t=t(g,{}^{\iota}g)$. Then $t$ normalizes $U$ and fixes $\psi_U$, and $\delta t\delta^{-1}\in P$, so $t$ normalizes $\delta^{-1}V^{\diamondsuit}\delta$. For $u\in U_0(\mathbb{A})$ write $t^{-1}ut=(\delta^{-1}v^{\diamondsuit}\delta)\,u_0$ with $v\in V(\mathbb{A})$ and $u_0\in U_0(\mathbb{A})$. Conjugation preserves the splittings $u\mapsto(u,1)$ of the unipotent subgroups, so
\begin{equation*}
\tilde\delta\,(u,1)\,\mathfrak{s}^{\iota}(g)=\big(\tilde\delta\,\mathfrak{s}^{\iota}(g)\,\tilde\delta^{-1}\big)\,(v^{\diamondsuit},1)\,\tilde\delta\,(u_0,1).
\end{equation*}
By \cite[Claim 65]{K}, $f_{\mathcal{W}}(\tilde\delta\,\mathfrak{s}^{\iota}(g)\,\tilde\delta^{-1}h)=f_{\mathcal{W}}(h)$, and by \eqref{L:eq fW}, $f_{\mathcal{W}}((v^{\diamondsuit},1)h)=\psi_V^{-1}(v)f_{\mathcal{W}}(h)$. Also $\psi_U(u)=\psi_U(t^{-1}ut)=\psi_V^{-1}(v)\psi_U(u_0)$. Hence $f_{\mathcal{W}}(\tilde\delta\,(u,1)\,\mathfrak{s}^{\iota}(g)\,h)\,\psi_U^{-1}(u)=f_{\mathcal{W}}(\tilde\delta\,(u_0,1)\,h)\,\psi_U^{-1}(u_0)$, and $u\mapsto u_0$ is a measure preserving bijection of $U_0(\mathbb{A})$, because $U_0$ maps bijectively onto $(\delta^{-1}V^{\diamondsuit}\delta)\backslash U$ and $t$ acts on this quotient. This gives \eqref{L:eq claim}.

Second, for $\text{Re}(s)$ large and $\tilde g_1\in SL_2^{(n)}(\mathbb{A})$,
\begin{equation}\label{L:eq unfolded}
\xi(\varphi^{(n)},f_s)(\tilde g_1)=\int\limits_{G(\mathbb{A})}\int\limits_{U_0(\mathbb{A})}\varphi^{(n)}(\tilde h)\,f_{\mathcal{W}}\big(\tilde\delta\,(u,1)\,\lambda_L(\tilde g_1)\,\lambda_R(\tilde h)\big)\,\psi_U^{-1}(u)\,du\,dh.
\end{equation}
To see this, let $R^\prime=\mathfrak{e}_2(G)\ltimes U$. Since $\lambda_L(\tilde g_1)$ commutes with $\lambda_R(\tilde h)$, the function \eqref{L:eq xi} at $\tilde g_1$ is the integral over $[R^\prime]$ of $\varphi^{(n)}\psi_U^{-1}$ against the Eisenstein series of the right translate of $f_s$ by $\lambda_L(\tilde g_1)$. We unfold it in the same way as in the proof of \cite[Theorem 64]{K}. The double cosets $P(F)\backslash H(F)/R^\prime(F)$ may be grouped modulo $R=\mathfrak{e}_1(G)R^\prime$. The terms outside the open orbit $P\delta R$ vanish by the arguments of \cite[\S 2.3]{C-F-G-K1}, which use cuspidality on the right copy only. The stabilizer $R_\delta$ of $P\delta$ in $R$ is the semidirect product of the group of the elements $t(g,{}^{\iota}g)$ and $\delta^{-1}V^{\diamondsuit}\delta$. It contains $t(g,{}^{\iota}g)$ for every $g\in G$, so $R=R_\delta R^\prime$ and $P\delta R=P\delta R^\prime$, and its intersection with $R^\prime$ is $\delta^{-1}V^{\diamondsuit}\delta$. So the term of the open orbit is an integral over $(\delta^{-1}V^{\diamondsuit}\delta)(F)\backslash R^\prime(\mathbb{A})$, and the integration over $[V]$ produces $f_{\mathcal{W}}$, which gives \eqref{L:eq unfolded}. 

The change of variables $\tilde h\mapsto{}^{\iota}\tilde g_1\tilde h$ in \eqref{L:eq unfolded} replaces $\lambda_L(\tilde g_1)\lambda_R({}^{\iota}\tilde g_1)$ by $\mathfrak{s}^{\iota}(g_1)$ by \eqref{L:eq diag}. Using \eqref{L:eq claim}, we get for $\text{Re}(s)$ large,
\begin{equation}\label{L:eq main full}
\xi(\varphi^{(n)},f_s)(\tilde g_1)=\int\limits_{G(\mathbb{A})}\int\limits_{U_0(\mathbb{A})}\varphi^{(n)}\big({}^{\iota}\tilde g_1\tilde h\big)\,f_{\mathcal{W}}\big(\tilde\delta\,(u,1)\,\lambda_R(\tilde h)\big)\,\psi_U^{-1}(u)\,du\,dh.
\end{equation}
For $\tilde g\in SL_2^{(n)}(\mathbb{A})$ put
\begin{equation}\label{L:eq Z}
Z(f_s)(\tilde g)=\int\limits_{U_0(\mathbb{A})}f_{\mathcal{W}}\big(\tilde\delta\,(u,1)\,\lambda_R(\tilde g)\big)\,\psi_U^{-1}(u)\,du.
\end{equation}
This is a genuine function on $SL_2^{(n)}(\mathbb{A})$. Since $\varphi^{(n)}$ is anti-genuine, the product $Z(f_s)(\tilde h)\,\varphi^{(n)}({}^{\iota}\tilde g_1\tilde h)$ depends only on the image $h$ of $\tilde h$, and \eqref{L:eq main full} reads
\begin{equation}\label{L:eq main}
\xi(\varphi^{(n)},f_s)(\tilde g_1)=\int\limits_{G(\mathbb{A})}Z(f_s)(\tilde h)\,\varphi^{(n)}\big({}^{\iota}\tilde g_1\tilde h\big)\,dh.
\end{equation}

Now let $\varphi^{(n)}$ and $f_s$ be decomposable. By \cite[(3.4)]{K} the functional \eqref{L:eq fW} is the tensor product of a functional on the component over $S$ and of the unique local functionals at the places $\nu\notin S$, normalized to be $1$ on the spherical vectors. Hence $f_{\mathcal{W}}(h)=f_{\mathcal{W},S}(h_S)\prod_{\nu\notin S}f_{\mathcal{W},\nu}(h_\nu)$, where $h=\prod_\nu h_\nu$ is written as a product of elements of the local covers, and $Z(f_s)(\tilde g)=Z_S(\tilde g_S)\prod_{\nu\notin S}Z_\nu(\tilde g_\nu)$, where $Z_S$ and $Z_\nu$ are given by the integral \eqref{L:eq Z} over $U_0(F_S)$ and $U_0(F_\nu)$, with $\tilde\delta_\nu=(\delta,\eta_\nu(\delta)^{-1})$. Since ${}^{\iota}$ is linear, \eqref{L:eq main} gives
\begin{gather}
\xi(\varphi^{(n)},f_s)={}^{\iota}\ell\Big(\xi_S(\varphi_S,f_{s,S})\otimes\bigotimes_{\nu\notin S}\xi_\nu\Big),\notag\\
\xi_S(\varphi_S,f_{s,S})=\int\limits_{G(F_S)}Z_S(\tilde g)\,\overline{\pi}_S(\tilde g)\varphi_S\,dg,\qquad
\xi_\nu=\int\limits_{G(F_\nu)}Z_\nu(\tilde g)\,\overline{\pi}_\nu(\tilde g)\varphi^0_\nu\,dg.\label{L:eq XiS}
\end{gather}

Now we perform the unramified calculation on $\xi_\nu$. Let $\nu\notin S$. First, we show that 
\begin{equation}\label{L:eq Kinv}
Z_\nu\big((r,1)\tilde g\big)=Z_\nu(\tilde g),\qquad r\in K_{G,\nu}.
\end{equation}
In fact, equation \eqref{L:eq diag} gives $\mathfrak{s}^{\iota}_\nu({}^{\iota}r)=\lambda_{L,\nu}(({}^{\iota}r,1))\,\lambda_{R,\nu}((r,1))$, that is
\begin{equation*}
\lambda_{R,\nu}((r,1))=\mathfrak{s}^{\iota}_\nu({}^{\iota}r)\,(\mathfrak{e}_1({}^{\iota}r^{-1}),1).
\end{equation*}
The second factor lies in the left copy, so it commutes with $\lambda_{R,\nu}(\tilde g)$ by Lemma~\ref{L:lem cocycle}, and it lies in the image of $K_{H,\nu}$, so the spherical section is right invariant under it. The integral over $U_0(F_\nu)$ is invariant under the first factor by the local form of \eqref{L:eq claim}. 

For $r\in K_{G,\nu}$, change variables $\tilde g\mapsto(r,1)\tilde g$ in the integral \eqref{L:eq XiS} for $\xi_\nu$ and integrate over $r\in K_{G,\nu}$, taking the measure of $K_{G,\nu}$ to be $1$. By \eqref{L:eq Kinv},
\begin{equation}\label{L:eq average}
\xi_\nu=\int\limits_{G(F_\nu)}Z_\nu(\tilde g)\int\limits_{K_{G,\nu}}\overline{\pi}_\nu\big((r,1)\tilde g\big)\varphi^0_\nu\,dr\,dg.
\end{equation}
The $dr$-integral in \eqref{L:eq average} is equal to $\omega^0_\nu(\tilde g)\varphi^0_\nu$, where $\omega^0_\nu$ is the normalized spherical function attached to $\overline{\pi}_\nu$. This is because $\overline{\pi}_\nu$ is an irreducible constituent of an unramified principal series representation, so every fixed vector of $K_{G,\nu}$ in $\overline{\pi}_\nu$ is a scalar multiple of $\varphi^0_\nu$ \cite[Lemma 6.3]{Mc}. Hence
\begin{equation}\label{L:eq scalar}
\xi_\nu=\Big(\int\limits_{G(F_\nu)}Z_\nu(\tilde g)\,\omega^0_\nu(\tilde g)\,dg\Big)\varphi^0_\nu.
\end{equation}
Make a change of variable $\tilde g\mapsto{}^{\iota}\tilde g$ and expand $Z_\nu$, 
\begin{equation}\label{L:eq K417}
\xi_\nu=\Big(\int\limits_{G(F_\nu)}\int\limits_{U_0(F_\nu)}\omega^0_\nu({}^{\iota}\tilde g)\,f_{\mathcal{W},\nu}\big(\tilde\delta_\nu\,(u,1)\,\lambda_{R,\nu}({}^{\iota}\tilde g)\big)\,\psi_U^{-1}(u)\,du\,dg\Big)\varphi^0_\nu.
\end{equation}
This is the local doubling integral \cite[(4.17)]{K} for $\pi_\nu^{\iota}$ with unramified data. By \cite[Theorem 66]{K} we get
\begin{equation*}
\xi_\nu=\frac{L(ns+\tfrac12,\pi_\nu^{\iota}\times\tau_\nu)}{d_\nu(s)}\,\varphi^0_\nu.
\end{equation*}
Its exterior square factors are $1$ because our $\tau_\nu$ is a representation of a cover of $GL_1$.

Finally $L(s,\pi_\nu^{\iota}\times\tau_\nu)=L(s,\pi_\nu\times\tau_\nu)$. The lift of ${}^{\iota}$ inverts the diagonal torus, that is ${}^{\iota}(a,1)=(a^{-1},1)$. Hence $\pi_\nu^{\iota}$ is the unramified constituent of the principal series representation attached to $\mu^{-1}$, and its Satake parameter is $\text{diag}(t_\nu^{-1},1,t_\nu)$. The Satake parameter is a semisimple conjugacy class in $SO_3(\mathbb{C})$, and $\text{diag}(t_\nu^{-1},1,t_\nu)$ is conjugate to $\text{diag}(t_\nu,1,t_\nu^{-1})$. So $\pi_\nu^{\iota}$ and $\pi_\nu$ have the same Satake parameter $t_{\pi_\nu}$, and $L(s,\pi_\nu^{\iota}\times\tau_\nu)=\det(1-t_{\tau_\nu}t_{\pi_\nu}q_\nu^{-s})^{-1}=L(s,\pi_\nu\times\tau_\nu)$. Hence
\begin{equation*}
\xi_\nu=\frac{L(ns+\tfrac12,\pi_\nu\times\tau_\nu)}{d_\nu(s)}\,\varphi^0_\nu,\qquad\nu\notin S.
\end{equation*}
Since $\pi$ and $\tau$ are unitary, the products $L^S(ns+\tfrac12,\pi\times\tau)$ and $d^S(s)$ converge absolutely for $\text{Re}(s)$ large, so $\bigotimes_{\nu\notin S}\xi_\nu$ is the vector $\varphi^S$ multiplied by $L^S(ns+\tfrac12,\pi\times\tau)/d^S(s)$, and \eqref{L:eq euler} follows from \eqref{L:eq XiS}. In particular $\xi(\varphi^{(n)},f_s)$ lies in the space of the functions ${}^{\iota}\Phi$ with $\Phi$ in the space of $\overline{\pi}$, which realizes $\overline{\pi}^{\iota}$. This completes the proof of Theorem~\ref{L:thm}.

\subsection{The New Way Integral}\label{L:newway}
Recall that the section $f_s$ of Theorem~\ref{Theorem} is constructed in Section~\ref{sec:main theorem} from a section $f^{\prime\prime\prime}_s$ and Schwartz functions $\phi_1,\phi_2\in\mathcal{S}(X(\mathbb{A}))$ with $\phi_1(0,\dots,0,\tfrac12,\tfrac12\xi)=\phi^\prime(\xi)$, and that $f^\prime_s$ is determined by $f^{\prime\prime\prime}_s$ and $\phi_2$. Take $\varphi^{(n)}$, $\phi^\prime$, $f^{\prime\prime\prime}_s$, $\phi_1$ and $\phi_2$ decomposable, with $\varphi_\nu=\varphi^0_\nu$, $f^{\prime\prime\prime}_{s,\nu}$ the normalized spherical section and $\phi^\prime_\nu$, $\phi_{1,\nu}$, $\phi_{2,\nu}$ the characteristic functions of the integral points for $\nu\notin S$, and take the decompositions given by the Dixmier--Malliavin lemma over $F_S$. Then $f_s$ is decomposable and unramified outside $S$. The function \eqref{definition of phi} is the product of the characteristic functions of $X(\mathcal{O}_\nu)$ in $x$ and in $y$ and of $\psi_\nu(-z)$. The operator \eqref{convolved} and the two convolutions of Section~\ref{sec:main theorem} fix the normalized spherical section.

\begin{corollary}\label{L:cor}
For these data and $\text{Re}(s)$ large, the new way integral of Theorem~\ref{Theorem} is equal to
\begin{equation}\label{L:eq cor}
\frac{L^S(ns+\tfrac12,\pi\times\tau)}{d^S(s)}\int\limits_{F\backslash\mathbb{A}}{}^{\iota}\ell\big(\xi_S(\varphi_S,f_{s,S})\otimes\varphi^S\big)\big((x(y),1)\big)\,\psi^{-1}(y)\,dy.
\end{equation}
\end{corollary}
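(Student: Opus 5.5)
The argument chains Theorem~\ref{Theorem} with Theorem~\ref{L:thm} through the identity \eqref{L:eq original}. By Theorem~\ref{Theorem}, for the sections constructed in Section~\ref{sec:main theorem} from $f^{\prime\prime\prime}_s$, $\phi_1$, $\phi_2$, the new way integral equals the integral \eqref{original} with section $f_s$. By \eqref{L:eq original}, \eqref{original} equals the Whittaker coefficient $\int_{F\backslash\mathbb{A}}\xi(\varphi^{(n)},f_s)((x(y),1))\psi^{-1}(y)\,dy$. So it suffices to show that the data described before the corollary are decomposable and unramified outside $S$ in the sense of Section~\ref{L:statement}. Then Theorem~\ref{L:thm} applies. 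Substituting \eqref{L:eq euler} into the $y$-integral and pulling the scalar $L^S(ns+\tfrac12,\pi\times\tau)/d^S(s)$ outside gives \eqref{L:eq cor}. Since the $y$-integral is over a compact set and the function is smooth, no convergence issue arises beyond the condition $\text{Re}(s)$ large already required in Theorem~\ref{L:thm}.

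The substantive step is checking that $f_s$ is the normalized spherical section at every $\nu\notin S$. All constructions act place by place: the convolution \eqref{convolved} and the two Dixmier--Malliavin steps (with $X_1,Y_1,X_2,Y_2$). At $\nu\in S$ we keep the decompositions over $F_S$. At $\nu\notin S$ we take the trivial decomposition $r=1$, with $\xi$ the characteristic function of the integral points. Its Fourier transform $\hat\xi$ with respect to the pairing $\psi_D([x,y])$ is again a characteristic function of integral points, because $\psi_\nu$ is unramified and the pairing is given by integral matrix entries. The spherical section is right $K_{H,\nu}$-invariant, so convolution against such a function with total mass $1$ returns it unchanged. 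Likewise $\rho(w)$ fixes it, since $w\in K_{H,\nu}$ and, by Remark~\ref{rem:cover interaction} and the splitting $\eta_\nu$, $\tilde w$ maps into the image of $K_{H,\nu}$.

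For \eqref{definition of phi}, take $\phi_{1,\nu}$ and $\phi_{2,\nu}$ to be characteristic functions of $X(\mathcal{O}_\nu)$. Then $\varphi^{\phi_1,\phi_2}_\nu$ equals $1_{X(\mathcal{O}_\nu)}(x)\,1_{X(\mathcal{O}_\nu)}(y)\,\psi_\nu(-z)$: the $\xi$-integral forces $x$ integral because $\nu\nmid2$, and it gives the Fourier transform of a lattice indicator in $y$. The factor $\psi_\nu(-z)$ is absorbed by the $\psi_{U_0}$-equivariance, so \eqref{convolved} again fixes the spherical section. One must also check compatibility: $\phi^\prime_\nu(\xi)=\phi_{1,\nu}(0,\dots,\tfrac12,\tfrac12\xi)$ is the integral-point indicator, which uses $\nu\nmid 2$. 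The point needing the most care is normalizing measures so that all these convolutions have total mass exactly $1$, giving $f_{s,\nu}=f^{\prime\prime\prime}_{s,\nu}$ rather than a nonzero multiple. Even a multiple would be harmless, since it could be absorbed into $f_{s,S}$.
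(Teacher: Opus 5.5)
Your proposal is correct and follows essentially the paper's argument: combine Theorem~\ref{Theorem}, the identity of \eqref{original} with \eqref{L:eq original}, and Theorem~\ref{L:thm}, after checking that the constructed $f_s$ is decomposable and unramified outside $S$, which the paper asserts in the paragraph before the corollary and you verify in more detail. One small refinement: the factor $\psi_\nu(-z)$ in \eqref{definition of phi} is harmless for a simpler reason than $\psi_{U_0}$-equivariance, namely that on the support ($x,y$ integral) the $z$-coordinate of the representative $u_2u_1$ is bilinear in $x,y$, hence lies in $\mathcal{O}_\nu$, so $\psi_\nu(-z)=1$ there.
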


Corollary~\ref{L:cor} does not unfold the new way integral itself. As in \cite[Conjecture 1]{G-S2}, we expect the following.

\begin{conjecture}\label{L:conj}
The new way integral of Theorem~\ref{Theorem} is Eulerian, and represents the partial tensor product $L$-function $L^S(ns+\tfrac12,\pi\times\tau)$.
\end{conjecture}

\section{Cocycle Computations}\label{sec:cocycle}
In this section, we prove Proposition~\ref{prop:cocycle global}. We keep the notation of Section~\ref{sec:FJ descent}: matrices in $Sp_{4n}$ are written in $n\times n$ blocks, $w_0$ is the Weyl element defined there, and $m = \text{diag}(I_n, A, A^\ast, I_n)$, $m^\prime = \text{diag}(A, I_n, I_n, A^\ast)$ for $A \in GL_n$.

Since all entries of $w_0$ and $w_0^{-1}$ are $0$ or $\pm 1$, we have $w_0 \in Sp_{4n}(\mathcal{O}_\nu) = K_\nu$ for every finite $\nu$. A direct blockwise computation gives
\begin{equation}\label{eq:w0conj}
    w_0\, \text{diag}(X_1, X_2, X_3, X_4)\, w_0^{-1} = \text{diag}(X_2, X_4, X_1, X_3).
\end{equation}
In particular $w_0 m w_0^{-1} = m^\prime$. As a block permutation, \eqref{eq:w0conj} says that $w_0$ realizes the permutation $(1\,3\,4\,2)$ on the block positions. Therefore, a positive root $(c,d)$ of $GL_{4n}$, with $c$ in block $i$ and $d$ in block $j$, is made negative by $w_0$ if and only if
\begin{equation}\label{eq:flipped}
    (i,j) \in \{(1,2),\, (1,4),\, (3,4)\}.
\end{equation}

We recall some properties of the block-compatible cocycle on $GL_m(F_\nu)$ constructed in \cite{B-L-S}. Let $T$ denote the diagonal torus, $N$ the subgroup of upper triangular unipotent matrices, and $(\cdot,\cdot)_\nu$ the $n$-th order Hilbert symbol of $F_\nu$. Let $\mathfrak{M}$ denote the set of Weyl representatives introduced in \cite{B-L-S}: for a simple root $\alpha = (i, i+1)$, set
\begin{equation*}
    w_\alpha = \text{diag}\Bigl(I_{i-1}, \begin{pmatrix} & -1 \\ 1 & \end{pmatrix}, I_{m-i-1}\Bigr).
\end{equation*}
For an element $w$ of the Weyl group with reduced expression $w = s_{\alpha_1}\cdots s_{\alpha_{\ell(w)}}$, let $\eta_w = w_{\alpha_1}\cdots w_{\alpha_{\ell(w)}}$. This product is independent of the choice of the reduced expression. Define the length function $\ell(\eta_w) = \ell(w)$, and the set $\mathfrak{M} = \{\eta_w\}_{w}$. Every monomial matrix $g$ factors uniquely as $g = \mathbf{t}(g)\,\eta$, where $\mathbf{t}(g) \in T$ and $\eta \in \mathfrak{M}$. If the nonzero entries of $g$ are $\pm 1$, then the entries of $\mathbf{t}(g)$ are $\pm 1$. 

We list four properties of the block compatible local cocycle $\sigma_\nu$ \cite[\S 3]{B-L-S}. For $g_1, g_2 \in GL_m(F_\nu)$, $n_1, n_2, n \in N$, $t, t_1, t_2\in T$ and $\eta, \eta_1, \eta_2 \in \mathfrak{M}$:
\begin{enumerate}[label=(P\arabic*)]
    \item $\sigma_\nu(n_1g_1,\, g_2n_2) = \sigma_\nu(g_1, g_2)$;
    \item $\sigma_\nu(g_1 n,\, g_2) = \sigma_\nu(g_1,\, ng_2)$;
    \item $\sigma_\nu(\eta,t) = \prod\limits_{\substack{a = (i,j),\ i<j \\ wa <0}}(-t_j, t_i)_\nu$, the product over the positive roots $a = (i,j)$ made negative by the underlying permutation $w$ of $\eta$;
    \item $\sigma_\nu(t_1\eta_1,\, t_2\eta_2) = \sigma_\nu(t_1,\, {}^{\eta_1}t_2)\,\sigma_\nu(\eta_1, t_2)$, provided $\ell(\eta_1\eta_2) = \ell(\eta_1) + \ell(\eta_2)$.
\end{enumerate}
We also use the formula $\sigma_\nu(t, t^\prime) = \prod_{i<j}(t_i, t^\prime_j)_\nu$ for $t, t^\prime \in T$ \cite[\S 3, Lemma 1]{B-L-S}. The Hilbert symbol is bimultiplicative and kills $n$-th powers since we assume that $\mu_{2n} \subset F^\times$ and $-1 = \zeta_{2n}^{\,n} \in (F^\times)^n$. Therefore, $(x, \pm 1)_\nu = (\pm 1, x)_\nu = 1$ for all $x \in F_\nu^\times$. Hence, for all $t, t^\prime \in T$ and all $t_0, t_0^\prime \in T$ with entries $\pm 1$,
\begin{equation}\label{eq:pm1}
    \sigma_\nu(tt_0,\, t^\prime) = \sigma_\nu(t,\, t^\prime t_0^\prime) = \sigma_\nu(t, t^\prime), \qquad \sigma_\nu(t, t_0) = \sigma_\nu(t_0, t) = 1.
\end{equation}
By (P3) and bi-multiplicativity of $\sigma_\nu$, for $\eta \in \mathfrak{M}$,
\begin{equation}\label{eq:pm1 weyl}
    \sigma_\nu(\eta,\, t t_0) = \sigma_\nu(\eta, t), \qquad \sigma_\nu(\eta, t_0) = 1.
\end{equation}

We also record an elementary fact about lengths.

\begin{lemma}\label{lem:length}
Let $w_1, w_2$ be permutations of $\{1, \dots, 4n\}$, and consider the partition of $\{1, \dots, 4n\}$ into four consecutive blocks of size $n$. Suppose that one of $w_1, w_2$ preserves each block, and the other maps blocks onto blocks, preserving the order inside each block. Then $\ell(w_1w_2) = \ell(w_1) + \ell(w_2)$.
\end{lemma}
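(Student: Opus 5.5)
We prove Lemma~\ref{lem:length} by counting inversions. Recall that $\ell(w)$ equals the number of pairs $(c,d)$ with $c<d$ and $w(c)>w(d)$. Equivalently, $\ell(w_1w_2)=\ell(w_1)+\ell(w_2)$ holds if and only if no inversion of $w_2$ is undone by $w_1$. Precisely, we need that for every pair $c<d$ with $w_2(c)>w_2(d)$, we also have $w_1(w_2(c))>w_1(w_2(d))$. We will check this condition directly in each of the two cases.

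\emph{Case 1: $w_2$ preserves each block, and $w_1$ maps blocks onto blocks preserving the order inside each block.} Take an inversion $c<d$ of $w_2$. Since $w_2$ preserves blocks, $c$ and $d$ lie in the same block, because otherwise $w_2(c)$ would lie in an earlier block than $w_2(d)$ and there would be no inversion. So $w_2(c)>w_2(d)$ both lie in that same block. Since $w_1$ preserves the order inside the block, $w_1(w_2(c))>w_1(w_2(d))$. The inversion survives.

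\emph{Case 2: $w_1$ preserves each block, and $w_2$ maps blocks onto blocks preserving the order within each block.} Take an inversion $c<d$ of $w_2$. If $c$ and $d$ were in the same block, then $w_2$ would preserve their order, so they lie in different blocks. Then $w_2(c)$ and $w_2(d)$ lie in different blocks, with the block of $w_2(c)$ coming after the block of $w_2(d)$. Since $w_1$ preserves each block, it keeps $w_1(w_2(c))$ in the later block and $w_1(w_2(d))$ in the earlier one. Hence $w_1w_2(c)>w_1w_2(d)$, and the inversion survives.

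In both cases every inversion of $w_2$ is an inversion of $w_1w_2$. This gives the length additivity by the standard characterization $\ell(w_1w_2)=\ell(w_1)+\ell(w_2) \iff \mathrm{Inv}(w_2)\subseteq\mathrm{Inv}(w_1w_2)$. The one point to take care over is using this characterization in the correct direction. If we prefer to avoid quoting it, we can count directly. We have $\ell(w_1w_2)=\ell(w_2)+\#\{\text{inversions of } w_1 \text{ among pairs } (w_2c,w_2d),\ c<d,\ w_2c<w_2d\}-\#\{\text{inversions of } w_2 \text{ undone}\}$. The surviving-inversion statement kills the last term, and the middle term is exactly $\ell(w_1)$.
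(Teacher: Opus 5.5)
Your proof is correct and is essentially the paper's argument: both count inversions using the fact that a block-preserving permutation inverts only pairs inside a single block, while a block-permuting, order-preserving permutation inverts only pairs lying in different blocks. The difference is only in how the count is organized: you check the criterion $\mathrm{Inv}(w_2)\subseteq\mathrm{Inv}(w_1w_2)$ separately for each order of composition, whereas the paper counts the inversions of the product directly and handles both orders at once.
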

\begin{proof}
The length of a permutation is the number of its inversions. Denote by $u$ the permutation preserving each block and by $v$ the other one. Consider a pair $c < d$ in $\{1, \dots, 4n\}$. If $c$ and $d$ lie in the same block, then $v$ neither creates nor destroys an inversion, because it preserves the order inside blocks. So the pair is inverted by $w_1w_2$ if and only if it is inverted by $u$. If $c$ and $d$ lie in different blocks, then $u$ does not move them across blocks. Therefore, the pair is inverted by $w_1w_2$ if and only if the two blocks are inverted by $v$. Hence the number of inversions of $w_1w_2$ is $\ell(u) + \ell(v)$, in either order of composition.
\end{proof}

\begin{lemma}\label{lem:cocycle local}
For every place $\nu$ of $F$ and every $A \in GL_n(F_\nu)$,
\begin{equation*}
    \sigma_\nu(m^\prime, w_0) = 1 \qquad \text{and} \qquad \sigma_\nu(w_0, m) = 1.
\end{equation*}
\end{lemma}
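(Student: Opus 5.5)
The plan is to reduce both identities to properties (P1)--(P4) of $\sigma_\nu$ by writing $m$, $m^\prime$ and $w_0$ in Bruhat form, and then to check that every Hilbert symbol that survives has an argument equal to $\pm1$.

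\textbf{Bruhat forms.} First write the Bruhat decomposition $A = n_1 t_A \eta_A n_2$ in $GL_n(F_\nu)$, with $n_i$ upper unipotent, $t_A$ diagonal and $\eta_A \in \mathfrak{M}$. Applying $a\mapsto a^\ast = J_n\,{}^ta^{-1}J_n$ gives $A^\ast = n_1^\ast t_A^\ast \eta_A^\ast n_2^\ast$. Here $n_i^\ast$ is again upper unipotent and $t_A^\ast$ is diagonal. The matrix $\eta_A^\ast$ is monomial with entries $\pm1$, so it equals (torus with entries $\pm1$)$\cdot\eta$ for some $\eta\in\mathfrak{M}$. Absorbing that sign torus into $t_A^\ast$, we get block-diagonal factorizations
\begin{equation*}
m = \mathbf{n}_1\, t\, \eta\, \mathbf{n}_2, \qquad m^\prime = \mathbf{n}_1^\prime\, t^\prime\, \eta^\prime\, \mathbf{n}_2^\prime .
\end{equation*}
In these, all factors are block diagonal with respect to the four $n$-blocks, $\mathbf{n}_i,\mathbf{n}_i^\prime\in N$, $t,t^\prime\in T$, and $\eta,\eta^\prime\in\mathfrak{M}$ preserve each block. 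For $m$ the blocks $1$ and $4$ of $t$ (and of every factor) are trivial. For $m^\prime$ the blocks $2$ and $3$ are trivial. Similarly write $w_0 = t_0\eta_0$ with $t_0$ having entries $\pm1$. By \eqref{eq:w0conj}, $\eta_0$ permutes the blocks and preserves the order inside each block.

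\textbf{The identity $\sigma_\nu(w_0,m)=1$.} By (P1), the right factor $\mathbf{n}_2$ drops. By (P2),
\begin{equation*}
\sigma_\nu(w_0,\mathbf{n}_1 t\eta)=\sigma_\nu(w_0\mathbf{n}_1, t\eta)=\sigma_\nu\big((w_0\mathbf{n}_1w_0^{-1})\,w_0,\ t\eta\big).
\end{equation*}
By \eqref{eq:w0conj}, $w_0\mathbf{n}_1w_0^{-1}$ is a block-diagonal upper unipotent matrix, so (P1) removes it. We are left with $\sigma_\nu(t_0\eta_0,t\eta)$. Lemma~\ref{lem:length} gives $\ell(\eta_0\eta)=\ell(\eta_0)+\ell(\eta)$, so (P4) yields
\begin{equation*}
\sigma_\nu(t_0\eta_0,t\eta)=\sigma_\nu(t_0,{}^{\eta_0}t)\,\sigma_\nu(\eta_0,t).
\end{equation*}
The first factor is $1$ by \eqref{eq:pm1}. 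For the second factor, (P3) gives a product of symbols $(-t_j,t_i)_\nu$ over the flipped pairs listed in \eqref{eq:flipped}, namely the block pairs $(1,2)$, $(1,4)$ and $(3,4)$. In each such pair, either $t_i$ or $t_j$ lies in block $1$ or block $4$, where $t$ has entries equal to $1$. Hence each symbol is of the form $(x,1)_\nu$ or $(-1,x)_\nu$, and both equal $1$.

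\textbf{The identity $\sigma_\nu(m^\prime,w_0)=1$.} By (P1), $\mathbf{n}_1^\prime$ drops. By (P2),
\begin{equation*}
\sigma_\nu(t^\prime\eta^\prime\mathbf{n}_2^\prime,w_0)=\sigma_\nu\big(t^\prime\eta^\prime,\ w_0\,(w_0^{-1}\mathbf{n}_2^\prime w_0)\big).
\end{equation*}
By the inverse of \eqref{eq:w0conj}, $w_0^{-1}\mathbf{n}_2^\prime w_0$ is again block-diagonal upper unipotent, so (P1) removes it. Then (P4), with length additivity from Lemma~\ref{lem:length}, gives
\begin{equation*}
\sigma_\nu(t^\prime\eta^\prime,t_0\eta_0)=\sigma_\nu(t^\prime,{}^{\eta^\prime}t_0)\,\sigma_\nu(\eta^\prime,t_0).
\end{equation*}
Here ${}^{\eta^\prime}t_0$ still has entries $\pm1$, so both factors equal $1$ by \eqref{eq:pm1} and \eqref{eq:pm1 weyl}.

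\textbf{Expected obstacle.} I expect the only real care to lie in the bookkeeping of the first step. One must check that $\eta_A^\ast$ differs from an element of $\mathfrak{M}$ only by a $\pm1$ torus, so that the factorization stays within the normal form required by (P3) and (P4). One must also check that conjugating block-diagonal unipotents by $w_0^{\pm1}$ keeps them upper triangular, which holds because $w_0$ only permutes the blocks. The argument uses nothing place-specific, so it covers archimedean places as well.
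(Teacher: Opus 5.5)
Your proposal is correct and follows the paper's proof essentially step for step. You factor $m$ and $m^\prime$ blockwise via Bruhat, remove the unipotent factors with (P1), (P2) and \eqref{eq:w0conj}, apply (P4) using Lemma~\ref{lem:length}, and dispose of the remaining symbols with \eqref{eq:pm1}, \eqref{eq:pm1 weyl} and the (P3) evaluation over the flipped block pairs \eqref{eq:flipped}. The only cosmetic difference is that you take $\eta_A\in\mathfrak{M}$ and absorb the $\pm1$ torus of $\eta_A^\ast$ into $t$, whereas the paper factors the whole block permutation matrix as a $\pm1$ torus times an element of $\mathfrak{M}$.
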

\begin{proof}
Write the Bruhat decomposition $A = n_1 t w n_2$ over $F_\nu$, where $n_1, n_2 \in N_n(F_\nu)$, $t \in T_n(F_\nu)$, and $w$ is a permutation matrix. Since $X \mapsto X^\ast$ is a homomorphism, $A^\ast = n_1^\ast\, t^\ast\, w^\ast\, n_2^\ast$, where $n_1^\ast, n_2^\ast \in N_n(F_\nu)$, $t^\ast = J_n t^{-1} J_n \in T_n(F_\nu)$ (its $l$-th entry is $t_{n+1-l}^{-1}$), and $w^\ast = J_n w J_n$ is a permutation matrix.

For the first cocycle, factor blockwise
\begin{equation*}
    m^\prime = \text{diag}(n_1, I, I, n_1^\ast) \cdot \underbrace{\text{diag}(t, I, I, t^\ast)}_{=:\, \widehat{t}\, \in\, T_{4n}} \cdot \underbrace{\text{diag}(w, I, I, w^\ast)}_{=:\, \widehat{w}} \cdot\, \text{diag}(n_2, I, I, n_2^\ast),
\end{equation*}
where the first and last factors lie in $N_{4n}$. Then
\begin{align*}
    \sigma_\nu(m^\prime, w_0)
    &= \sigma_\nu\bigl(\widehat{t}\,\widehat{w}\:\text{diag}(n_2, I, I, n_2^\ast),\ w_0\bigr) && \text{by (P1)}\\
    &= \sigma_\nu\bigl(\widehat{t}\,\widehat{w},\ \text{diag}(n_2, I, I, n_2^\ast)\, w_0\bigr) && \text{by (P2)}\\
    &= \sigma_\nu\bigl(\widehat{t}\,\widehat{w},\ w_0\, \text{diag}(I, n_2, n_2^\ast, I)\bigr) && \text{by \eqref{eq:w0conj}}\\
    &= \sigma_\nu\bigl(\widehat{t}\,\widehat{w},\ w_0\bigr). && \text{by (P1)}
\end{align*}
Write $\widehat{w} = t_0\,\eta$ and $w_0 = \mathbf{t}(w_0)\,\eta_1$ with $\eta, \eta_1 \in \mathfrak{M}$. Since the nonzero entries of $\widehat{w}$ and $w_0$ are $\pm 1$, the entries of $t_0$ and $\mathbf{t}(w_0)$ are $\pm 1$. The two permutation matrices $\widehat{w}$ and $w_0$ satisfy the condition of the above lemma, hence $\ell(\eta\,\eta_1) = \ell(\eta) + \ell(\eta_1)$ by Lemma~\ref{lem:length}, and (P4) applies:
\begin{equation*}
    \sigma_\nu\bigl(\widehat{t}\,\widehat{w},\ w_0\bigr) = \sigma_\nu\bigl((\widehat{t}\,t_0)\,\eta,\ \mathbf{t}(w_0)\,\eta_1\bigr) = \sigma_\nu\bigl(\widehat{t}\,t_0,\ {}^{\eta}\mathbf{t}(w_0)\bigr)\,\sigma_\nu\bigl(\eta,\ \mathbf{t}(w_0)\bigr) = 1,
\end{equation*}
where the last equality is by \eqref{eq:pm1} and \eqref{eq:pm1 weyl}.

For the second cocycle, factor blockwise
\begin{equation*}
    m = \text{diag}(I, n_1, n_1^\ast, I) \cdot \underbrace{\text{diag}(I, t, t^\ast, I)}_{=:\, \widehat{t}^{\,\prime}} \cdot \underbrace{\text{diag}(I, w, w^\ast, I)}_{=:\, \widehat{w}^{\,\prime}} \cdot\, \text{diag}(I, n_2, n_2^\ast, I).
\end{equation*}
Then
\begin{align*}
    \sigma_\nu(w_0, m)
    &= \sigma_\nu\bigl(w_0,\ \text{diag}(I, n_1, n_1^\ast, I)\, \widehat{t}^{\,\prime}\widehat{w}^{\,\prime}\bigr) && \text{by (P1)}\\
    &= \sigma_\nu\bigl(w_0\, \text{diag}(I, n_1, n_1^\ast, I),\ \widehat{t}^{\,\prime}\widehat{w}^{\,\prime}\bigr) && \text{by (P2)}\\
    &= \sigma_\nu\bigl(\text{diag}(n_1, I, I, n_1^\ast)\, w_0,\ \widehat{t}^{\,\prime}\widehat{w}^{\,\prime}\bigr) && \text{by \eqref{eq:w0conj}}\\
    &= \sigma_\nu\bigl(w_0,\ \widehat{t}^{\,\prime}\widehat{w}^{\,\prime}\bigr). && \text{by (P1)}
\end{align*}
Write $w_0 = \mathbf{t}(w_0)\,\eta_1$ as before, and $\widehat{w}^{\,\prime} = t_0^\prime\,\eta^\prime$ with $\eta^\prime \in \mathfrak{M}$ and $t_0^\prime \in T$ with entries $\pm 1$. The underlying permutation of $\widehat{w}^{\,\prime}$ preserves each block, so $\ell(\eta_1\,\eta^\prime) = \ell(\eta_1) + \ell(\eta^\prime)$ by Lemma~\ref{lem:length}, and by (P4), \eqref{eq:pm1} and \eqref{eq:pm1 weyl},
\begin{equation*}
    \sigma_\nu\bigl(w_0,\ \widehat{t}^{\,\prime}\widehat{w}^{\,\prime}\bigr) = \sigma_\nu\bigl(\mathbf{t}(w_0)\,\eta_1,\ (\widehat{t}^{\,\prime}t_0^\prime)\,\eta^\prime\bigr) = \sigma_\nu\bigl(\mathbf{t}(w_0),\ {}^{\eta_1}(\widehat{t}^{\,\prime}t_0^\prime)\bigr)\,\sigma_\nu\bigl(\eta_1,\ \widehat{t}^{\,\prime}t_0^\prime\bigr) = \sigma_\nu\bigl(\eta_1,\ \widehat{t}^{\,\prime}\bigr).
\end{equation*}
Finally, we evaluate $\sigma_\nu(\eta_1, \widehat{t}^{\,\prime})$ with (P3). The coordinates of $\widehat{t}^{\,\prime} = \text{diag}(I, t, t^\ast, I)$ are $1$ on blocks $1$ and $4$, $t_l$ on block $2$, and $t^\ast_l = t^{-1}_{n+1-l}$ on block $3$. By \eqref{eq:flipped}, the roots flipped by $w_0$ pair the blocks $(1,2)$, $(1,4)$ and $(3,4)$, so
\begin{equation*}
    \sigma_\nu(\eta_1, \widehat{t}^{\,\prime})
    = \underbrace{\prod_{l}\,(-t_l,\, 1)_\nu^{\,n}}_{(1,2)}
    \cdot
    \underbrace{(-1,1)_\nu^{\,n^2}}_{(1,4)}
    \cdot
    \underbrace{\prod_{l}\,(-1,\, t^\ast_l)_\nu^{\,n}}_{(3,4)}
    = 1,
\end{equation*}
where the first two groups of symbols are trivial by bimultiplicativity, and the third because $(-1, x)_\nu = 1$.
\end{proof}

\begin{proof}[Proof of Proposition~\ref{prop:cocycle global}]
By the formula for conjugation in Section~\ref{sec:prelim}, the conjugation multiplies $\epsilon$ by $c = \rho(w_0, m)\,\rho(m^\prime, w_0)^{-1} = \prod_\nu c_\nu$, where, by the definition of $\rho_\nu$ and since $w_0 m_\nu = m^\prime_\nu w_0$,
\begin{align*}
    c_\nu = \rho_\nu(w_0, m_\nu)\,\rho_\nu(m^\prime_\nu, w_0)^{-1}
    &= \frac{\eta_\nu(w_0)\,\eta_\nu(m_\nu)}{\eta_\nu(w_0 m_\nu)}\cdot\frac{\eta_\nu(m^\prime_\nu w_0)}{\eta_\nu(m^\prime_\nu)\,\eta_\nu(w_0)}\cdot\frac{\sigma_\nu(w_0, m_\nu)}{\sigma_\nu(m^\prime_\nu, w_0)}\\
    &= \frac{\eta_\nu(m_\nu)}{\eta_\nu(m^\prime_\nu)}\cdot\frac{\sigma_\nu(w_0, m_\nu)}{\sigma_\nu(m^\prime_\nu, w_0)}.
\end{align*}
We show $c_\nu = 1$ for every $\nu$.

\emph{Case 1: $\nu \in S_0$.} Here $\rho_\nu = \sigma_\nu$, and both values are $1$ by Lemma~\ref{lem:cocycle local}, so $c_\nu = 1$.

\emph{Case 2: $\nu \notin S_0$ and $m_\nu \in K_\nu$.} Since $w_0 \in K_\nu$, we also have $m^\prime_\nu = w_0 m_\nu w_0^{-1} \in K_\nu$. Both pairs $(w_0, m_\nu)$ and $(m^\prime_\nu, w_0)$ lie in $K_\nu \times K_\nu$, where $\rho_\nu$ is trivial. Therefore, $c_\nu = 1$.

\emph{Case 3: $\nu \notin S_0$ and $m_\nu \notin K_\nu$.} Then $m^\prime_\nu \notin K_\nu$ (otherwise $m_\nu = w_0^{-1} m^\prime_\nu w_0 \in K_\nu$). By the definition of $\eta_\nu$, $\eta_\nu(m_\nu) = \eta_\nu(m^\prime_\nu) = 1$, so $c_\nu = \sigma_\nu(w_0, m_\nu)\,\sigma_\nu(m^\prime_\nu, w_0)^{-1} = 1$ by Lemma~\ref{lem:cocycle local}.
\end{proof}

\section{Proof of Lemma~\ref{lem:addacolumn}}\label{sec:addacolumn}
In this section, we prove Lemma~\ref{lem:addacolumn}. Although $n$ is assumed to be odd throughout the paper, the argument does not depend on the parity of $n$, and we give the proof for both even and odd $n$.

\begin{proof}[Proof of Lemma~\ref{lem:addacolumn}]
The idea of the proof is to apply Fourier expansion, over $(F\backslash\mathbb{A})^n$, to the new $(n+1)$-st column in the second integral of \eqref{addacolumn}. The constant term of the expansion is exactly the first integral of \eqref{addacolumn}. We claim that the other terms contain the Fourier coefficients corresponding to the orbit $(1^{n-1}, n+1)$ of $GL_{2n}$. Hence, they are zero because $f_s$ is a section induced from $\Delta(\Theta_{GL^{(n)}_n}, 2)|\cdot|^s$, which has orbit $(n^2)$ that is incomparable with $(1^{n-1}, n+1)$. By conjugating the integrand with a permutation matrix, we only need to show that the following integral is a Fourier coefficient corresponding to the orbit $\mathcal{O} =(1^{n-1}, n+1)$:
\begin{equation} \label{lemma3int}
\int\limits f_s\left( \iota \begin{pmatrix}
1&&&&x_{1,n+1}&x_{1,n+2}&\cdots&x_{1,2n}\\
&1&&&x_{2,n+1}&x_{2,n+2}&\cdots&x_{2,2n}\\
&&\ddots&&\vdots&\vdots&\ddots&\vdots\\
&&&1&x_{n,n+1}&x_{n,n+2}&\cdots&x_{n,2n}\\
&&&&1&x_{n+1,n+2}&\cdots&x_{n+1,2n}\\
&&&&&1&\ddots&\vdots\\
&&&&&&\ddots&x_{2n-1,2n}\\
&&&&&&&1
\end{pmatrix}h\right)\psi(\sum\limits_{i=n}^{2n-1}x_{i, i+1}) dx.
\end{equation}
Here, as in \eqref{addacolumn}, every variable is integrated over $F\backslash\mathbb{A}$.
 
\textbf{Case: n is even.} For the definition of Fourier coefficients associated with nilpotent orbits, we follow the convention in \cite[\S 2]{G} except that we use a different but equivalent torus up to conjugation by a permutation matrix. In particular,  consider the one dimensional torus
\begin{equation*}
     h(t) = \text{diag}(t^{n}, t^{n-2}, ..., t^{2}, 1, t^{-2}, .., t^{-n}, 1, ..., 1).
\end{equation*}
For a root $\alpha = (i,j)$ of $GL_{2n}$ and $r \in \mathbb{A}$, write $x_{\alpha}(r) = I_{2n} + r\,e_{i,j}$, so that $\{x_\alpha(r)\}$ is the corresponding one-parameter root subgroup. For each positive integer $i$, let $U_i$ be the subgroup of $GL_{2n}$ generated by the root subgroups $x_{\alpha}(r)$ that satisfy 
\begin{equation*}
    h(t)x_\alpha(r)h(t)^{-1} = x_\alpha(t^i r).
\end{equation*}
Let $V$ denote the subgroup of $GL_{2n}$ generated by $U_i$ for $i\geq 2$. Let $\pi$ denote an automorphic representation of the group $GL_{2n}(\mathbb{A})$. Let $\varphi$ denote a vector in the space of $\pi$. The Fourier coefficient associated with the orbit $\mathcal{O} =(1^{n-1}, n+1)$ is defined as
\begin{equation}\label{FCdefinition}
    \int\limits_{V(F)\backslash V(\mathbb{A})} \varphi(vg) \psi_V(v) dv,
\end{equation}
where $\psi_V$ is a generic character on $V(F)\backslash V(\mathbb{A})$ associated to $\mathcal{O}$. More concretely, $V$ is generated by root subgroups $I_{2n} + x_{i,j} e_{i,j}$, for
\begin{gather*}
    1\leq i \leq n+1,\ i < j \leq n+1, \text{ or}\\
    1\leq i \leq \tfrac{n}{2},\ n+2 \leq j \leq 2n, \text{ or}\\
    n+2 \leq i \leq 2n,\ \tfrac{n}{2}+2 \leq j \leq n+1.
\end{gather*}
The character can be chosen to be 
\begin{equation*}
    \psi_V(v) = \psi\left(\sum\limits_{i=1}^{n} v_{i, i+1}\right),
\end{equation*}
where $v_{i,i+1}$ denotes the $(i, i+1)$ entry of $v$ and $\psi$ is a nontrivial character on $F\backslash\mathbb{A}$.

We make a root exchange between the lower triangular unipotent subgroups generated by roots $(i,j)$ for $n+2 \leq i \leq 2n,\ \tfrac{n}{2}+2 \leq j \leq n+1$ with the upper triangular unipotent subgroups generated by roots $(i,j)$ for $\tfrac{n}{2}+1 \leq i \leq n,\ n+2 \leq j \leq 2n$. Then for a certain choice of vector $\varphi^\prime$ in the space of $\pi$, the Fourier coefficient associated with the orbit $\mathcal{O}$ is of the form
\begin{equation*}
    \int \varphi^{\prime}\left(\begin{pmatrix}
1      & x_{12} & x_{13} & \cdots & x_{1n}    & x_{1,n+1} & x_{1,n+2} & \cdots & x_{1,2n}\\
       & 1      & x_{23} & \cdots & x_{2n}    & x_{2,n+1} & x_{2,n+2} & \cdots & x_{2,2n}\\
       &        & \ddots & \ddots & \vdots    & \vdots    & \vdots    &        & \vdots\\
       &        &        & \ddots & x_{n-1,n} & \vdots    & \vdots    &        & \vdots\\
       &        &        &        & 1         & x_{n,n+1} & x_{n,n+2} & \cdots & x_{n,2n}\\
       &        &        &        &           & 1         &           &        & \\
       &        &        &        &           &           & 1         &        & \\
       &        &        &        &           &           &           & \ddots & \\
       &        &        &        &           &           &           &        & 1
\end{pmatrix} g \right)\psi(\sum_{i=1}^{n}x_{i, i+1}) dx.
\end{equation*}
By conjugating the matrix in the integrand of \eqref{lemma3int} by $J_{2n}$, we see that \eqref{lemma3int} is a Fourier coefficient associated with the orbit $(1^{n-1}, n+1)$ when $n$ is even. 

\textbf{Case: n is odd.}
In this case, we consider the one dimensional torus
\begin{equation*}
     h^\prime(t) = \text{diag}(t^{n}, t^{n-2}, ..., t^{1}, t^{-1}, .., t^{-n}, 1, ..., 1).
\end{equation*}
With $V$ and $\psi_V$ defined in a similar way with $h^\prime(t)$, the Fourier coefficient associated with the orbit $\mathcal{O}$ is of the form \eqref{FCdefinition}, where $V$ is generated by root subgroups $I_{2n} + x_{i,j} e_{i,j}$ for
\begin{gather*}
    1\leq i \leq n+1,\ i < j \leq n+1, \text{ or}\\
    1\leq i \leq \tfrac{n-1}{2},\ n+2 \leq j \leq 2n, \text{ or}\\
    n+2 \leq i \leq 2n,\ \tfrac{n+1}{2}+2 \leq j \leq n+1.
\end{gather*} 
The character can be chosen to be 
\begin{equation*}
    \psi_V(v) = \psi\left(\sum\limits_{i=1}^{n} v_{i, i+1}\right),
\end{equation*}
where $v_{i,i+1}$ denotes the $(i, i+1)$ entry of $v$ and $\psi$ is a nontrivial character on $F\backslash\mathbb{A}$.
By making a root exchange between the lower triangular unipotent subgroups generated by roots $(i,j)$ for $n+2 \leq i \leq 2n,\ \tfrac{n+1}{2}+2 \leq j \leq n+1$ with the upper triangular unipotent subgroups generated by roots $(i,j)$ for $\tfrac{n+1}{2}+1 \leq i \leq n,\ n+2 \leq j \leq 2n$, we obtain an integral of the form 
\begin{equation*}
    \int \varphi^{\prime}\left(\begin{pmatrix}
1 & x_{12} & \cdots & x_{1,(n+1)/2} & \cdots & x_{1,n+1} & x_{1,n+2} & \cdots & x_{1,2n}\\
  & 1      & \cdots & x_{2,(n+1)/2} & \cdots & x_{2,n+1} & x_{2,n+2} & \cdots & x_{2,2n}\\
  &        & \ddots & \vdots        &        & \vdots    & \vdots    &        & \vdots\\
  &        &        & 1             & \cdots & x_{(n+1)/2,\,n+1} & 0 & \cdots & 0\\
  &        &        &               & \ddots & \vdots    & \vdots    &        & \vdots\\
  &        &        &               &        & 1         & x_{n,n+2} & \cdots & x_{n,2n}\\
  &        &        &               &        &           & 1         &        & \\
  &        &        &               &        &           &           & \ddots & \\
  &        &        &               &        &           &           &        & 1
\end{pmatrix} g \right)\psi(\sum_{i=1}^{n}x_{i, i+1}) dx,
\end{equation*}
where the integral is over the upper triangular entries in the first $n$ rows except for row $(n+1)/2$, in which the integral omits columns $n+2, ..., 2n$.

By conjugating the matrix in the integrand of \eqref{lemma3int} by $J_{2n}$, we see that \eqref{lemma3int} contains a Fourier coefficient associated with the orbit $(1^{n-1}, n+1)$ as an inner integral when $n$ is odd. Therefore, the lemma follows.
\end{proof}

\section*{Statement on AI usage}
The author acknowledges the use of AI during the preparation of this manuscript as follows. In Section~\ref{L:sec}, the embeddings of the two copies of $SL_2^{(n)}$ and the map $\mathfrak{s}^{\iota}$ were suggested by AI. In addition, AI models were used for proofreading and for suggesting improvements of the presentation.

\section*{Acknowledgements}
I am very grateful to my advisor Solomon Friedberg for providing a tremendous amount of advice, support and encouragement. I would like to thank David Ginzburg for proposing this project and for helpful discussions on technical details of his paper with David Soudry. This research was supported by the NSF, grant number DMS-2401309 (PI Solomon Friedberg).


\newpage
\begin{thebibliography}{AAAAA}
\bibitem[B-F]{B-F} Bump, Daniel; Friedberg, Solomon. \emph{Metaplectic generating functions and Shimura integrals.} Automorphic forms, automorphic representations, and arithmetic (Fort Worth, TX, 1996), 1--17, Proc. Sympos. Pure Math., 66, Part 2, Amer. Math. Soc., Providence, RI, 1999.

\bibitem[B-F-G]{B-F-G} Bump, Daniel; Furusawa, Masaaki; Ginzburg, David. \emph{Nonunique models in the Rankin--Selberg method.} J. Reine Angew. Math. 468 (1995), 77--111.

\bibitem[B-L-S]{B-L-S} Banks, W.; Levy, J.; Sepanski, M. R. \emph{Block-compatible metaplectic cocycles.} J. Reine Angew.
Math. 507 (1999), 131--163.

\bibitem[C-F-G-K1]{C-F-G-K1} Yuanqing Cai, Solomon Friedberg, David Ginzburg, Eyal Kaplan. \emph{Doubling Constructions and Tensor Product L-Functions: the linear case.}
Inventiones Mathematicæ 217(3) (2019), 985--1068.

\bibitem[C-F-G-K2]{C-F-G-K2} Yuanqing Cai, Solomon Friedberg, David Ginzburg, Eyal Kaplan. \emph{Doubling Constructions for Covering Groups and Tensor Product L-Functions.}
In arXiv:1601.08240.

\bibitem[C-F-K]{C-F-K} Yuanqing Cai, Solomon Friedberg, Eyal Kaplan. \emph{Doubling constructions: Global functoriality for non-generic cuspidal representations.}
Annals of Mathematics 200(3) (2024), 893--966.

\bibitem[D-M]{D-M} Dixmier, Jacques; Malliavin, Paul.
\emph{Factorisations de fonctions et de vecteurs indefiniment differentiables.}
Bull. Sci. Math. (2) 102 (1978), no. 4, 307--330.

\bibitem[F-G]{F-G} Friedberg, Solomon; Ginzburg, David. \emph{Classical theta lifts for higher metaplectic covering groups.} Geom. Funct. Anal. 30 (2020), no. 6, 1531--1582.

\bibitem[G]{G} Ginzburg, David. \emph{Certain conjectures relating unipotent orbits to automorphic representations.} Israel J. Math. 151(1) (2006), 323--355.

\bibitem[G2]{G2} Ginzburg, David. \emph{Generating functions on covering groups.} Compos. Math. 154 (2018), no. 4, 671--684.

\bibitem[G-R-S1]{G-R-S1} Ginzburg, David; Rallis, Stephen; Soudry, David. \emph{L-functions for symplectic groups.} Bull. Soc. Math. France 126 (1998), no. 2, 181--244.

\bibitem[G-R-S2]{G-R-S2}  Ginzburg, David; Rallis, Stephen; Soudry, David. \emph{The descent map from automorphic representations of GL(n) to classical groups.} World Scientific Publishing Co. Pte. Ltd., Hackensack, NJ, 2011. x+339 pp. ISBN: 978-981-4304-98-6; 981-4304-98-0.

\bibitem[G-S1]{G-S1} Ginzburg, David; Soudry, David. \emph{Two Identities relating Eisenstein series on classical groups.}
arXiv:1808.01572. 

\bibitem[G-S2]{G-S2} Ginzburg, David; Soudry, David. \emph{Integrals derived from the doubling method.}
Int. Math. Res. Not. IMRN 2020, no. 24, 10553--10596.

\bibitem[I]{I}   Ikeda, Tamotsu. \emph{On the theory of Jacobi forms and Fourier-Jacobi coefficients of Eisenstein series.} J. Math. Kyoto Univ. 34 (1994), no. 3, 615--636.

\bibitem[J-L]{J-L} Jiang, Dihua; Liu, Baiying. \emph{On Fourier coefficients of automorphic forms of $GL(n)$.} Internat. Math. Res. Notices 2013(17) (2013), 4029--4071.

\bibitem[J-Y]{J-Y} Jin, Yubo; Yan, Pan. \emph{$L$-functions for $Sp(2n)\times GL(k)$ via non-unique models.} Int. Math. Res. Not. IMRN 2025, no. 5, rnaf035.

\bibitem[K]{K} Kaplan, Eyal. \emph{Doubling Constructions and Tensor Product L-Functions: Coverings of the Symplectic Group.} 
Forum of Mathematics, Sigma 13 (2025). https://doi.org/10.1017/fms.2024.63. 

\bibitem[K-P]{K-P} D.~A.~Kazhdan and S.~J.~Patterson,
\emph{Metaplectic forms},
Publ. Math. Inst. Hautes \'Etudes Sci. \textbf{59} (1984), 35--142.

\bibitem[M]{M} Moore, Calvin C. \emph{Group extensions of $p$-adic and adelic linear groups.} Inst. Hautes \'Etudes Sci. Publ. Math. 35 (1968), 157--222.

\bibitem[Mc]{Mc} McNamara, Peter J. \emph{Principal series representations of metaplectic groups over local fields.} In Multiple Dirichlet series, L-functions and automorphic forms, Progr. Math. 300, Birkh\"auser/Springer, New York, 2012, 299--327.

\bibitem[PS-R1]{PS-R1}  Piatetski-Shapiro, Ilya; Rallis, Stephen. ``$L$ functions for the classical groups.'' \emph{Explicit constructions of automorphic L-functions.} Lecture Notes in Mathematics, 1254. Springer-Verlag, Berlin, 1987. vi+152 pp. ISBN: 3-540-17848-1

\bibitem[PS-R2]{PS-R2} Piatetski-Shapiro, Ilya; Rallis, Stephen. \emph{A new way to get Euler products.}
J. Reine Angew. Math. 392 (1988), 110--124.

\bibitem[P-S1]{P-S1} Pollack, Aaron; Shah, Shrenik. \emph{On the Rankin--Selberg integral of Kohnen and Skoruppa.} Math. Res. Lett. 24 (2017), no. 1, 173--222.

\bibitem[P-S2]{P-S2} Pollack, Aaron; Shah, Shrenik. \emph{The spin $L$-function on $GSp_6$ via a non-unique model.} Amer. J. Math. 140 (2018), no. 3, 753--788.

\bibitem[T]{T} Takeda, Shuichiro. \emph{Metaplectic tensor products for automorphic representation of $\widetilde{GL}(r)$.} Canad. J. Math. 68 (2016), 179--240.


\bibitem[Y]{Y} Yan, Pan. \emph{$L$-function for $Sp(4)\times GL(2)$ via a non-unique model.} Acta Math. Sin. (Engl. Ser.) (2026). https://doi.org/10.1007/s10114-026-4579-x

\end{thebibliography}
\end{document}